\newtheorem{theorem}{Theorem}[section]
\newtheorem{proposition}[theorem]{Proposition}
\newtheorem{corollary}[theorem]{Corollary}
\newtheorem{lemma}[theorem]{Lemma}
\theoremstyle{definition}
\numberwithin{theorem}{section}
\numberwithin{definition}{section}
\renewcommand{\wp}{\upsilon}
\def\ecc{\frak e}
\def\scr{{s_{{\text{\rm cr}}}}}
\def\trnorm{|\!|\!|}
\def\Enear{{\mathcal E^{\text{\rm near}}} }
\def\Efar{{\mathcal E^{\text{\rm far}}} }
\def\Ennear{{\mathcal E_n^{\text{\rm near}}} }
\def\Enfar{{\mathcal E_n^{\text{\rm far}}} }
\def\Ellip{\text{{\rm Ell}}}
\def\Mod{\text{{\rm Mod}}}
\def\im{\imath}
\def\prodstar{{\prod_{i=1,2}}^{\!\!*}}
\def\prodstartext{{\prod^{*}}_{\!\!i=1,2}}
\def\intslash{\rlap{\kern  .32em $\mspace {.5mu}\backslash$ }\int}
\def\inn#1#2{\langle#1,#2\rangle}
\def\biginn#1#2{\big\langle#1,#2\big\rangle}
\newcommand{\ci}[1]{_{{}_{\!\scriptstyle{#1}}}}
\newcommand{\Be}{\begin{equation}}
\newcommand{\Ee}{\end{equation}}
\newcommand{\Bm}{\begin{multline}}
\newcommand{\Em}{\end{multline}}
\newcommand{\Bea}{\begin{eqnarray}}
\newcommand{\Eea}{\end{eqnarray}}
\newcommand{\Beas}{\begin{eqnarray*}}
\newcommand{\Eeas}{\end{eqnarray*}}
\newcommand{\Benu}{\begin{enumerate}}
\newcommand{\Eenu}{\end{enumerate}}
\newcommand{\Bi}{\begin{itemize}}
\newcommand{\Ei}{\end{itemize}}
\newcommand{\R}{\mathbb{R}}
\newcommand{\Z}{\mathbb{Z}}
\def\intslash{\rlap{\kern  .32em $\mspace {.5mu}\backslash$ }\int}
\def\qsl{{\rlap{\kern  .32em $\mspace {.5mu}\backslash$ }\int_{Q}}}
\def\Re{\operatorname{Re\,}}
\def\vth{\vartheta}
\def\R{\mathbb R}
\def\N{\mathbb N}
\def\sB{{\mathscr{B}}}
\def\emph#1{{\it #1 }}
\def\eg{{\it e.g. }}
\def\cf{{\it cf}}
\def\dist{{\text{\rm dist}}}
\def\supp{{\text{\rm supp}\,}}
\def\inn#1#2{\langle#1,#2\rangle}
\def\biginn#1#2{\big\langle#1,#2\big\rangle}
\def\noi{\noindent}
\def\card{\text{\rm card}}
\def\lc{\lesssim}
\def\gc{\gtrsim}
\def\eps{\varepsilon}
\def\la{\lambda}
\def\vphi{\varphi}
\def\om{\omega}
\def\fA{{\mathfrak {A}}}
\def\fB{{\mathfrak {B}}}
\def\fM{{\mathfrak {M}}}
\def\fQ{{\mathfrak {Q}}}
\def\fS{{\mathfrak {S}}}
\def\fV{{\mathfrak {V}}}
\def\fW{{\mathfrak {W}}}
\def\bbN{{\mathbb {N}}}
\def\bbR{{\mathbb {R}}}
\def\bbZ{{\mathbb {Z}}}
\def\cA{{\mathcal {A}}}
\def\cB{{\mathcal {B}}}
\def\cD{{\mathcal {D}}}
\def\cE{{\mathcal {E}}}
\def\cF{{\mathcal {F}}}
\def\cG{{\mathcal {G}}}
\def\cK{{\mathcal {K}}}
\def\cL{{\mathcal {L}}}
\def\cM{{\mathcal {M}}}
\def\cN{{\mathcal {N}}}
\def\cP{{\mathcal {P}}}
\def\cQ{{\mathcal {Q}}}
\def\cR{{\mathcal {R}}}
\def\cS{{\mathcal {S}}}
\def\cT{{\mathcal {T}}}
\def\cU{{\mathcal {U}}}
\def\cV{{\mathcal {V}}}
\def\cW{{\mathcal {W}}}
\def\Re{\operatorname{Re\,} }
\begin{document}

\newcommand{\NN}{\mathbb{N}}

\newcommand{\B}{\mathbb{B}}
\newcommand{\SSS}{\mathbb{S}}

\subjclass[2000]{42B15, 35B65}

\title[Improved  bounds for Stein's square functions]
{Improved  bounds for Stein's square functions}

\keywords{Square functions, Bochner-Riesz means, Wave and Schr\"odinger
semigroup, mixed norm estimates}
\thanks{Supported in part by NRF grant 2009-0072531 (Korea),
MICINN grant MTM2010-16518 (Spain) and NSF grant 0652890 (USA)}
\date{Revised version for PLMS, April 24, 2011}

\author[S. Lee \ \ \ \ K. Rogers \ \ \ \  A. Seeger]{Sanghyuk Lee \ \ \
Keith M. Rogers \ \ \ Andreas Seeger}

\address{Sanghyuk Lee\\ School of Mathematical Sciences,
Seoul National University, Seoul 151-742, Korea}
\email{shlee@math.snu.ac.kr}

\address{Keith Rogers \\
Instituto de Ciencias Matematicas
CSIC-UAM-UC3M-UCM \\ Madrid 28049, Spain} \email{keith.rogers@icmat.es}

\address{Andreas Seeger \\ Department of Mathematics 
\\ University of Wisconsin--Madison\\
480 Lincoln Drive\\
Madison, WI, 53706, USA} \email{seeger@math.wisc.edu}

\begin{abstract} We prove a weighted norm inequality for the maximal Bochner-Riesz operator and the associated square-function. This yields new
$L^p(\Bbb R^d)$ bounds on classes of radial Fourier multipliers
 for $p\ge 2+4/d$ with $d\ge 2$,
as well as space-time regularity results for the wave and
Schr\"odinger
equations.
 \end{abstract}

\maketitle

\section{Introduction}\label{intro}
Consider the  Bochner-Riesz means of order $\alpha$
defined for
Schwartz functions $f\in\cS(\bbR^d)$ by
$$ \cR^\alpha_t f(x)= \frac{1}{(2\pi)^{d}}\int_{|\xi|\le t}
\Big(1-\frac{|\xi|^2}{t^2}\Big)^\alpha\widehat f(\xi)\,e^{\im
\inn{x}{\xi}} d\xi,$$ where  $\widehat f (\xi)=\int f(y)\,e^{-\im
\inn{y}{\xi}}dy$.
In connection with questions regarding
almost everywhere summability,
and in analogy to classical Littlewood-Paley functions
for Poisson-integrals,
Stein~\cite{stein58} introduced  a  square function
defined by
$$G^\alpha\! f(x)=
\Big(\int_0^\infty \Big|\frac{\partial}{\partial t} \cR^\alpha_t f(x)
\Big|^2 t\,dt
\Big)^{1/2}.
$$
One is  interested in the inequality
$\|G^\alpha \!f\|_p\lc \|f\|_p$,
where
$A\lc B$
denotes $A\le C B$ with an unspecified constant
independent of $f$.
As $t\partial_t\widehat{\cR^\alpha_t f}(\xi)=2\alpha |\xi|^2/t^2
(1-|\xi|^2/t^2)^{\alpha-1}_+\widehat f(\xi)\,,$
one can
consider the $L^p$ problem  as a question regarding the boundedness of
a  vector valued singular integral operator involving Riesz means of order
 $\alpha-1$. It is known that $L^p$ boundedness for  $1<p\le 2$ holds
 if and only if  $\alpha>d(1/p-1/2)+1/2$ (see~\cite{su1},~\cite{igku}),
however
 the problem is more interesting in  the range $p>2$ for which
the condition $\alpha> \max\{1/2, d(1/2-1/p)\}$ is known to be necessary
and  conjectured to be  also sufficient.
For $d=1$ many proofs of the conjecture are known,
for one of them see \cite{su2}.  The conjecture in two dimensions was
proven by Carbery~\cite{ca},
 and partial results  for $p> \frac{2(d+1)}{d-1}$, $d\ge 3$,  are in~\cite{ch},
\cite{se0}. Here we improve on  the range in  dimensions $d\ge 3$.
\begin{theorem}
\label{steinsquarelp}
Let $d\ge 2$ and $p\in[\frac{2(d+2)}{d},\infty)$.
Then
\begin{equation*}\label{Lpsteinsquarefct}
\big\|G^\alpha \!f\|_p\lc \|f\|_p, \quad \text{ $\alpha>d\Big(\frac 12-\frac 1p\Big)$.}
\end{equation*}
\end{theorem}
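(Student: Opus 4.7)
The plan is to reduce $L^p$ bounds for $G^\alpha$ to a \emph{weighted} $L^2$ inequality of the form
\[
\int |G^\alpha f|^2\, w\, dx \;\lc\; C(\alpha,p)\int |f|^2\, \cM w\, dx,
\]
where $\cM$ is a positive maximal operator bounded on $L^{(p/2)'}(\R^d)$; duality and H\"older then upgrade the $L^2$ estimate to $\|G^\alpha f\|_p\lc\|f\|_p$. The endpoint $p_0=2(d+2)/d$ is precisely the Stein--Tomas restriction exponent, and $\cM$ should be built so that its $L^{(p_0/2)'}$-boundedness follows from the restriction estimate $\|\widehat{g\,d\sigma}\|_{L^{p_0}(\R^d)}\lc \|g\|_{L^2(S^{d-1})}$. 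This is the fundamental reason why the range improves over the earlier one based on $L^{2(d+1)/(d-1)}$ Carleson--Sj\"olin-type estimates.

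Using the identity displayed in the introduction for $t\partial_t \widehat{\cR^\alpha_t f}$, the operator $G^\alpha$ is an $L^2(dt/t)$-valued Fourier multiplier whose symbol concentrates near the cone $|\xi|=t$. I would decompose dyadically in distance to the cone, writing
\[
(1-|\xi|^2/t^2)_+^{\alpha-1}\;=\;\sum_{k\ge 0}2^{-k(\alpha-1)}a_k(|\xi|/t),
\]
with $a_k$ smooth and supported where $1-|\xi|/t\sim 2^{-k}$. Denote the corresponding square function by $G^\alpha_k$; by Plancherel one has $\|G^\alpha_k f\|_2\lc 2^{-k(\alpha-1/2)}\|f\|_2$. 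The task becomes: prove a weighted $L^2$ bound for each $G^\alpha_k$ with $k$-loss summable against the multiplier gain under the assumption $\alpha>d(1/2-1/p)$.

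For this single-scale estimate, the kernel of a frequency-localized piece of $G^\alpha_k$ (after rescaling so that $t\approx 1$) is a superposition of wave packets adapted to rectangular plates of dimensions $2^k\times 2^{k/2}\times\cdots\times 2^{k/2}$ tangent to the sphere of radius $t$. Combining $L^2$-orthogonality of these plate-localized operators (a $TT^*$ almost-orthogonality argument) with a geometric estimate on the integral of $w$ over such plates produces a weighted bound with $\cM$ dictated by the plate geometry, and the $L^{(p_0/2)'}$-boundedness of $\cM$ is then precisely what Stein--Tomas gives by duality. The $t$-integration is handled by a parallel dyadic decomposition in $t$, kept compatible with the plate decomposition so that orthogonality in $\xi$ translates into $L^2(dt/t)$-orthogonality.

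The principal obstacle is making the single-piece bound sharp in $k$, i.e.\ obtaining loss at most $2^{k(\alpha-d(1/2-1/p))}$ after the $t$-integration: Plancherel alone gives only the trivial $L^2$ bound and Littlewood--Paley-type reassembly can introduce logarithmic losses that would destroy the endpoint. Overcoming this requires an almost-orthogonal decomposition in $t$ matched precisely to the plate geometry, together with the sharp $L^2\to L^{p_0}$ Stein--Tomas inequality (which is exactly the ingredient that enlarges the range beyond earlier work). Once the single-scale weighted bound is established with the sharp $k$-dependence, summation over $k$ and the $L^{(p/2)'}$-boundedness of $\cM$ yield the theorem in the full range $p\ge 2(d+2)/d$ and $\alpha>d(1/2-1/p)$.
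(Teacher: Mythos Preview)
Your overall architecture---dyadic decomposition in distance to the sphere, a weighted $L^2$ inequality with a maximal-type weight operator bounded on $L^{(p/2)'}$, and duality---matches the paper. But the proposal contains a decisive error that would make it fail to reach the stated range.

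You write that ``the endpoint $p_0=2(d+2)/d$ is precisely the Stein--Tomas restriction exponent.'' It is not. The Stein--Tomas exponent for $S^{d-1}\subset\R^d$ is $\frac{2(d+1)}{d-1}$, which is strictly larger than $\frac{2(d+2)}{d}$ for every $d\ge 2$. A linear $TT^*$/plate almost-orthogonality argument built on Stein--Tomas is exactly what Christ and the third author used to obtain the earlier range $p>\frac{2(d+1)}{d-1}$ that the present theorem improves upon; your scheme, carried out correctly, would reproduce that older result and no more. The exponent $\frac{2(d+2)}{d}$ is the \emph{bilinear} adjoint restriction exponent (Tao), and that bilinear estimate is the genuinely new input here.

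Concretely, the paper does not bound $\int|S^\delta_t f|^2\,w$ by a linear plate decomposition. Instead it writes $|S^\delta_t f|^2$ as a product $S^\delta_t f\cdot\overline{S^\delta_t f}$, decomposes the pair on the Fourier side according to the angular separation $2^{-j}$ of the two frequency supports, and applies Tao's bilinear $L^2\times L^2\to L^{p/2}$ estimate (after a parabolic rescaling sending the spherical piece to a small perturbation of a paraboloid) to the separated pieces. The diagonal piece $j=j_\circ$ with separation $\approx\sqrt\delta$ is handled by elementary kernel estimates and the Nikodym maximal function; the off-diagonal pieces contribute the main term of the weight operator, whose $L^{(p/2)'}$-boundedness is proved directly and does not require any Kakeya/Nikodym input beyond trivial bounds. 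The bilinear structure is what buys the extra factor needed to push down to $p\ge\frac{2(d+2)}{d}$; without it your single-scale bound will be off by exactly the amount corresponding to the gap between the linear and bilinear restriction exponents.
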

As in the related work by the first author~\cite{le}
our  main tool  will be Tao's bilinear estimate~\cite{ta}
 for the adjoint of the Fourier restriction operator. The   square function
 result
implies  the currently known sharp $L^p$
estimates for the
maximal Bochner-Riesz operator
obtained by Carbery  ~\cite{ca} in two dimension  and by the first author ~\cite{le} in higher dimensions; however,
as pointed out in~\cite{le}, somewhat weaker  estimates are
already enough to bound  the maximal  function. More precisely,
for a   compact $t$-interval $I\subset  (0,\infty)$, the  estimate in
\cite{le} could be formulated as a  variational $L^p(V_4(I))$ inequality for Riesz means of order
 $\la> \la_{\text{crit}}:=d(1/2-1/p)-1/2$,
or a slightly better
 regularity  result involving the Sobolev space $L^4_{1/4+\eps}(I)$.

The  $L^p$-estimate  for the square function
 is  significant for various reasons.
Firstly,
it yields  regularity results for wave and Schr\"odinger operators
which  will be discussed  below. Secondly,
  for compact $I\subset (0,\infty)$
it implies   $L^p(V_2(I))$ or
$L^p(L^2_{1/2+\eps}(I))$ results for  Bochner-Riesz
means of order $\la>\la_{\text{crit}}$.
Thirdly, the $L^p$-result for $G^\alpha$ implies
 boundedness results  for maximal operators associated with
 more general classes of radial Fourier multipliers as in
\cite{caesc},~\cite{dt}, and  finally,
 an inequality by Carbery, Gasper and Trebels
\cite{cgt} relating radial multipliers and  $G^\alpha$
 yields the following  sharp $L^p\to L^p$ boundedness
result of H\"ormander-Mikhlin type.
\begin{corollary}\label{Lpmultiplierresults}
Let $d\ge 2$ and
$p\in(1, \frac{2(d+2)}{d+4}]\cup [\frac{2(d+2)}{d},\infty)$.
%
Let $\varphi$ be a
nontrivial $C^\infty$ function compactly supported in
$(0,\infty)$. Then
$$
\sup_{f\in\cS:\, \|f\|_p\le 1}
 \big\|\cF^{-1}[ m(|\,\cdot\,|) \widehat f \,]\big\|_p
  \lc\, \sup_{t>0} \|\varphi\,
 m(t\,\cdot\,)\|_{L^2_{\alpha}(\bbR)},\quad \alpha> d\Big|\frac 1p-\frac 12\Big|.
$$

\end{corollary}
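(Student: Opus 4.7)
The starting point, cited explicitly in the statement of the corollary, is an inequality of Carbery, Gasper and Trebels \cite{cgt} that dominates a radial Fourier multiplier operator by Stein's square function. Roughly, for $m$ supported in a fixed compact subinterval of $(0,\infty)$ and for any $\alpha>0$, one has
\[
\|T_m f\|_p \;\lesssim\; \|m\|_{L^2_\alpha(\bbR)} \cdot \|G^\alpha\|_{L^p\to L^p} \cdot \|f\|_p,
\]
where $T_m f = \cF^{-1}[m(|\cdot|)\widehat f\,]$. Heuristically this is obtained by writing $m$ as a superposition of Riesz kernels $(1-s^2/t^2)_+^\alpha$ in $t$, pairing against the formula $t\partial_t \widehat{\cR^\alpha_t f}$, and applying Cauchy--Schwarz in the $t$-variable. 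My plan is to reduce the corollary to this inequality together with Theorem~\ref{steinsquarelp}.

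\emph{Step 1 (Dyadic reduction to the scale-invariant norm).} Decompose $m = \sum_{k\in\ZZ} m_k$ with $m_k(s)=m(s)\eta(2^{-k}s)$ for a fixed $\eta\in C_c^\infty$ supported in $[1/2,2]$ giving a partition of unity on $(0,\infty)$. Then $m_k(|\xi|)$ is supported in the annulus $|\xi|\sim 2^k$, so $T_{m_k} f$ has Fourier support in the same annulus and may be written as $T_{m_k} P_k f$, where $P_k$ is a standard Littlewood--Paley projector adapted to $|\xi|\sim 2^k$. Littlewood--Paley theory gives
\[
\|T_m f\|_p \;\lesssim\; \Big\|\Big(\sum_k |T_{m_k} P_k f|^2\Big)^{1/2}\Big\|_p.
\]
By scaling, $\|m_k\|_{L^2_\alpha}=\|\eta\,m(2^k\cdot)\|_{L^2_\alpha}\lesssim \sup_{t>0}\|\varphi m(t\cdot)\|_{L^2_\alpha}$ uniformly in $k$, so the CGT-type inequality applied dyadically (in its vector-valued form, combined with the $L^p$-boundedness of the Littlewood--Paley square function associated to $\{P_k\}$) yields
\[
\Big\|\Big(\sum_k |T_{m_k} P_k f|^2\Big)^{1/2}\Big\|_p \;\lesssim\; \sup_{t>0}\|\varphi m(t\cdot)\|_{L^2_\alpha} \cdot \|G^\alpha\|_{L^p\to L^p} \cdot \|f\|_p.
\]

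\emph{Step 2 (Invoke the square-function theorem and dualize).} For $p\in[\tfrac{2(d+2)}{d},\infty)$ and $\alpha>d(\tfrac12-\tfrac1p)$, Theorem~\ref{steinsquarelp} gives $\|G^\alpha\|_{L^p\to L^p}<\infty$, which closes the argument in this range. For $p\in(1,\tfrac{2(d+2)}{d+4}]$, observe that $T_m$ on $L^p$ is the adjoint of $T_{\overline m}$ on $L^{p'}$ under the standard pairing, the hypothesis on $m$ is invariant under complex conjugation, and the Sobolev condition $\alpha>d|1/p-1/2|$ is symmetric under $p\leftrightarrow p'$. Since $p'\in[\tfrac{2(d+2)}{d},\infty)$, duality transfers the bound to this second range. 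The main technical obstacle is Step~1: the CGT inequality is usually stated for a single compactly supported multiplier, so one has to run it in a vector-valued form (or interleave it with a frequency-localized Littlewood--Paley argument) in order to combine the dyadic pieces while preserving precisely the scale-invariant norm $\sup_{t>0}\|\varphi m(t\cdot)\|_{L^2_\alpha}$ that appears on the right-hand side. Everything else is then a direct consequence of Theorem~\ref{steinsquarelp} together with the self-duality of radial multipliers.
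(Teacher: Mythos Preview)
Your proposal is correct and matches the paper's approach: the paper does not spell out a proof but simply says the corollary follows from the Carbery--Gasper--Trebels inequality together with Theorem~\ref{steinsquarelp} (and, implicitly, duality for the range $p\le \tfrac{2(d+2)}{d+4}$), which is exactly your Step~2.

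One remark on Step~1: the dyadic reduction you carry out is unnecessary, and this dissolves what you identify as ``the main technical obstacle.'' The CGT inequality is in fact a \emph{pointwise} bound of the form
\[
|T_m f(x)| \;\lc\; \sup_{t>0}\|\varphi\, m(t\,\cdot\,)\|_{L^2_\alpha}\, G^\alpha f(x),
\]
obtained by writing $m(|\xi|)$ as an integral in $t$ against $(1-|\xi|^2/t^2)_+^{\alpha-1}$ and applying Cauchy--Schwarz in $t$; the dilation-invariant Sobolev norm on the right already absorbs all scales simultaneously. Taking $L^p$ norms and invoking Theorem~\ref{steinsquarelp} gives the result for $p\ge \tfrac{2(d+2)}{d}$ directly, with no need for a vector-valued version of CGT or a separate Littlewood--Paley recombination. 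Your duality argument for the dual range is exactly right.
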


\subsubsection*{\bf Weighted norm inequalities}
More information about $G^\alpha$ can be obtained by considering
an $L^2$ weighted norm inequality which involves  a
``universal'' maximal  operator $\fV_q$ acting on the weights, and
 which is strong enough to imply the above $L^p$ estimates.
The operator $\fV_q$  is a maximal operator which
is bounded on $L^r$ for $q<r\le \infty$,
and the  $L^p$ estimates  for $G^\alpha$ in Theorem
\ref{steinsquarelp} can be deduced after an additional interpolation
 if we take  $q<(p/2)'$. An informal discussion of the definition of the
weight operator is given below.

\begin{theorem}\label{rieszweighted}
 Let $d\ge 2$ and $q\in (1,\frac{d+2}{2})$.
Then there is  an operator
$\fV_q$  which is
bounded on $L^r$ for $q<r\le \infty$, such that
\begin{equation}\label{labelweightG}
\int_{\bbR^d}
  \big|G^\alpha f(x)\big|^2 w(x)\,dx
\lc \int |f(x)|^2\, \frak V_q w(x)\, dx, \quad \alpha>\frac{d}{2q}\,.
\end{equation}
Moreover,
\begin{equation}\label{labelweightriesz}
\int_{\bbR^d}
 \sup_{t>0}  \big|\mathcal R^\lambda_t  f(x)\big|^2 w(x)\,dx
\lc \int |f(x)|^2\, \frak V_q w(x)\, dx, \quad
\lambda>\frac{d-q}{2q}\,.
\end{equation}
\end{theorem}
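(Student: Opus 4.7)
My plan is to reduce \eqref{labelweightG} to a scale-by-scale weighted $L^2$ estimate for operators localized to a $\delta$-neighborhood of the sphere, prove this single-scale bound via Tao's bilinear restriction theorem, and then derive \eqref{labelweightriesz} from \eqref{labelweightG} by trading half a $t$-derivative for the supremum. Starting from
$$t\partial_t\widehat{\cR^\alpha_t f}(\xi)=2\alpha\frac{|\xi|^2}{t^2}\Bigl(1-\frac{|\xi|^2}{t^2}\Bigr)_+^{\alpha-1}\widehat f(\xi),$$
I partition the symbol into smooth pieces supported where $1-|\xi|/t\sim 2^{-k}$, which gives the pointwise bound
$$G^\alpha f(x)\;\lc\; \sum_{k\ge 0}\,2^{-k(\alpha-1/2)}\,g_k f(x),$$
where $g_k$ is an $L^2$-normalized square function with symbol supported on a $2^{-k}$-thickening of the cone $|\xi|=t$. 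By Minkowski in $k$, it then suffices to prove the single-scale inequality
\begin{equation*}
\int |g_\delta f|^2\,w\,dx\;\lc\;\delta^{1-d/q}\int |f|^2\,\fV_q w\,dx \qquad (\delta=2^{-k}),
\end{equation*}
because the resulting $k$-th summand becomes $2^{-k(\alpha-d/(2q))}\|f\|_{L^2(\fV_q w)}$, which forms a convergent geometric series exactly when $\alpha>d/(2q)$.

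\textbf{Single-scale estimate via Tao's theorem.} To establish this inequality I further decompose $g_\delta=\sum_\tau g_{\delta,\tau}$, where $\tau$ ranges over a boundedly overlapping cover of the sphere by $\sqrt\delta$-caps. Expanding $|g_\delta f|^2=\sum_{\tau,\tau'}g_{\delta,\tau}f\,\overline{g_{\delta,\tau'}f}$ and performing a Whitney decomposition in the pair $(\tau,\tau')$ yields a near-diagonal part (angular separation $\lc\sqrt\delta$) and a transversal part. The near-diagonal part is controlled by orthogonality and a trivial pointwise domination by $\fV_q w$. The transversal terms are dualized and reduced, via H\"older's inequality, to Tao's bilinear extension estimate $\|Eg_1\,Eg_2\|_{L^{(d+2)/d+\eps}}\lc\|g_1\|_2\|g_2\|_2$; the dual exponent $((d+2)/d)'=(d+2)/2$ is exactly the upper endpoint on $q$ appearing in the hypothesis.

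\textbf{The weight $\fV_q$ and the maximal Bochner-Riesz bound.} I would define
$$\fV_q w(x)=\sup\Bigl(\frac{1}{|\Pi|}\int_\Pi |w|^q\Bigr)^{1/q},$$
with the supremum taken over all parallelepipeds $\Pi\ni x$ of dimensions $\rho^{-1/2}\times\cdots\times\rho^{-1/2}\times\rho^{-1}$, $\rho>0$, and arbitrary orientation of the long axis; these are the natural plates dual to the $\sqrt\delta$-caps above. The $L^r$-boundedness of $\fV_q$ for every $r>q$ then follows by interpolation between the trivial $L^\infty$ bound and Fefferman-Stein vector-valued maximal estimates for the underlying plate maximal function. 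Finally \eqref{labelweightriesz} follows from \eqref{labelweightG} through the standard Kolmogorov-Seliverstov / weighted Sobolev embedding
$$\sup_{t>0}|\cR^\lambda_t f(x)|\;\lc\; Mf(x)+G^{\lambda+1/2+\eps}f(x),$$
combined with the Fefferman-Stein weighted Hardy-Littlewood bound $\int |Mf|^2 w\,dx\lc \int |f|^2\fV_q w\,dx$. Applying \eqref{labelweightG} at $\alpha=\lambda+1/2+\eps$ recovers the stated threshold $\lambda>d/(2q)-1/2=(d-q)/(2q)$.

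\textbf{Main obstacle.} The crux of the proof is the single-scale step: one must convert Tao's $L^p$ bilinear restriction estimate into a weighted $L^2$ inequality with the \emph{sharp} $\delta$-scaling $\delta^{1-d/q}$ and with a weight operator $\fV_q$ that remains bounded on $L^r$ for \emph{every} $r>q$. The plates in the definition of $\fV_q$ must be tuned to the transversality exploited in Tao's theorem, and the careful accounting of $\delta$-factors across the dyadic scales is what produces the sharp thresholds $\alpha>d/(2q)$ and $\lambda>(d-q)/(2q)$, while the upper endpoint $q<(d+2)/2$ is dictated by the range of exponents in Tao's bilinear restriction theorem.
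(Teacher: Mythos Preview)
Your overall architecture (dyadic reduction to a single-scale weighted $L^2$ bound, Whitney decomposition in angle, Tao's bilinear theorem for the transversal interactions, and the deduction of \eqref{labelweightriesz} from \eqref{labelweightG} via the pointwise bound $\sup_t|\cR^\lambda_t f|\lc Mf + G^{\lambda+1/2+\eps}f$) is exactly the scheme the paper follows. The genuine gap is in the weight operator.

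Your proposed $\fV_q w(x)=\sup_\Pi\bigl(|\Pi|^{-1}\int_\Pi|w|^q\bigr)^{1/q}$, with the supremum over \emph{all} directions and \emph{all} eccentricities, is not bounded on $L^r$ for any finite $r$: this is the Besicovitch/Kakeya obstruction, and the Fefferman--Stein vector-valued inequality (which concerns the Hardy--Littlewood maximal function, not a directional one) does not help. The paper in fact singles this out as an open problem of Stein --- whether the weight can be taken to be a Nikodym maximal operator is explicitly noted as ``currently unknown.'' So the step ``$L^r$-boundedness of $\fV_q$ follows by interpolation and Fefferman--Stein'' fails, and with it the whole argument.

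What the paper does instead is considerably more delicate. The weight operator has two parts: a Nikodym term $\fM_{\sqrt\delta}w$ that is \emph{damped} by the small factor $\delta^{d/q-1}$ (so the Kakeya loss is harmless), and a main term built from maximal averages of $|\Psi*P_lw|^q$, where $P_l$ is a Littlewood--Paley projection acting on the \emph{weight}. The point is that $P_lw$ has annular Fourier support, and this cancellation is what makes the large-eccentricity pieces of the main term summable in $L^r$ for $r>q$ (Proposition~\ref{maximaltheorem}). Without inserting such a frequency decomposition of $w$ and exploiting its cancellation, you cannot get a weight operator that is simultaneously large enough to absorb the bilinear output and bounded on $L^r$ for all $r>q$.
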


The weighted inequalities
\eqref{labelweightG} and \eqref{labelweightriesz}
are motivated by  one of Stein's
problems in ~\cite{st}. It was asked  whether the operator  defining the
weight on the right hand side of \eqref{labelweightriesz}
could  be chosen to be   a  Nikodym maximal
operator (see also C\'ordoba \cite{co0} for a related question).
This seems currently unknown.
For the range $q\in(1,\frac{d+1}2]$,
Christ~\cite{ch} proved the weighted inequality with the  simple
weight $(M|w|^q)^{1/q}$, where  $M$ denotes the Hardy--Littlewood
maximal operator. In two dimensions, Carbery~\cite{ca2} proved a
weighted inequality with an  operator $W_2$ in place of $\fV_2$,
such that $W_2$ is bounded on $L^r(\bbR^2)$ for $r\in (2,4]$.
 The extension of that
result  with the weight operator  bounded for
$r\in (2,\infty]$ was established by Carbery
and the third author~\cite{case}.

We now give an informal description of the weight operator $\fV_q$
 and refer to \S\ref{defofW}
for the  precise
description of a closely related  operator $\fW_q$
(with a more technical definition)
which  will be of weak
type $(q,q)$ and  can
be used instead of
$\fV_q$. The reader may then check that $\fV_q$ satisfies
$\fV_{q}\gc \fW_{q}$ and is still bounded on $L^{r}$ for $r>q$.
In \S\ref{defofW}  we
shall also  prove  a  refinement of Theorem \ref{rieszweighted}
which will be significant for  endpoint bounds
such as Theorem \ref{wavey} below.

 The weight
$\fV_qw$ involves the sum of two maximal functions associated with
tubes of large eccentricity;
$$ \fV_q w=
\Big(M \big[\sup_{\ecc\ge 1} {\ecc}^{-2(\frac dq-1)} \big(M
V_{\ecc}w \, +\, [\log(2+\ecc)]^{2} \sup_{l\in \bbZ} M
V^{\text{main}}_{\ecc,l,q}(P_lw)\big)\big]^{1+\eps}\Big)^{\frac{1}{1+\eps}}\,,$$
with some $\eps>0$.
Here  $P_l w$ is a standard dyadic frequency cutoff localizing
$\widehat w$ to frequencies of size $\approx 2^l$. The first summand
is similar to the  standard Nikodym maximal function, but much
better behaved due to the small damping factor. For $\ecc\ge 1$, the
function $V_{\ecc} w$ is the usual maximal function  associated with
the tubes (or cylinders) which are centered at the origin with
eccentricity (defined as the quotient  length/width) equal to
$\ecc$. The second summand involves the
 maximal function
$$V_{\ecc,l,q}^{\text{main}}g(x)=
 \sup_{\theta\in S^{d-1}}\big(M_{\theta,\ecc}[\sup_{\Psi}
|\Psi* g|^q](x)\big)^{1/q},$$ where the supremum in
 $\Psi$ ranges over  suitable classes of  $L^1$-normalized Schwartz functions associated with
tubes  of length $2^{-l}\ecc$ and width $2^{-l}$, in the direction
of $\theta$. The maximal operator $M_{\theta,\ecc}$ is associated
with the tubes in the direction $\theta$, with fixed eccentricity
$\ecc$.
This  definition is reminiscent of the
``grand maximal function'' in the theory of Hardy spaces
(\cite{fest}) as it involves  a supremum over convolutions with kernels
in a suitably normalized and rescaled
class of Schwartz functions, and a significant  gain is achieved when these kernels
are convolved with functions that have a suitable cancellation
property  (such as $P_l w$).

Concerning the boundedness properties of  $\fV_q$, for the first summand
 we shall
only  need a standard and non-optimal  bound $\|V_\ecc\|_{L^q\to
L^q}=O(\ecc^{a})$ with $a>(d-2)/q$, for $q\ge 2$, and the small
factor $\ecc^{2-2d/q}$ helps to get   the claimed bound. For the
main term the range cannot be improved
 as the maximal function involves powers $|\Phi*w|^q$. Here  the
terms with small  eccentricity ($\approx 1$)  contribute most, and  in order to establish the proper bounds for terms with
large eccentricity  one uses cancellation, namely the  annular support property
of $\widehat {P_lw}$.




\subsubsection*{\bf Wave and Schr\"odinger operators.}
One can apply $L^p$ bounds for variants of the Bochner-Riesz square-function
to  obtain regularity results for spherical means
and  solutions of the
wave and Schr\"odinger equations.
This application is suggested  by a theorem of
 Kaneko and Sunouchi~\cite{ks} relating $G^\alpha$ to another
square function which was   introduced by Stein in his study of spherical maximal operators (\cf.~\cite{stsph}).
Define  the spherical mean of order
$\beta>0$
by
$$\cA^\beta_t f(x) = \frac{\Gamma(\frac{d+2\beta}{2})}{\pi^{d/2}\Gamma(\beta)}
\int_{|y|\le t}\frac{1}{t^{d}} \Big(1-\frac{|y|^2}{t^2}\Big)^{\beta-1} f(x-y)\,dy;$$
 for smaller values of $\beta$ the definition can be extended
by analytic
continuation. In~\cite{ks}  an application of Plancherel's theorem with
 respect to $t$  is used to show  that $G^\alpha$ is
pointwise
equivalent with a square function generated by  spherical means, namely, for $\alpha>0$,
$$G^{\alpha} f(x) \approx  \Big(\int_0^\infty \Big| \frac{\partial}{\partial t}
\cA^{\alpha- \frac{d-2}{2}}_t \!f(x)\Big|^2 t\,dt\Big)^{1/2}, \quad
$$
for  all Schwartz functions $f$.
Here we shall  not use this equivalence explicitly but prove a closely
related sharp $L^p(L^2)$ regularity result
for solutions of the wave and Schr\"odinger
 equations with initial data in $L^p$
Sobolev spaces.
In order to formulate a unifying result, for $a\in(0,\infty)$, we let $U_t^af$ denote the solution
to the initial value problem
$\im\partial_tu +(-\Delta_x)^{a/2}u=0$ with  $u(\,\cdot\,,0)=f$;
\Be\label{Ua}
U_t^a f  = \exp (\im t(-\Delta)^{a/2})f.
\Ee
The case $a=2$ corresponds to the Schr\"odinger equation and the
case $a=1$ to a  wave equation.

\begin{theorem}\label{wavey} Let   $d\ge 2$, $p\in(\frac{2(d+2)}{d},\infty)$, $a\in(0,\infty)$, and let $I$ be a compact time interval. Then
\begin{equation}\label{psa}
\Big\|\Big(\int_I|U^a_t f|^2 \,dt\Big)^{1/2}\Big\|_{p}
\lc \|f\|\ci{L^p_{s}},\quad
\frac sa =d\Big(\frac{1}{2}-\frac{1}{p}\Big)-\frac{1}{2}.
\end{equation}
\end{theorem}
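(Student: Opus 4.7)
}
The plan is to perform a Littlewood-Paley decomposition to reduce to frequency-localized data, rescale to unit frequency, and then invoke Theorem~\ref{steinsquarelp} (or its weighted refinement Theorem~\ref{rieszweighted}) via a Plancherel-in-time identity. Concretely, write $f = \sum_{k\ge 0} P_k f$ with $\widehat{P_k f}$ supported in $\{|\xi|\sim 2^k\}$. By the vector-valued Littlewood-Paley inequality for $L^p_x(L^2_t)$ (valid in $1<p<\infty$), the main estimate reduces to the frequency-localized bound
\begin{equation*}
\Big\|\Big(\int_I |U_t^a P_k f|^2\,dt\Big)^{1/2}\Big\|_p \lc 2^{ks}\|P_k f\|_p
\end{equation*}
for each $k\ge 0$. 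Rescaling space by $2^{-k}$ and time by $2^{-ka}$, setting $g(y)=P_kf(2^{-k}y)$ so that $\widehat g$ is supported in $\{|\eta|\sim 1\}$, and writing $L=2^{ka}$, this becomes the long-time square function estimate
\begin{equation*}
\Big\|\Big(\int_{L\cdot I} |U_\tau^a g|^2\,d\tau\Big)^{1/2}\Big\|_p \lc L^{\alpha_0}\|g\|_p,\qquad \alpha_0 = d\bigl(\tfrac12-\tfrac1p\bigr).
\end{equation*}

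Enlarging the time domain to $\bbR$ only strengthens the left-hand side, and Plancherel in $\tau$ followed by the substitution $\tau=r^a$ yields the identity
\begin{equation*}
\int_\bbR |U_\tau^a g(x)|^2\,d\tau \;=\; \frac{2\pi}{a}\int_0^\infty r^{2d-a-1}\,|E(r)\widehat g(x)|^2\,dr,
\end{equation*}
where $E(r)h(x):=\int_{S^{d-1}} h(r\omega)e^{ix\cdot r\omega}\,d\omega$ is the Fourier extension operator onto the sphere of radius $r$. Since $\widehat g$ lives in $\{|\eta|\sim 1\}$, only $r\sim 1$ contributes and the weight $r^{2d-a-1}$ is bounded, so it suffices to establish the $L$-independent bound
\begin{equation*}
\Big\|\Big(\int_{1/2}^{2} |E(r)\widehat g|^2\,dr\Big)^{1/2}\Big\|_p \lc \|g\|_p,
\end{equation*}
which is automatically dominated by $L^{\alpha_0}\|g\|_p$ for $L\ge 1$. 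This extension-operator square function will be controlled by the Stein square function $G^\alpha g$ via a Kaneko-Sunouchi-type pointwise comparison (after a change of variable in $t$, $G^\alpha g$ is a smoothed version of the $E(r)$ square function with smoothing parameter $\alpha$), and Theorem~\ref{steinsquarelp} then closes this step for any $\alpha$ slightly above $\alpha_0$.

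The main obstacle is that the Sobolev index in the statement, $s/a = d(\tfrac12-\tfrac1p)-\tfrac12 = \alpha_0-\tfrac12$, sits at the endpoint $\alpha=\alpha_0$ of Theorem~\ref{steinsquarelp}, whereas that theorem provides only the strict inequality $\alpha>\alpha_0$. To close this endpoint gap I would invoke the weighted refinement of Theorem~\ref{rieszweighted}: choosing $q<(p/2)'$, the inequality $\int |G^\alpha g|^2\,w \lc \int |g|^2\,\fV_q w$ combined with the $L^r$-boundedness of $\fV_q$ for $r>q$ allows an interpolation (or a Stein-style extrapolation / atomic decomposition in the spirit of~\cite{case}) that produces the endpoint $L^p$-bound at $\alpha=\alpha_0$. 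This extrapolation step, trading freedom in $q$ for endpoint sharpness in $\alpha$, is the most delicate ingredient beyond the Stein square function estimate itself.
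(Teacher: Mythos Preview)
Your reduction breaks at the step ``enlarging the time domain to $\bbR$.''  The global-in-time quantity $\int_\bbR |U_\tau^a g(x)|^2\,d\tau$ is \emph{not} controlled in $L^p$ by $\|g\|_p$: already at $p=2$ one has $\|U_\tau^a g\|_2=\|g\|_2$ for every $\tau$, so the $\tau$-integral diverges.  In terms of your extension square function, $E(r)\widehat g$ is the Fourier multiplier given by surface measure on the sphere of radius $r$; it is the $\delta\to 0$ limit of $\delta^{-1} S_r^\delta g$, and Corollary~\ref{Tdeltatcor} shows that $\big\|\big(\int_{1}^{2}|S_t^\delta g|^2\,dt\big)^{1/2}\big\|_p$ behaves like $\delta^{1-d(1/2-1/p)}$, so the putative bound $\big\|\big(\int_{1/2}^{2}|E(r)\widehat g|^2\,dr\big)^{1/2}\big\|_p\lc\|g\|_p$ is false for every $p>2$.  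By throwing away the finiteness of the time interval you have discarded exactly the information encoded in the exponent $s$.

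The paper's argument keeps that information: before taking Plancherel in $t$ one multiplies by $\widehat\phi(t)$ with $\phi\in C_c^\infty$ and $\widehat\phi>c$ on $I$.  Plancherel then produces the multiplier $\phi\big(|\xi|^a-\tau\big)$, a \emph{thin} annulus cut-off, and after rescaling one lands precisely on $\cS_t^\delta$ with $\delta\approx 2^{-ka}$.  The sharp bound $\delta^{1-d(1/2-1/p)}$ from Corollary~\ref{Tdeltatcor} (not Theorem~\ref{steinsquarelp}) then gives the frequency-localized estimate of Proposition~\ref{wavefrloc} with the \emph{exact} power $2^{ka\lambda(p)}$; there is no endpoint gap to close, so the extrapolation via Theorem~\ref{rieszweighted} you worry about is unnecessary.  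Finally, summing the dyadic pieces requires a little more than Littlewood--Paley (the pieces do not sit inside a single $L^p$ after the $L^2_t$-norm); this is handled by Proposition~\ref{combin} in the appendix.
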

\noindent In fact
 this holds for initial data in the Besov space $B^{p}_{s,p} $ (which contains $L^p_s$ for  $p\ge 2$).

One can also consider the same regularity problem
in the mixed norm space $L^p(L^q(I))$ with $q\in(2,\infty]$.
For this range the analogy between the wave and Schr\"odinger equation breaks down
(and  some endpoint versions of the deeper `local smoothing'
 result for the wave equation are currently available only in  four and higher
dimensions, \cf.~\cite{honase}).
However, for  $a\in (0,1)\cup(1,\infty)$ and $d\ge 2$, one can deduce sharp
$B^{p}_{s,p}\to L^p(L^q(I))$
estimates with
$s =ad(1/2-1/p)-a/q$,
 in the range
$p\in(2+ 4/d,\infty)$.
This follows
from a combination of Theorem~\ref{wavey} and the  result in  Appendix~\ref{localtoglobal}.
Moreover,
one can, for a limited
range of $q$, obtain further estimates for $p>2+4/(d+1)$ and $d\ge 1$,
 essentially by interpolation with results in
\cite{rose}; in dimensions $d\ge 2$
this currently requires the restriction $a>1$.
 These $L^p(L^q)$-estimates are stated in \S\ref{LpLqsect},
and a further, more substantial improvement for $d=2$
will be considered in
\cite{lerose2}.

\subsubsection*{Remark} After the first version of this paper was submitted
for publication,
Bourgain and
Guth posted a preprint \cite{bogu}  containing  very substantial
 improvements on the
$L^p$  boundedness for oscillatory integrals related to the Fourier
restriction problem, with implications for Bochner-Riesz
multipliers. It would be of great interest to investigate the impact
of their methods on Stein's square-function, weighted norm
inequalities, and other issues discussed in this paper.

\subsubsection*{This paper}
In \S\ref{defofW} we formulate a more precise weighted inequality (Theorem~\ref{weightL2thm}), and give the  definitions and boundedness properties
 of suitable weight operators.
 In \S\ref{biladjrestr} we prove some $L^2\to L^p$
estimates for radial convolution operators and prepare for the proof
of the  weighted inequalities. These are established in
\S\ref{proofweighted}. In \S\ref{wschrsect} we prove $L^p(L^2)$
estimates for wave and Schr\"odinger equations and in
\S\ref{LpLqsect} we discuss some $L^p(L^q)$ bounds for $q>2$.
Appendix~\ref{localtoglobal} contains  auxiliary results on
combining inequalities for frequency localized operators.

\subsubsection*{Some notational references}
For two nonnegative quantities $A$, $B$ the notation  $A\lc B$, or
 $B\gc A$,  is used for $A\le CB$, with  some unspecified positive constant $C$.
We also use $A\approx B$ to indicate that $A\lc B$ and $B\lc A$.
To avoid unwieldy formulas we will
sometimes shorten the notation for   products involving a complex conjugate and use, given two complex  terms  $\cE_1$ and $\cE_2$,   the expression
$\prodstartext[\cE_i] = \cE_1\overline{\cE_2}.$
For convolution operators given by Fourier multipliers $a(\xi)$ we occasionally use
the symbol notation
$a(D)f:=  \cF^{-1}[a\widehat f \,]$, where $\cF^{-1}$ denotes the inverse Fourier transform.
By $P_n$, $\widetilde P_n$  we denote dyadic frequency cutoff operators
which  localize to
frequencies of size $\approx 2^n$, so that $P_n\widetilde P_n=P_n$,  see
\S\ref{defofW} for the precise definition.

\section{A stronger weighted norm inequality}\label{defofW}
We formulate a  weighted  norm inequality for a square function
generated  by thin pieces of the Bochner-Riesz multiplier.
To fix notation,
let $\phi$ be a Schwartz function supported in $(1/2,2)$ with the property that
\Be\label{schwartzestimates}
|\phi^{(\nu)}(t)|\le 1 , \quad \nu=0,\dots, d+2. \Ee
Let $0<\delta< 1/2$ and define the convolution operator $S^{\delta}_t\equiv S^{\delta,\phi}_t$ by
\Be \label{Sdeltat}\widehat{S^\delta_t f}(\xi)= \phi\big(\delta^{-1}\big(1-\tfrac{|\xi|^2}{t^2}\big)\big)\widehat f(\xi).
\Ee
Assuming \eqref{schwartzestimates} we shall usually drop the
superscript $\phi$,
as our estimates will be understood to be uniform in  $\phi$.

\begin{theorem}\label{weightL2thm}
Let $d\ge 2$ and $q\in(1,\frac{d+2}{2})$.
For  $0<\delta< 1/2$,
there are operators
$\fW_{q,\delta}$
defined on $L^q+L^\infty$, so that
the weighted norm inequality
\Be\label{weightboundSdelta}
\int_{\R^d}\int_{0}^\infty|S^\delta_tf(x)|^2\frac{dt}{t}\, w(x)\,dx
\,\lc\,\delta^{2-d/q}\int_{\R^d} |f(x)|^2\,\fW_{q,\delta}
w(x)\,dx
\Ee
holds for all $w\in L^q+L^\infty$
and the  operators $\fW_{q,\delta}$ satisfy the following properties:

(i) The maximal operator  defined by
\begin{equation}\label{tag}\fW_qw =\sup_{0<\delta<1/2} \fW_{q,\delta} w\end{equation}
is of weak type $(q,q)$ and bounded on $L^r$ for
$q<r\le\infty$.

(ii) If $q\in[2,\frac{d+2}{2})$, then the operators $\fW_{q,\delta}$
are bounded
on $L^q$, uniformly in $\delta$.
Moreover if $q\in(1,2)$ then $\|\fW_{q,\delta}\|_{L^q\to L^q} \lc [\log(\tfrac 1\delta)]^{\frac 1q-\frac 12}$.

\end{theorem}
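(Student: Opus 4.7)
The plan is to combine an angular decomposition of the multiplier $\phi(\delta^{-1}(1-|\xi|^2/t^2))$ with Tao's bilinear adjoint restriction estimate, and then convert the resulting $L^q$-endpoint bilinear bound into the weighted $L^2$-inequality \eqref{weightboundSdelta} by absorbing $w$ into the grand maximal operator $\fW_{q,\delta}$. I would begin with a Littlewood--Paley reduction: writing $f = \sum_n P_n f$, the near-orthogonality in $t$ of the pieces $S^\delta_t P_n f$ (they live essentially on $t \approx 2^n$) reduces \eqref{weightboundSdelta} to a uniform bound for a single dyadic piece, after rescaling to $|\xi|\approx 1$ and $t \in [1,2]$. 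The reassembly error is exactly what forces the $\sup_l M V^{\text{main}}_{\ecc,l,q}(P_l w)$ summand into the definition of the weight, since $\widehat{P_l w}$ has annular Fourier support which is what the ``main'' maximal operator is designed to exploit.

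Once reduced to unit frequency scale, I would decompose the $\delta$-annulus $\{|1-|\xi|^2/t^2|\lc\delta\}$ into essentially disjoint spherical caps $\tau$ of angular radius $\sqrt\delta$, giving $f=\sum_\tau f_\tau$. Each $S^\delta_t f_\tau$ is spatially concentrated on a tube $T_\tau$ of dimensions $\delta^{-1}\times(\delta^{-1/2})^{d-1}$, hence of eccentricity $\ecc=\delta^{-1/2}$. Expanding $|S^\delta_t f|^2 = \sum_{\tau,\tau'} S^\delta_t f_\tau \overline{S^\delta_t f_{\tau'}}$ and performing a Whitney decomposition in the transversality scale $2^j\sqrt\delta$, the diagonal pieces ($\tau\sim\tau'$) are controlled by $L^2$-orthogonality together with tube localization, while each transversality level is handled by Tao's bilinear adjoint restriction estimate after the parabolic rescaling that normalizes the caps; this yields the $L^q$-bilinear bound that the estimates of \S\ref{biladjrestr} are designed to supply.

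To pass from this bilinear $L^q$-estimate to the weighted $L^2$-inequality I would insert a tube-localization step before dualizing. Since $S^\delta_t f_\tau$ is essentially supported in $T_\tau$, one has schematically
$$\int\!\!\int |S^\delta_t f|^2\, w\, \tfrac{dt}{t} \lc \sum_{\tau,\tau'}\int\!\!\int S^\delta_t f_\tau\,\overline{S^\delta_t f_{\tau'}}\, w\,\tfrac{dt}{t},$$
and an application of H\"older in $x$ at the scale of $T_\tau\cap T_{\tau'}$, followed by the bilinear $L^q$-bound, controls each term by $\|f_\tau\|_2\|f_{\tau'}\|_2$ times an $L^q$-average of $w$ over tubes of eccentricity $\ecc$ in the direction of the cap centers. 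Summing over $\tau,\tau'$ and the Whitney scale reconstructs the grand maximal expression $\fW_{q,\delta}w$ once the eccentricity damping factor $\ecc^{-2(d/q-1)}$ is inserted; the latter combines with the constant in Tao's bilinear estimate to produce exactly the claimed $\delta^{2-d/q}$.

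The main obstacle is the last item: verifying the stated boundedness properties of $\fW_{q,\delta}$ and $\fW_q$. For the Nikodym-type summand $M V_\ecc$ one relies on a crude estimate $\|V_\ecc\|_{L^r\to L^r}=O(\ecc^{(d-2)/r+\eta})$; its sum over dyadic $\ecc$ is then rendered convergent by the damping factor on any $L^r$ with $r>q$, and an interpolation with the trivial $L^\infty$-bound provides the weak-$(q,q)$ endpoint. The ``main'' summand is the delicate piece: here one has to exploit the annular Fourier support of $P_l w$ and a Fefferman--Stein-type vector-valued argument in the spirit of \cite{case} to dominate it by a power of the Hardy--Littlewood maximal function applied to $|P_l w|^q$, at the cost of a $\log(2+\ecc)$ factor which is again absorbed by the eccentricity damping. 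For $q\in[2,\frac{d+2}{2})$ the $L^q$-bound is uniform in $\delta$ because the outer $(1+\eps)$-power lies safely inside $L^q$; for $q\in(1,2)$ the loss $[\log(1/\delta)]^{1/q-1/2}$ arises from restricting to eccentricities $\ecc\lc\delta^{-1/2}$ and interpolating the weak-$(q,q)$ bound against an $L^r$-bound with $r>2$.
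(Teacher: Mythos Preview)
Your outline is essentially the paper's own argument: Littlewood--Paley reduction to unit scale, Whitney decomposition of $|S^\delta_t f|^2$ into a diagonal piece (handled by tube localization and the Nikodym maximal function) and transversal pieces at scales $2^{-j}$ (handled, after parabolic rescaling, by Tao's bilinear adjoint restriction estimate combined with H\"older on cubes of the appropriate scale to bring in $|w|^q$), followed by summation over caps via Carleson--C\'ordoba $L^2$ orthogonality. The weight-operator bounds are also obtained in the paper by the mechanism you sketch---an overlap count on the Fourier side giving $2^{j(d-2)/q}$ for $q\ge 2$, beaten by the damping $2^{-2j(d/q-1)}$ precisely when $q<\tfrac{d+2}{2}$.

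One point where your description does not match the paper and would need care: the logarithmic loss $[\log(1/\delta)]^{1/q-1/2}$ for $q\in(1,2)$ does \emph{not} come from interpolating a weak-$(q,q)$ bound against an $L^r$ bound with $r>2$. In the paper it arises from an $H^1\to L^1$ estimate for $\cW^j_{1,\delta}$ on atoms (where the $\log(1/\delta)^{1/2}$ appears from Cauchy--Schwarz over the $O(\log(1/\delta))$ intermediate Littlewood--Paley scales between $\rho^{-1}$ and $(\rho\delta)^{-1}$), followed by complex interpolation with the $L^2$ bound. Your proposed interpolation would not obviously produce the exponent $1/q-1/2$, since the weak-$(q,q)$ bound for the $\delta$-uniform maximal operator itself already carries a $j$-dependent loss $(1+j)^{1/q-1/2}$ rather than a $\delta$-dependent one.
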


 We shall also consider local versions of
\eqref{weightboundSdelta} with the $t$-integral extended over a dyadic interval and for which  the $L^q$ bounds of the
 corresponding weight operators  are independent of $\delta$ for all
$q\in (1,\frac{d+2}{2})$, see Theorem \ref{Tdeltat} below.

To deduce  inequality \eqref{labelweightG} with $\fW_q$ in place of
$\fV_q$
one splits  the multiplier
into  a part  near the origin and a part near the unit sphere.
The part near the origin is  dealt with by
 the
standard weighted norm inequality for singular integrals in
\cite{cofe}. One then  decomposes the  part near the unit sphere
into smooth multipliers supported on thin annuli of width
$\delta=2^{-j}$, applies  Theorem \ref{weightL2thm}, and sums a
geometric series. The maximal inequality  \eqref{labelweightriesz}
follows from \eqref{labelweightG}  by well-known arguments in
(\cite[\S VII.5]{stwe}) together with a weighted norm inequality for
the Hardy--Littlewood maximal function (\cite{fest0}). If we take
$q=(p/2)'$
then by  duality
and  an
 application of the Marcinkiewicz interpolation theorem
one obtains
Theorem~\ref{steinsquarelp}  for $p>2+4/d$. Interpolation with
an $L^2$ inequality yields the result  also for $p=2+4/d$.
Theorem \ref{weightL2thm} also implies a sharp $L^p$
result for the square-functions generated by $S^\delta_t$ which is stated
 in Corollary \ref{Tdeltatcor}  below.

%

\subsection*
{Definition of $\fW_{q,\delta}$.} We assume throughout this section that
 $d\ge 2$.
The definition of $\fW_{q,\delta} w$ in  \eqref{fWqdel}  involves a
suitably damped
Nikodym maximal function  and another (more important)
 maximal function acting on functions with
Fourier transform supported away from the origin.

Let $0<\delta_\circ<1$, let $\theta$ be a unit vector in $\bbR^d$  and let
$$R_{\delta_\circ,t}^\theta  =
\{y\in \bbR^d: |\inn y \theta|\le t, |y-\inn y\theta \theta |\le t\delta_\circ\}.$$
Then the Nikodym maximal function associated with tubes of
eccentricity $\delta_\circ$ is defined by
\begin{equation}\label{nikodym} \fM_{\delta_\circ} g(x)
=\sup_{\theta\in S^{d-1}}\sup_{t>0}
\frac{1}{|R_{\delta_\circ,t}^\theta|}
\int_{R_{\delta_\circ,t}^\theta} |g(x+y)| dy.
\end{equation}


Now we describe our second maximal operator.
 Let  $\cN \ge d+3$ be a large positive integer
and let
$\cS(\cN)$ be  the set of all Schwartz functions $\psi$
  for which
\Be \label{SNdef}
\trnorm \psi\trnorm_{\cN} := \max_{|\alpha|\le \cN} \sup_x (1+|x|)^{\cN} |\partial^\alpha_x \psi(x)| \,\le\,1.
\Ee
The number $\cN$ will be fixed throughout the paper and constants in
inequalities will  depend on $\cN$ (one may want to choose $\cN=d+3$).

For  $j\ge 0$ and $\theta\in S^{d-1}$ let $\ell_{\theta,j}$ be the unique linear transformation defined by
$$
\ell_{\theta,j} (\theta)= 2^j\theta, \qquad
\ell_{\theta,j} (y)= y\ \text{ if }\ \inn{\theta}{y}=0.
$$
Then $\det \ell_{\theta,j}= 2^j$.
Let $\cS^{\theta,j}_n$ be the set of all
$\Psi$ for which
$2^{-nd}2^j \Psi (\ell_{\theta,j} 2^{-n} \,\cdot\,)$ belongs to $\cS(\cN)$.
Typical examples of  functions in $\cS^{\theta,j}_n$ are $L^1$ normalized
 bump functions  essentially supported on a tube with direction
$\theta$,  length $2^{j-n}$ and width $2^{-n}$. We define a maximal function which involves convolutions with
 $\Psi$ in the
classes   $\cS^{\theta,j}_n$  and in our application it is crucial that
these  convolutions
will be acting on functions with cancellation, namely with frequency support in annuli.
Let $$\cM^{\theta,j}_n g(x)= \sup_{\Psi\in \cS^{\theta,j}_n}|\Psi* g(x)|$$
and set, for $\tau>0$,
\Be \label{cKkernel}
\cK_\tau^{\theta,j}(x)
: = \frac{2^{-j}\tau^d}
 {(1+| \ell_{\theta,j}^{-1}(\tau x)|)^{d+1}}.
\Ee
For future reference note that
\Be \label{Psiestim}
|\Psi(x)|
\,\lc \,
\frac{2^{kd}2^{-j(d+1)}}
 {(1+ 2^{k-2j}|\inn x\theta| +2^{k-j}|x-\inn x\theta \theta|)^{d+3}}
\lc\cK_{2^{k-j}}^{\theta,j}(x)
\,\,\text{ if  $\Psi\in \cS^{\theta,j}_{k-j}\,.$}
\Ee

We use the   dyadic frequency cutoff
operator  $P_n$  defined by
\Be\label{Pndef} \widehat {P_n f}(\xi)=\chi(2^{-n}|\xi|)\widehat f(\xi)\Ee
where $\chi\in C^\infty$ is nonnegative and  supported in $(5/8, 15/8)$ so that
$\sum_{k\in \bbZ} \chi(2^{-k}t)=1$ for all $t>0$.
Set
\begin{align}
\label{cWqdefeqk}
\cW_{q,\delta,k}^j g(x)&:=
\Big(\sup_{\theta} \cK^{\theta,j}_{2^{k+j}\delta} *\big|\cM^{\theta,j}_{k-j}
P_{k-j} g|^q(x) \Big)^{1/q}\, ,
\\
\label{cWqdefeq}\cW_{q,\delta}^j g(x)&:=\sup_{k\in \bbZ} \cW_{q,\delta,k}^j g(x)\, .
\end{align}
Next,  fix $s\in (1,q)$  and define
the maximal operator $\fW_{q,\delta}$ by
\Be \label{fWqdel}\fW_{q,\delta} w\,:=\,\sum_{1\le 2^{2j}<\delta^{-1}}
2^{-2j(\frac dq -1)}\big(
M|M\!\circ\!\cW_{q,\delta}^j w|^s\big)^{1/s}
+\delta^{\frac{d}{q} -1} \big(
M|M\!\circ\!\fM_{\sqrt \delta} w|^s\big)^{1/s}
\Ee
where $M$ is the Hardy--Littlewood maximal operator.
We also recall from the statement of Theorem~\ref{weightL2thm} the definition
$\fW_qw=
\sup_{0<\delta< 1/2}\fW_{q,\delta}w.$



\subsection*{Boundedness of the weight operators}
In the proofs we will frequently use a dyadic frequency cutoff
 $\widetilde P_n$  which reproduces $P_n$ and is similarly defined.
That is to say, $\widehat {\widetilde P_n f}(\xi)=\widetilde \chi(2^{-n}|\xi|)\widehat f(\xi)$
where $\widetilde \chi$  is supported in $(1/2,2)$ and has the property
$\widetilde \chi(s)=1$ for $s\in [5/8, 15/8]$.
Then $P_n\widetilde P_n=P_n$.

It is obvious that the  operators $\cW^j_{q,\delta,k}$,
$\cW^j_{q,\delta}$, $\fM_{\sqrt\delta}$ are bounded on $L^\infty$.
For the $L^q$ boundedness   we state

\begin{proposition}  \label{maximaltheorem} %
(i) For $1\le q\le \infty$, the operator $\cW^j_{q, \delta,k}$
%
%
 satisfies
$$\sup_{\delta,k} \sup_{\|w\|_q\le 1} \|\cW^j_{q,\delta, k} w\|_q\lc
\begin{cases} 2^{j\frac{d-2}q} \quad&\text{ if }\ 2\le q\le\infty,
\\
2^{j(\frac dq -1)} \quad&\text{ if }\ 1\le q< 2.
\end{cases}
$$

\noindent (ii) For $1<q\le \infty$, the operator $\cW^j_{q, \delta}$
satisfies
$$\sup_{\|w\|_q\le 1} \|\cW^j_{q,\delta}w\|_q\lc
\begin{cases} 2^{j\frac{d-2}q} \quad&\text{ if }\ 2\le q\le\infty,
\\
2^{j(\frac dq -1)} [\log(\frac 1\delta)]^{\frac 1q-\frac 12}\quad&\text{ if }\ 1< q< 2.
\end{cases}
$$
Moreover for $q=1$ the operator $\cW^j_{1, \delta}$  maps the Hardy space
$H^1$ to $L^1$ with operator norm $\lc
2^{j(d -1)} [\log(\frac 1\delta)]^{1/2}$.

\noindent (iii)
We also have the weak type $(q,q)$ estimate
$$
\sup_{\|w\|_q\le 1}
\big\| \sup_{0<\delta<1/2} \cW^j_{\delta,q} w\big\|_{L^{q,\infty}}
\lc \begin{cases} 2^{j\frac{d-2}q} \quad&\text{ if }\ 2\le q< \infty,
\\
2^{j(\frac dq -1)}(1+j)^{\frac 1q-\frac 12} \quad&\text{ if }\ 1< q< 2.
\end{cases}
$$
\end{proposition}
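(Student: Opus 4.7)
The plan is to establish the $L^q$ endpoints for $\cW^j_{q,\delta,k}$ in (i) at $q\in\{1,2,\infty\}$ and interpolate, then pass to the supremum in $k$ in (ii) via Littlewood--Paley machinery, and finally handle the supremum in $\delta$ in (iii) via a dyadic decomposition and Marcinkiewicz interpolation. Throughout I will use the uniform bound $\|\cK^{\theta,j}_\tau\|_1\lc 1$ (a change of variables in \eqref{cKkernel}) and the discretization of $\theta\in S^{d-1}$ to a maximal $2^{-j}$-separated net $\Theta$ with $|\Theta|\lc 2^{j(d-1)}$, legitimate because both $\cK^{\theta,j}_\tau$ and the class $\cS^{\theta,j}_{k-j}$ depend on $\theta$ only modulo $2^{-j}$-perturbations.

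After discretization and Young's inequality, part (i) reduces to bounding $\sum_{\theta\in\Theta}\|\cM^{\theta,j}_{k-j} P_{k-j}g\|_q^q$ by the claimed $C(q,j)\|g\|_q^q$. For $q=\infty$ the $L^1$-normalization of each $\Psi\in\cS^{\theta,j}_{k-j}$ gives $\cM^{\theta,j}_{k-j}P_{k-j}g\lc\|g\|_\infty$; for $q=1$ the rapid-decay bound \eqref{Psiestim} yields the pointwise domination $\cM^{\theta,j}_{k-j}f\lc \cK^{\theta,j}_{2^{k-j}}\ast|f|$ and hence $\|\cM^{\theta,j}_{k-j}P_{k-j}g\|_1\lc\|g\|_1$, producing the factor $|\Theta|\lc 2^{j(d-1)}$ after summation. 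The crucial case is $q=2$. I will linearize the supremum over $\Psi$ by choosing measurable $\Psi_{\theta,x}\in\cS^{\theta,j}_{k-j}$ with $\cM^{\theta,j}_{k-j}P_{k-j}g(x)=|T_\theta g(x)|$ for $T_\theta g(x):=\Psi_{\theta,x}\ast P_{k-j}g(x)$, and compute
\[
\sum_{\theta\in\Theta}\|T_\theta g\|_2^2 = \Big\langle \sum_{\theta\in\Theta}T_\theta^\ast T_\theta g,\,g\Big\rangle.
\]
The key geometric observation is that $\widehat\Psi_{\theta,x}$ is essentially concentrated on the slab $\{|\langle\xi,\theta\rangle|\lc 2^{k-2j},\,|\xi-\langle\xi,\theta\rangle\theta|\lc 2^{k-j}\}$, so that for each $\xi$ with $|\xi|\approx 2^{k-j}$ only those $\theta\in\Theta$ lying in a $2^{-j}$-equator of $\xi/|\xi|$ contribute non-negligibly, and there are $\lc 2^{j(d-2)}$ such directions. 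An almost-orthogonality computation (modelled on the $x$-independent case where $\sum_\theta T_\theta^\ast T_\theta$ is a Fourier multiplier bounded by $\sup_\xi\sum_\theta|\widehat\Psi_\theta(\xi)|^2\lc 2^{j(d-2)}$) then yields $\sum_\theta\|T_\theta g\|_2^2\lc 2^{j(d-2)}\|g\|_2^2$, and interpolation between the three endpoints produces the piecewise-linear exponent.

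For (ii), set $F_k = \cW^j_{q,\delta,k}g$. When $q\in[2,\infty]$ the pointwise bound $\sup_k F_k\le(\sum_k F_k^q)^{1/q}$ combined with part (i) and Littlewood--Paley ($(\sum_k|P_kg|^q)^{1/q}\le(\sum_k|P_kg|^2)^{1/2}$ pointwise for $q\ge 2$) gives the clean bound; the case $q=2$ is essentially Plancherel. For $q\in(1,2)$ this Littlewood--Paley step reverses and one uses instead $\sup_k F_k\le(\sum_k F_k^2)^{1/2}$ together with a vector-valued $L^q(\ell^2)$ estimate; the loss $[\log(1/\delta)]^{1/q-1/2}$ arises from exchanging $\ell^2$ and $\ell^q$ over the $\log(1/\delta)$-many dyadic scales $k$ at which $\cW^j_{q,\delta,k}$ gives a non-trivial contribution. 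The $H^1\to L^1$ variant at $q=1$ will be obtained by testing on atoms and using the $L^2$ bound from (i). For (iii), I will decompose $(0,1/2)$ dyadically in $\delta$: for $q\ge 2$ the strong bound from (ii) is uniform in $\delta$ and Marcinkiewicz gives weak type with no further loss, while for $q\in(1,2)$ only dyadic scales $\delta$ with $2^{-2j}\lc \delta < 1/2$ matter (for smaller $\delta$ the Nikodym summand in \eqref{fWqdel} already dominates), so $\log(1/\delta)$ may be replaced by $j$, giving the $(1+j)^{1/q-1/2}$ factor.

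The main difficulty I expect is the $L^2$ estimate in (i): the clean $TT^\ast$ multiplier bound works out-of-the-box only for $x$-independent $\Psi_\theta$, whereas our linearization is genuinely $x$-dependent. To handle this I will either dominate $\cM^{\theta,j}_{k-j}P_{k-j}g(x)$ by a shifted average of a fixed convolution $\Psi_\theta\ast P_{k-j}g$ over a $2^{-j}$-neighborhood of $x$ (a grand-maximal-function-style reduction), or control the resulting off-diagonal $TT^\ast$ pieces directly by Schur-type bounds that exploit the rapid decay encoded in \eqref{SNdef}.
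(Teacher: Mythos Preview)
Your outline for (i) and (ii) matches the paper's approach. The concrete device the paper uses in place of your ``grand-maximal-function-style reduction'' (your option (a)) is a Littlewood--Paley decomposition $I=\sum_{m\ge 0} Q_m^{\theta,j,k}P_m^{\theta,j,k}$ adapted to the slab geometry: one checks $\sup_{\Psi\in\cS^{\theta,j}_{k-j}}|Q_m^{\theta,j,k}\Psi(x)|\lc 2^{-2m}\cK^{\theta,j}_{2^{k-j}}(x)$ pointwise (cancellation of $Q_m$ against smoothness of $\Psi$), which removes the $x$-dependence of the supremum and reduces the $L^2$ bound to a pure overlap count for the multipliers of $P_m^{\theta,j,k}P_{k-j}$ --- exactly your $2^{j(d-2)}$. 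For (ii) with $1<q<2$, note that your claim ``only $\log(1/\delta)$ many $k$ contribute'' is not true for general $g\in L^q$; the paper obtains that restriction only for an $H^1$-atom (the middle range $\rho^{-1}\le 2^k\le(\rho\delta)^{-1}$) and then reaches $1<q<2$ by analytic interpolation with the $q=2$ bound, which is what your final sentence about atoms effectively points toward.

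Your approach to (iii), however, has a genuine gap. A uniform-in-$\delta$ strong $L^q$ bound from (ii) gives no control on $\sup_\delta\cW^j_{q,\delta}w$: summing over dyadic $\delta$ is an infinite sum with no decay, and no version of Marcinkiewicz interpolation repairs this. Your restriction to $2^{-2j}\lc\delta$ is a feature of the definition of $\fW_{q,\delta}$, not of $\cW^j_{q,\delta}$, so it does not apply to the statement of (iii); and even over those $\approx j$ dyadic scales, summing the bounds from (ii) costs an unwanted extra factor of $j$. The paper's mechanism is different and essentially pointwise: taking $\sup_{\delta,k}$ of the outer convolution $\cK^{\theta,j}_{2^{k+j}\delta}*|\cdot|^q$ is dominated by $M^{\theta,j}\big[\sup_k|\cM^{\theta,j}_{k-j}P_{k-j}g|^q\big]$, where $M^{\theta,j}$ is the Hardy--Littlewood maximal operator associated with tubes in direction $\theta$ of eccentricity $2^{-j}$. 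Since $M^{\theta,j}$ is weak type $(1,1)$ uniformly in $\theta,j$, the embedding $\ell^q(L^{q,\infty})\hookrightarrow L^{q,\infty}(\ell^\infty)$ over $\theta\in\Theta_j$ reduces matters to the strong $L^q$ quantity $\big(\sum_\theta\|\sup_k|\cM^{\theta,j}_{k-j}P_{k-j}g|\|_q^q\big)^{1/q}$, now with no $\delta$ present. The $(1+j)^{1/q-1/2}$ factor for $q<2$ then arises from an $H^1\to\ell^1(L^1(\ell^\infty))$ atomic estimate in which the relevant $k$-range has length $\approx j$ (namely $2^j\le 2^k\rho\le 2^{2j}$), interpolated with $q=2$.
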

\begin{proof}[The proposition implies statements
(i) and (ii) of  Theorem \ref{weightL2thm}.] Clearly  the
operators $\fW_{q,\delta}$ and $\fW_q$ are bounded on $L^\infty$ if
$q<d$.
The $L^q$ bound for the first (main) term in
\eqref{fWqdel} is immediate from Proposition \ref{maximaltheorem}
since $\frac{d-2}{q}-2(\frac dq -1)<0$ iff $q<\frac{d+2}{2}$. For
the second term  in \eqref{fWqdel} we use standard non-endpoint
$L^q$ bounds for the Nikodym maximal operator (see \cite{co0},
\cite{co1}, \cite{bo}, \cite{mss}). Namely $\fM_{\sqrt\delta}$
is bounded on $L^q$ with operator norm $\le C_\eps
(\sqrt \delta)^{1- d/q-\eps}$ if $q<2$
and operator norm
$\le C_\eps
(\sqrt \delta)^{- (d-2)/q-\eps}$ if $q\ge 2$.
The damping factor $\delta^{-1+d/q}$ is enough to prove $L^q$ boundedness for
$q<\frac{d+2}2$. Using for example the results in \cite{wo}   this final estimate
can be significantly improved but any such  improvement  seems currently
to have  no
impact on our result, as the main contribution to the weight operator
comes from  the terms $\cW_{q,\delta}^j$.
\end{proof}

\subsubsection*{Elementary  convolution estimates}
The following simple and standard
convolution estimates will be used many times in the paper.

\begin{lemma}\label{prelptwise}
(i)   Let $H(x)=(1+|x|)^{-N}$ and let $N>d$.
Then there is $C_{d,N}>0$ so that for all $x\in \bbR^d$
\Be\sup_{t\ge 1} \sup_{0\le s\le 1} \int t^dH(t y)H(x-sy)\,dy
\le C_{d,N} H(x).\Ee

\noindent (ii) Let  $H^A(x)= |\det A| H(Ax)$. Then  $H^A*H^A(x) \le C_{d,N} H^A(x)$  for all $x\in \bbR^d$.

\noindent (iii) Let $\ell\in \bbN$,  let $h_1$, $h_2$ be kernels with
$$(1+|x|)^\ell |h_1(x)| \,+\, \sum_{|\alpha|=\ell} |\partial^{\alpha} h_2(x)|\le H(x)$$
and assume that
$\int P(x) h_1(x)\,dx=0$ for all polynomials $P$ of degree
$\le  \ell-1$.
Let $h_i^A(x) =|\det A| h_i(Ax)$.
Then for $t\ge 1$
$$\big| t^dh_1^A(t\,\cdot\,) * h_2^A (x)| \le C_{d,N,\ell} \,t^{-\ell}  H^A(x)\,.$$
\end{lemma}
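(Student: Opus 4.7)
Part (i) is the workhorse. Part (ii) is immediate from (i) with $t=s=1$, and part (iii) follows from (i) together with a Taylor-expansion and moment-cancellation argument.

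\textbf{Part (i).} After the change of variables $u = ty$, the inequality becomes
\begin{equation*}
\int H(u)\,H(x-ru)\,du \le C_{d,N}\,H(x), \qquad r := s/t \in [0,1],
\end{equation*}
uniformly in $r$. The case $|x| \le 1$ is trivial, since $H(x) \gtrsim 1$ and the integral is dominated by $\|H\|_1 \lesssim 1$. For $|x| > 1$, I would treat $r \ge 1/2$ and $r < 1/2$ separately. For $r \ge 1/2$, substituting $v = ru$ reduces matters to the standard estimate $(H*H)(x) \lesssim H(x)$, itself proved by splitting $\{|v|\le |x|/2\}$ from its complement (using $N > d$ to integrate the tail). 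For $r < 1/2$ I would split the $u$-integration into four annuli: $\{|u| \le |x|/2\}$, $\{|x|/2 < |u| \le |x|/(4r)\}$, $\{|x|/(4r) < |u| \le 4|x|/r\}$, and $\{|u| > 4|x|/r\}$, balancing the decay of $H(u)$ against that of $H(x-ru)$ on each. On the first annulus $H(x-ru) \lesssim H(x)$ pulls out; on the second, $H(x-ru) \lesssim H(x)$ and the tail bound $\int_{|u|>|x|/2} H(u)\,du \lesssim |x|^{d-N}$ give $\lesssim H(x)|x|^{d-N} \lesssim H(x)$; on the third, $H(u) \lesssim (r/|x|)^N$ while $\int H(x-ru)\,du \lesssim r^{-d}\|H\|_1$; on the fourth, both decays combine to give $r^{N-d}|x|^{d-2N} \lesssim H(x)$. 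The hypothesis $N > d$ is used on every annulus.

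\textbf{Parts (ii) and (iii).} For (ii), the substitution $v = Ay$ yields $H^A * H^A(x) = |\det A|\,(H*H)(Ax)$, which is $\lesssim |\det A|\,H(Ax) = H^A(x)$ by the $t=s=1$ case of (i). For (iii), the scaling $u = Aty$, with $z := Ax$, rewrites
\begin{equation*}
t^d h_1^A(t\,\cdot\,) * h_2^A(x) = |\det A| \int h_1(u)\,h_2(z - u/t)\,du.
\end{equation*}
Since $h_1$ annihilates polynomials of degree $< \ell$, Taylor-expanding $h_2$ around $z$ to order $\ell$ kills the polynomial part and leaves only the integral remainder, of size at most $C_\ell\,t^{-\ell}|u|^\ell \int_0^1 \sum_{|\alpha|=\ell} |\partial^\alpha h_2(z-su/t)|\,ds$. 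Using the hypotheses $|u|^\ell |h_1(u)| \le H(u)$ and $\sum_{|\alpha|=\ell} |\partial^\alpha h_2| \le H$, this is dominated by $C_\ell\,t^{-\ell} \int_0^1\!\!\int H(u)\,H(z - (s/t)u)\,du\,ds$. Since $s/t \in [0,1]$, part (i) bounds the inner integral by $\lesssim H(z)$ uniformly in $s$; integrating in $s$ and multiplying by $|\det A|$ yields the desired $C_{d,N,\ell}\,t^{-\ell}\,H^A(x)$.

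\textbf{Main obstacle.} The substantive difficulty is the four-annulus analysis in part (i) when $r \ll 1$ and $|x| \gg 1$: neither the crude bound $H(u) \lesssim H(x)$ nor $H(x-ru) \lesssim 1$ is individually sharp enough, and since the hypothesis $N > d$ is strict but allows no fixed margin, the two decays must be tracked jointly on the intermediate annulus $|u| \asymp |x|/r$, where the crossover between the two natural scales occurs. Everything else reduces to essentially formal manipulations once (i) is in hand.
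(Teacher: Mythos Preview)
Your proposal is correct and follows essentially the same approach as the paper. For part (i) the paper's decomposition is slightly more economical: working directly in the $y$-variable (without the preliminary substitution $u=ty$ or the case split on $r$), it uses only three regions according to whether $|sy|\le |x|/2$, $|sy|\ge 2|x|$, or $|sy|\in[|x|/2,2|x|]$, the point being that $H(x-sy)\lesssim H(x)$ holds on both outer regions, so your Annuli~1, 2 and 4 collapse to a single estimate $\lesssim H(x)\|t^dH(t\cdot)\|_1$; only the intermediate region (your Annulus~3) requires tracking both decays jointly. Parts (ii) and (iii) are handled identically to your outline.
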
\begin{proof}
Let $F_{t,s}(x)=\int t^dH(t y)H(x-sy)\,dy $ then clearly $F_{t,0}(x)\lc H(x)$ and
for $|x|\le 1$ we have $F_{t,s}(x)\lc 1$ so that
the assertion holds for $|x|\le 1$.

If $|x|\ge 1$ then $\sup_{|sy|\le |x|/2}H(x-sy) \lc H(x)$ and
$\sup_{|sy|\ge 2|x|}H(x-sy) \lc H(x)$, so that
$$
\int_{|sy|\notin [\frac{|x|}{2}, 2|x|]} t^d H(ty)H(x-sy)\,dy \lc
 \|t^d H(t\,\cdot\,)\|_{1}H(x) \lc H(x),
$$ for all $t>0$.

Next, if $|sy|\approx |x|$ then
$t^d H(ty)  \approx t^d H(tx/s)$. Letting
$B_{l,s}(x)=\{y:|sy-x|\le 2^l\}$ for $l\ge 0$, we have $|B_{l,s}(x)|\lc (2^l/s)^d$
and we may estimate (using $N>d$)
\begin{multline*}
\int_{|x|/2\le |sy|\le 2|x|} H(x-sy) t^d H(ty)\,dy\\
\lc  t^{d-N}s^N|x|^{-N}\sum_{l\ge 0}
\int_{B_{l,s}(x)} 2^{-lN} dy \lc t^{d-N}s^{N-d} |x|^{-N} \lc H(x)
\end{multline*}
since $t\ge 1$, $s\le 1$ and $|x|\ge 1$.

\textit{(ii)} follows immediately by a change of variable.
Similarly for $\textit{(iii)}$ we may reduce to the case where $A$ is the identity. Then one can use  Taylor's formula and  the cancellation of $h_1$, and \textit{(i)},  to estimate
\begin{align*}
|t^dh_1(t\,\cdot\,)*h_2(x)|&\lc \int_0^1\frac{(1-s)^{\ell-1}}{(\ell-1)!} \int\big| t^dh_1(ty)
\inn{y}{\nabla }^\ell h_2 (x-sy)\big| \, dy\,ds\,
\\
&\lc t^{-\ell} \sup_{0\le s\le 1}\int t^dH(ty) H(x-sy) \, dy\,\lc t^{-\ell} H(x)\,.
\end{align*}
\end{proof}

\noindent For the kernels in \eqref{cKkernel}, Lemma
\ref{prelptwise} yields the bound
\begin{equation}\label{cKconv}\cK_{t}^{\theta,j}* \cK_{\tau}^{\theta,j}(x) \le C_d
 \cK_{\tau}^{\theta,j}(x)\,,\quad t>\tau\,.\end{equation}

\begin{proof}[\bf Proof of Proposition \ref {maximaltheorem}]
 We  shall use many times that
$\|\cK^{\theta,j}_t\|_1\lc 1$, uniformly in $\theta, j,t$. Moreover,
\Be\label{Kthetajtptwise}
\cK^{\theta,j}_t(x) \approx \cK^{\tilde \theta, \tilde j}_{\tilde t}(x)\,,
\quad |\theta-\tilde \theta|\le C 2^{-j}, \quad |j-\tilde j|\le C,
 \quad C^{-1}\le
 t/\tilde t \le C\,,
\Ee
where the implicit constants in the equivalence depend only on  $C$ and the dimension.
Next, the class $\cS^{\theta,j}_n$ is stable under
small perturbation in the sense that given $A>0$
there is a  constant $C$,   depending only on $A$, $d$ and
 the parameter $\cN$ in the definition \eqref{SNdef},
so that
\Be\label{Sthetajperturb}
\Psi\in \cS^{\theta,j}_n \,\implies \,
C^{-1}\Psi\in \cS^{\tilde \theta,\tilde j}_{\tilde n} \text{ if
$|\theta- \tilde \theta|\le A 2^{-j}$, $|j-\tilde j|\le A$,  $|n-\tilde n|\le A$.}
\Ee

In what follows we let $\Theta_j$ be a maximal $2^{-j-d} $-separated subset
 of $S^{d-1}$.
Clearly $\card(\Theta_j)\lc 2^{j(d-1)}$; moreover each $\tilde \theta\in S^{d-1}$ has distance $\le 2^{-j}$ to at least one $\theta\in \Theta_j$.

\subsubsection*{$L^q$-bounds for  $\cW_{q,\delta,k}^j$}
We replace a $\sup$ in $\theta\in \Theta_j$ by an $\ell^q$ norm and
interchange a summation and integration to
 estimate for any fixed $j$, $k$
\begin{align}
&\Big\|\Big( \sup_{\theta} \cK^{\theta,j}_{2^{k+j}\delta}
*\big|\cM^{\theta,j}_{k-j}
P_{k-j} g\big|^q
\Big)^{1/q}\Big\|_q
\notag
\\
&\lc\Big( \sum_{\theta\in \Theta_j}\iint
\sup_{|\tilde \theta-\theta|\le 2^{-j} }
\cK^{\tilde \theta,j}_{2^{k+j}\delta}(y)
\sup_{|\tilde \theta-\theta|\le 2^{-j} }\big|\cM^{\tilde\theta,j}_{k-j}
P_{k-j} g(x-y)\big|^q dy dx\Big)^{1/q}
\notag
\\
&\lc\,
\Big( \sum_{\theta\in \Theta_j}\big\|
\cM^{\theta,j}_{k-j}
P_{k-j} g\big\|_q^q\Big)^{1/q}\, .
\notag
\end{align}
For the last inequality we have used
\eqref{Kthetajtptwise} and \eqref{Sthetajperturb}.

We now need a further decomposition.
Let $\eta_0$ be supported in $(-1,1)$ so that $\eta_0(s)=1$ for $s\in (-1/2,1/2)$ and let
$$\eta_\theta^{j,k} (\xi)= \eta_0\big(\sqrt{|2^{2j-k}\inn\xi\theta|^2
+|2^{j-k}(\xi-\inn\xi\theta \theta)|^2}\big)\,.
$$
 For $m\ge 0$ define
operators $Q_{m}^{\theta,j,k}$, $P_{m}^{\theta,j,k}$ as follows.
For $m=0$ set
$$\widehat{Q_{0}^{\theta,j,k}f}(\xi)=\widehat{P_{0}^{\theta,j,k} f}(\xi) =
\eta_\theta^{j,k}(\xi)
\widehat f(\xi)$$ and, for $m\ge 1$, set
\begin{align*}
\widehat {Q_m^{\theta,j,k} f}(\xi)&= \big(
\eta_\theta^{j,k}(2^{-m}\xi)-\eta_\theta^{j,k}(2^{-m+1}\xi)\big)
\widehat f(\xi)\, ,
\\
\widehat {P_m^{\theta,j,k} f}(\xi)&=
\big(\eta_\theta^{j,k}(2^{-m}\xi)+\eta_\theta^{j,k}(2^{-m+1}\xi)\big)
\widehat f(\xi)\,.
\end{align*}
Then $ \sum_{m=0}^\infty
Q_m^{\theta,j,k}
P_m^{\theta,j,k} $ is the identity; moreover
$Q_m^{\theta,j,k}
P_{k-j} =0$ for $m>j+3$. Using the cancellation of
$Q_m^{\theta,j,k} $ it is straightforward to derive
the estimate
$$
\sup_{\Psi\in \cS^{\theta,j}_{k-j}}
|Q_m^{\theta,j,k} \Psi(x)|\le C 2^{-2m}\frac{2^{(k-j)d} 2^{-j}}
{(1+ 2^{k-2j}|\inn{x}{\theta}| +2^{k-j}|x-\inn{x}{\theta}\theta|)^{d+1}}\,
$$
from Lemma~\ref{prelptwise}
and \eqref{Psiestim};
in fact if $\cN >d+3$ in \eqref{SNdef} one can also get  a gain of
higher powers of $2^{-m}$.
Now,  for fixed $k$, $j$, $\theta$
\begin{align}\notag
\Big(\sum_{\theta\in \Theta_j}\big\|
\cM^{\theta,j}_{k-j}
P_{k-j} g\big\|_q^q\Big)^{1/q}
&\lc\Big(\sum_{\theta\in \Theta_j}\Big\|
\sup_{\Psi\in \cS^{\theta,j}_{k-j}} \Big|\sum_{0\le m\le j+3} ( Q^{\theta,j,k}_m\Psi) *
(P^{\theta,j,k}_m P_{k-j} g)\Big|\Big\|_q^q\Big)^{1/q}
\\ &\lc \sum_{0\le m\le j+3} 2^{-2m}
\Big(\sum_{\theta\in \Theta_j}\big\|
P^{\theta,j,k}_m
P_{k-j}  g\big\|_q^q\Big)^{1/q}
\label{fixedkjtheta}
\end{align}
and the desired bound follows when we establish  the estimate
\Be\label{overlap}
\Big(\sum_{\theta\in \Theta_j}\Big\|
P^{\theta,j,k}_m
P_{k-j} g
\Big\|_q^q\Big)^{1/q} \lc \max\{ 2^{j\frac{d-2}q}, 2^{j(\frac dq -1)}\}\,2^{m/q} \|g\|_q, \quad 1\le q\le \infty,
\Ee
with the usual modification for $q=\infty$.
To prove  \eqref{overlap} we notice that by interpolation we only
have to verify the cases $q=1,2,\infty$. The cases $q=\infty$ and $q=1$ are immediate
since the operators
$P^{\theta,j,k}_m $ and $P_{k-j}$ have convolution kernels with uniformly bounded
 $L^1$ norms and since $\card(\Theta_j)\lc 2^{j(d-1)}$.
For $q=2$ we use Plancherel's theorem.
Write $\cF[P^{\theta,j,k}_m
P_{k-j} g] = a_{j,k,\theta,m}
\widehat g$. Then the multiplier  $a_{j,k,\theta,m}$
is supported in
$$
\{\,\xi:\ C^{-1}2^{k-j-3}\le|\xi|\le
C2^{k-j+2},\quad |\inn\xi\theta|\le C2^{k-2j+m}\,\}.
$$
Since
 $\Theta_j$ is a  $2^{-j-d}$-separated set
we see that for fixed $j,k,m$  every $\xi\in \bbR^d$  is contained
in no more than $C_d 2^{m+j(d-2)}$ of the sets
$\supp(a_{j,k,\theta,m})$, which implies the bound \eqref{overlap}.

\subsubsection*{$L^q$-bounds for  $\cW_{q,\delta}^j$}
We argue as above and now replace the sup in $k\in \bbZ$, $\theta\in \Theta_j$
by an $\ell^q$ norm.
This yields
\Be\label{nodelta}
\Big\|\Big(\sup_k \sup_{\theta} \cK^{\theta,j}_{2^{k+j}\delta}
*\big|\cM^{\theta,j}_{k-j}
P_{k-j} g(x)\big|^q
\Big)^{1/q}\Big\|_q
\lc\Big(\sum_k \sum_{\theta\in \Theta_j}\big\|
\cM^{\theta,j}_{k-j}
P_{k-j} g\big\|_q^q\Big)^{1/q}\,.
\Ee
A combination of \eqref{fixedkjtheta} and \eqref{overlap} together with the use of the reproducing Littlewood-Paley cutoff $\widetilde P_{k-j}$ yields
\begin{align*}
&\Big(\sum_k \sum_{\theta\in \Theta_j}\Big\|
\cM^{\tilde\theta,j}_{k-j}
P_{k-j} g\Big\|_q^q\Big)^{1/q}
\\&\lc
\sum_{0\le m\le j+3} 2^{-2m}
\Big(\sum_k\sum_{\theta\in \Theta_j}\big\|
P^{\theta,j,k}_m
P_{k-j} \widetilde P_{k-j} w\big\|_q^q\Big)^{1/q}
\\
&\lc
\sum_{0\le m\le j+3} 2^{-m(2-\frac 1q)}
\max\{ 2^{j\frac{d-2}q}, 2^{j(\frac dq -1)}\}
\Big(\sum_k\big\|\widetilde P_{k-j} w\big\|_q^q\Big)^{1/q}\,
\end{align*}
and for $q\ge 2$ this is $\lc 2^{j(d-2)/q}\|w\|_q$.

Now consider the case $q<2$.
By a standard linearization combined with an analytic family argument the claimed $L^q$  estimates can be deduced from the   Hardy-space estimate
 and the already proven $L^2$ estimate. We omit the details of the interpolation argument (\cf. \cite{fest}).
For the Hardy-space estimate
we need to prove an inequality for an $L^2$-atom,
{\it i.e.}
 an  $L^2$ function   $g_\rho$
supported on a ball $\{x:|x|\le \rho\}$ with
$\|g_\rho\|_2\le \rho^{-d/2}$ and $\int g_\rho\,dx=0$.
It suffices to verify
\Be\label{atomest}
\|\cW_{1, \delta}^j g_\rho\|_1\lc 2^{j(d -1)} [\log(\tfrac {1}{\delta})]^{1/2}\,.
\Ee
By the Schwarz inequality we have
$$\cW_{1,\delta}^j g(x) \lc \cW_{2,\delta}^j g(x)$$ and thus by the above $L^2$ estimates
$$
\|\cW_{1,\delta}^j g_\rho\|_{L^1(|x|\le 2 \rho)}
\lc \rho^{d/2}
\|\cW_{2,\delta}^j g_\rho\|_{2} \lc 2^{j(\frac d2 -1)} \rho^{d/2}\|g_\rho\|_2
\lc 2^{j(\frac d2 -1)}.$$
On the complementary set we estimate
$$\|\cW_{1,\delta}^j g_\rho\|_{L^1(|x|\ge 2 \rho)} \lc
\sum_{k\in \bbZ}
\|\cW_{1,\delta,k}^j g_\rho\|_{L^1(|x|\ge 2 \rho)}\,.$$
From our previous bound for $\cW^j_{q,\delta,k}$ and
the cancellation of the atom,
$$
\|\cW_{1,\delta,k}^j g_\rho\|_1
 \lc 2^{j(d-1)}  \|\widetilde P_{k-j}g_\rho\|_1
\lc 2^{j(d-1)} \min \{1, 2^{k-j}\rho\}.
$$
Now if $\Psi\in \cS^{\theta,j}_{k-j}$ then by
\eqref{Psiestim} with $n=k-j$,
\eqref{cKconv},  and $2^{2j}\delta\le 1$,
$$
\cK^{\theta,j}_{2^{k+j}\delta}*
|\Psi| (x) \lc
\cK^{\theta,j}_{2^{k+j}\delta}*\cK^{\theta,j}_{2^{k-j}}(x) \lc
\frac{(2^k\delta)^d 2^{j(d-1)}}
{(1+ 2^k\delta|\inn{x}{\theta}| +2^j 2^k\delta
|x-\inn{x}{\theta}\theta|)^{d+1}}.
$$
Thus a favorable estimate holds for $2^{k}\rho\ge \delta^{-1}$, namely
$$
\|\cW_{1,\delta,k}^j g_\rho\|_{L^1(|x|\ge 2 \rho)}
\lc 2^{j(d-1)} ( 2^{k}\delta\rho)^{-1}.
$$
These estimates can be summed for $2^k\le \rho^{-1}$ and
$2^k\ge \delta^{-1}\rho^{-1}$.
For the intermediate terms we use the $L^1$ bounds and the Schwarz
inequality
  \begin{align*}
\sum_{\rho^{-1}\le 2^k\le (\rho\delta)^{-1}}
\|\cW_{1,\delta,k}^j g_\rho\|_1
 &\lc 2^{j(d-1)}\sum_{\rho^{-1}\le 2^k\le (\rho\delta)^{-1}}
  \|\widetilde P_{k-j}g_\rho\|_1
\\
&\lc 2^{j(d-1)}[\log (\tfrac{1}{\delta})]^{1/2}
 \Big(\sum_k \|\widetilde P_{k-j}g_\rho\|_1^2\Big)^{1/2}\, ,
\end{align*}
and by Minkowski's integral inequality
and the Littlewood-Paley
inequality  on $H^1$ (see {\it e.g.} \cite{gcrdf})
we have
\Be\label{LPHardy}
 \Big(\sum_k \|\widetilde P_{k-j}g_\rho\|_1^2\Big)^{1/2}\le
\Big\| \Big(\sum_k |\widetilde P_{k-j}g_\rho|^2\Big)^{1/2}\Big\|_1
\lc \|g_\rho\|_{H^1} \lc 1.
\Ee
Now  collect the estimates and  \eqref{atomest} is proved.

\subsubsection*{The weak type estimate}
We observe the pointwise estimate
$$
\sup_k \cK^{\theta',j}_{2^{k+j}\delta}
*\big|\cM^{\theta',j}_{k-j}
P_{k-j} g\big|^q
\lc M^{\theta,j} \big[ \sup_k |\cM^{\theta,j}_{k-j}
P_{k-j} g|^q \big], \quad |\theta-\theta'|\le 2^{-j},
$$
where $M^{\theta,j}$ is the maximal operator associated with 
 tubes of eccentricity $2^{-j}$, with the long side pointing in the direction $\theta$.
For each $(\theta,j)$, $M^{\theta,j}$ is a rescaled version of the Hardy--Littlewood maximal function and therefore satisfies the standard weak-type
$(1,1)$ inequality with a bound independent of  $\theta$ and $j$.

We use the continuous embedding $\ell^q(L^{q,\infty})\subset L^{q,\infty}(\ell^\infty)$ (see \eg  Lemma 2.1 in~\cite{gs}) and dominate
\begin{align}\Big\| \sup_{0<\delta<1/2} |\cW_{q,\delta}^j g|\Big\|_{L^{q,\infty}}
&\lc \Big(\sum_{\theta\in \Theta_j} \Big\| \big(
M^{\theta,j} \big[ \sup_k |\cM^{\theta,j}_{k-j}
P_{k-j} g|^q \big]\big)^{1/q}\Big\|_{L^{q,\infty}}^q\Big)^{1/q}
\notag
\\
&\lc\,\Big(\sum_{\theta\in \Theta_j} \big\|
\sup_k |\cM^{\theta,j}_{k-j}
P_{k-j} g|\, \big\|_{L^q}^q\Big)^{1/q}\, ,
\label{Lqexpression}
\end{align}
and for the last inequality  we have used
 the uniform weak-type $(1,1)$ bounds for the
 $M^{\theta,j}$ together with the identity
$\|F^q\|_{L^{1,\infty}} = \|F\|_{L^{q,\infty}}^q$ for the usual quasinorms.

We may dominate  \eqref{Lqexpression} by
$$
\Big(\sum_{\theta\in \Theta_j}\sum_k \big\|
\cM^{\theta,j}_{k-j}
P_{k-j} g\big\|_{q}^q\Big)^{1/q}
$$
which for $q\ge 2$ has already been estimated by $2^{j(d-2)/q}\|g\|_q$.
Thus the asserted estimate follows in this range.

For $1<q<2$ we claim that
$$\Big(\sum_{\theta\in \Theta_j} \big\|
\sup_k |\cM^{\theta,j}_{k-j}
P_{k-j} g|\, \big\|_{q}^q\Big)^{1/q} \lc 2^{j(\frac dq -1)} (1+j)^{1/q-1/2} \|g\|_q.
$$
This follows by complex  interpolation from the estimate for
$q=2$ already proved above and an
$H^1\to \ell^1(L^1(\ell^\infty))$ bound. Again it suffices to consider
 an $L^2$-atom $g_\rho$ supported on a ball  of radius $\rho$
centered at the origin and we need to check
$$\sum_{\theta\in \Theta_j} \big\|
\sup_k |\cM^{\theta,j}_{k-j}
P_{k-j} g_\rho
|\, \big\|_1 \lc 2^{j(d -1)} (1+j)^{1/2}.
$$
Now on the ball of radius $2\rho$ we use an $L^2$ estimate and the trivial estimation $|\cM^{\theta,j}_{k-j} f|
\lc M^{\theta,j}(f)$ to obtain
\begin{align*}
&\sum_{\theta\in \Theta_j} \big\|
\sup_k |\cM^{\theta,j}_{k-j}
P_{k-j} g_\rho|\, \big\|_{L^1(|x|\le 2\rho)}
\,\lc \,\rho^{d/2} \sum_{\theta\in \Theta_j} \big\|
\sup_k |\cM^{\theta,j}_{k-j}
P_{k-j} g_\rho|\, \big\|_{2}
\\
&\lc
\rho^{d/2} \sum_{\theta\in \Theta_j} \Big\|
\Big(\sum_k (M^{\theta,j}(
P_{k-j} g_\rho))^2\Big)^{1/2} \Big\|_{2}
\\
&\lc
\rho^{d/2} \sum_{\theta\in \Theta_j} \Big(\sum_k \|P_{k-j} g_\rho\|_{2}^2\Big)^{1/2}
\lc 2^{j(d-1)}
\rho^{d/2}  \|g_\rho\|_2 \,\lc \,2^{j(d-1)}\,.
\end{align*}
For $|x|\ge 2\rho$ we replace the sup in $k$ by the sum.
By standard $L^1$ estimates and using the cancellation of the atom
 we have
$$ \big\|
 \cM^{\theta,j}_{k-j}
P_{k-j} g_\rho\big\|_1 \lc
\|P_{k-j} g_\rho\big\|_1 \lc
\min \{2^{k-j}\rho, 1\}
$$
and, using estimates of  the kernels,
$$ \big\|  \cM^{\theta,j}_{k-j}
P_{k-j} g_\rho\big\|_{L^1(|x|\ge 2\rho)} \lc (2^{k-2j}\rho)^{-1}\quad \text{ if }\ 2^k\rho>2^j
\,.
$$
Thus $\sum_{\theta\in \Theta_j}\sum_{2^k\rho \notin [2^j, 2^{2j}]}
\big\|
 \cM^{\theta,j}_{k-j}
P_{k-j} g_\rho \big\|_{L^1(|x|\ge 2\rho)}
\lc 2^{j(d-1)}.$
Moreover, for the intermediate terms,
$$\sum_{\theta\in \Theta_j}\sum_{2^k\rho \in
[ 2^j, 2^{2j}] }
\big\|
 \cM^{\theta,j}_{k-j}
P_{k-j} g_\rho\, \big\|_{1}
\lc 2^{j(d-1)}\sum_{2^k\rho \in  [ 2^j, 2^{2j}]}\|P_{k-j} g_\rho\|_{1}
\lc 2^{j(d-1)}(1+j)^{1/2},$$
by the argument in \eqref{LPHardy}.
 We combine these estimates  and the $L^1$ bound is proved.
\end{proof}

\section{Multipliers and the bilinear adjoint restriction theorem}
\label{biladjrestr}

In this section we prove bilinear estimates for multiplier transformations,
under suitable separation conditions. The proofs rely on

\medskip

\noi{\bf Tao's bilinear adjoint restriction theorem
(\cite{ta},~\cite{le-tr}).} {\it Let $b>1/2$ and $p>2+4/d$.
There exist $\eps_{\!\circ}>0$,
 $N_\circ\in \bbN$ and $C$,
 depending on $b$, $p$ and  $d$,
that for all functions $h$ defined on $[-b,b]^{d-1}$ and satisfying
\begin{equation}\sup_{\omega\in [-b,b]^{d-1}}
\max_{\substack{|\alpha|\le N_\circ\\i=1,2}}
 |\partial^\alpha_\omega h(\omega)|\le \eps_{\!\circ}
\end{equation}
the following holds:
For all pairs of functions $(F_1,F_2)$  with
$\dist (\supp(F_1), \supp (F_2))\ge 1/2$  and  $F_i\in L^2([-b,b]^{d-1})$,}
\Be\label{bilextension} \Big(\int \Big| \prod_{i=1,2}
\int_{[-b,b]^{d-1}} F_i(\omega)
\exp\big(\im \inn{x'}{\omega} + \im x_d(|\omega|^2/2+h(\omega))\big)
d\omega\Big|^{p/2}dx\Big)^{2/p} \lc  \prod_{i=1,2}
\big\|F_i\big\|_{L^2}. \Ee

We will need to  consider  families of hypersurfaces which depend
on a parameter $s$ and which,   for fixed $s$, are  small perturbations of the paraboloid $\xi_d=|\xi'|^2/2$, where $\xi'=(\xi_1,\dots, \xi_{d-1})$.
These lead to ``elliptic'' phase-functions as considered in~\cite{tavave},~\cite{ta}.

\medskip

\noi{\bf Definition.} We denote by $\Ellip(b,\eps,N_\circ)$
the class of functions $(\xi',s)\mapsto \gamma(\xi',s)$ defined on
$[-b,b]^{d-1}\times (-1,1)$ which are of the form
$$\gamma(\xi',s)= \frac{|\xi'|^2}{2} -s +h(\xi',s)
,$$
with
\begin{equation}\sup_{\substack{\omega\in [-b,b]^{d-1}\\ s\in (-1,1)}}
\max_{\substack{|\alpha|\le N_\circ\\i=1,2}}
 |\partial^\alpha_{\omega,s} h(\omega,s)|\le \eps.
\end{equation}
We may and shall assume in what follows that $N_\circ$ is large, say
$N_\circ>10d$.

We now consider  Fourier multipliers depending on a parameter $s\in (-1,1)$,
supported in a tubular neighborhood of $(\xi',\gamma(\xi',s))$.

\begin{lemma} \label{LpL2} Let
$p> 2+ 4/d$, $b>1/2$. There are $\eps$, $N_\circ$, depending on $d,$ $b$, and $p$, so that the following holds for all $\delta_\circ<1/2$.

 Let $\gamma \in  \Ellip(b,\eps,N_\circ)$.
For $|s|\le 1$ and $i=1,2$, let $a_i(\cdot,s)$ be multipliers,
supported on $[-b,b]^{d-1}$, satisfying the conditions
\begin{subequations}\label{ai-assumpt}
\Be \label{bdassumpt}
|a_i(\xi,s)|\le 1\,,
\Ee
\Be \label{aisupport}
a_i(\xi,s)=0\quad \text{ if }\ |\xi_d-\gamma(\xi',s)|\ge \delta_\circ,
\Ee
and
\Be\label{aiseparate-assumpt}
(\xi',\xi_d)\in \supp a_1(\,\cdot\,,s),\quad
(\widetilde\xi',\widetilde \xi_d)\in \supp a_2(\,\cdot\,,s) \,\implies\,
|\xi'-\widetilde \xi'|\ge 1\,.
\Ee
\end{subequations}
Then,  for all pairs of $L^2$ functions $(f_1,f_2)$,
\Be \label{LpL2bilmult}
\Big\| \int_{-1/2}^{1/2} \prod_{i=1,2} a_i(D,s) f_i \,ds\Big\|_{p/2} \lc \delta_\circ^2 \prod_{i=1,2}\|f_i\|_2\,.
\Ee
\end{lemma}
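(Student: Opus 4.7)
The plan is to decompose each $a_i(D,s)f_i$ along the normal direction to the surface $\xi_d=\gamma(\xi',s)$, so that the bilinear operator becomes, for each fixed $s$, a superposition of products of Tao-type extension operators, and then to exploit the $\delta_\circ$-thickness in both the normal direction and the parameter $s$.

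\emph{Step 1 (Fourier inversion in the normal direction).} For each fixed $s$, the support condition \eqref{aisupport} permits the change of variable $\xi_d=\gamma(\xi',s)+\zeta$ with $|\zeta|\le\delta_\circ$. Writing
\[
a_i(D,s)f_i(x)=(2\pi)^{-d}\int_{|\zeta|\le\delta_\circ} e^{ix_d\zeta}\, E_sG_i(\cdot,\zeta,s)(x)\,d\zeta,\qquad
E_sg(x):=\int g(\xi')e^{i(x'\cdot\xi'+x_d\gamma(\xi',s))}d\xi',
\]
with $G_i(\xi',\zeta,s):=a_i(\xi',\gamma(\xi',s)+\zeta,s)\widehat f_i(\xi',\gamma(\xi',s)+\zeta)$. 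Since $\gamma(\xi',s)=|\xi'|^2/2-s+h(\xi',s)$, the phase of $E_s$ differs from the Tao phase only by the unit-modulus factor $e^{-ix_ds}$, and the small derivative bound on $h(\cdot,s)$ built into $\mathrm{Ellip}(b,\eps,N_\circ)$ (with $\eps\le\eps_\circ$ and $N_\circ$ taken as in Tao's theorem) puts $E_s$ in the hypotheses of the bilinear extension theorem. Together with the separation hypothesis \eqref{aiseparate-assumpt}, which gives separation $\ge 1>1/2$, this yields
\[
\bigl\|E_sG_1(\cdot,\zeta_1,s)\,E_sG_2(\cdot,\zeta_2,s)\bigr\|_{p/2}\lc \prod_{i=1,2}\|G_i(\cdot,\zeta_i,s)\|_{L^2(d\xi')}.
\]

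\emph{Step 2 (Minkowski and Cauchy--Schwarz).} Inserting the decomposition into the bilinear product and integrating in $s$,
\[
\int_{-1/2}^{1/2}\!\!a_1(D,s)f_1\cdot a_2(D,s)f_2\,ds
=(2\pi)^{-2d}\!\!\int_{-1/2}^{1/2}\!\!\iint_{|\zeta_i|\le\delta_\circ}\!\!e^{ix_d(\zeta_1+\zeta_2)}\prod_{i=1,2}E_sG_i(\cdot,\zeta_i,s)\,d\zeta_1d\zeta_2ds.
\]
Minkowski's inequality and Step 1 give
\[
\|T(f_1,f_2)\|_{p/2}\lc \int_{-1/2}^{1/2}\iint_{|\zeta_i|\le\delta_\circ}\prod_{i=1,2}\|G_i(\cdot,\zeta_i,s)\|_2\,d\zeta_1d\zeta_2\,ds.
\]
Applying Cauchy--Schwarz in each $\zeta_i$ extracts a factor $\delta_\circ^{1/2}$ per index, and a further Cauchy--Schwarz in $s\in[-1/2,1/2]$ yields
\[
\|T(f_1,f_2)\|_{p/2}\lc \delta_\circ\,\prod_{i=1,2}\Big(\int_{-1/2}^{1/2}\int_{|\zeta_i|\le\delta_\circ}\|G_i(\cdot,\zeta_i,s)\|_2^2\,d\zeta_i ds\Big)^{1/2}.
\]

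\emph{Step 3 (Plancherel and thin-fiber gain in $s$).} Undoing the change of variable $\xi_d=\gamma(\xi',s)+\zeta_i$ in each factor and using Fubini,
\[
\int_{-1/2}^{1/2}\int_{|\zeta_i|\le\delta_\circ}\|G_i(\cdot,\zeta_i,s)\|_2^2\,d\zeta_ids
=\int |\widehat f_i(\xi)|^2\Big(\int_{-1/2}^{1/2}|a_i(\xi,s)|^2ds\Big)d\xi.
\]
Because $|\partial_s\gamma(\xi',s)|=|{-1}+\partial_sh(\xi',s)|\ge 1/2$ (for $\eps$ small), the $s$-set $\{s:|\xi_d-\gamma(\xi',s)|\le\delta_\circ\}$ has measure $\lc \delta_\circ$ for every $\xi$, and $|a_i|\le 1$ gives $\int|a_i(\xi,s)|^2ds\lc\delta_\circ$. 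Therefore
\[
\int_{-1/2}^{1/2}\int_{|\zeta_i|\le\delta_\circ}\|G_i(\cdot,\zeta_i,s)\|_2^2\,d\zeta_ids\lc\delta_\circ\|f_i\|_2^2,
\]
and combining with Step 2 produces the claimed bound $\delta_\circ^2\|f_1\|_2\|f_2\|_2$.

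\emph{Main obstacle.} The key point is squeezing out the full power $\delta_\circ^2$: each of the two applications of Cauchy--Schwarz (one across the normal fiber $\zeta_i$, one across the parameter $s$) contributes a factor $\delta_\circ^{1/2}$ per function, and one must verify that the transversality $|\partial_s\gamma|\gtrsim 1$ from the $\mathrm{Ellip}$ class is what reduces the $s$-support at each $\xi$ to measure $\delta_\circ$. Verifying that the smallness parameter $\eps$ in $\mathrm{Ellip}(b,\eps,N_\circ)$ may simultaneously be taken small enough to invoke Tao's theorem for every $s$ and to guarantee $|\partial_s\gamma|\ge 1/2$ is routine but sets the admissible $\eps,N_\circ$ in the statement.
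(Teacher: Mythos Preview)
Your proof is correct and follows essentially the same approach as the paper's: foliate the multipliers over the surfaces $\xi_d=\gamma(\xi',s)+\tau$, apply Minkowski and Tao's bilinear extension theorem at each fixed $(s,\tau^1,\tau^2)$, and then use Cauchy--Schwarz together with the transversality $|\partial_s\gamma|\approx 1$ from the $\Ellip$ class to extract $\delta_\circ^2$. The only cosmetic difference is the bookkeeping of the $\delta_\circ$ factors --- the paper applies Cauchy--Schwarz in $s$ alone and then picks up $\delta_\circ^2$ from the outer $d\tau^1\,d\tau^2$ integral, whereas you apply Cauchy--Schwarz in the $\zeta_i$ first and recover the remaining $\delta_\circ$ from the thin $s$-fiber; both routes rest on the same Jacobian computation.
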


\begin{proof} For fixed $s$ we introduce coordinates
 $$\xi=\Gamma^s(\xi',\tau):=(\xi', \gamma(\xi',s)+\tau)$$
in the Fourier integral.
We then need to  estimate the $L^{p/2}$ norm of
 %
 %
$$
\int_{-1/2}^{1/2} \prod_{i=1,2} \biggl[ \int_{-\delta_\circ}^{\delta_\circ}
\int_{[-b,b]^{d-1}}
[a_i\widehat f_i](\Gamma^s({\xi^{i}}',\tau^i))
e^{\im(\inn{x'}{{\xi^i}'}+ x_d( \gamma({{\xi^{i}}'},s)+\tau^i))} d{\xi^{i}}'
d\tau^i\biggr] ds;
$$
here we denote by $({\xi^{i}}',\tau^i)$
the variables  in the two different copies of $\bbR^d$.
By  Minkowski's integral inequality the $L^{p/2}$ norm is dominated by
$$
\iiint\limits_{[-\delta_\circ,\delta_\circ]^2\times [-\frac 12,\frac 12]}
 \Big(\int \Big| \prod_{i=1,2}\Big[\int\limits_{[-b,b]^{d-1}}
 [a_i
\widehat  f_i](\Gamma^s({\xi^{i}}',\tau^i))
e^{\im(\inn{x'}{{\xi^i}'}+x_d \gamma(\xi^{i},s))}
d\xi^i\Big]
\Big|^{p/2} dx\Big)^{2/p}ds \,d\tau^1d\tau^2.
$$
By the bilinear adjoint restriction theorem and the boundedness of $a_i$
this is estimated by
$$\iiint\limits_{[-\delta_\circ,\delta_\circ]^2\times [-\frac 12,\frac 12]}
\prod_{i=1,2}
\Big(\int_{[-b,b]^{d-1}}\big| \widehat f_i(\Gamma^s({\xi^{i}}',\tau^i))
\big|^2 d{\xi^i}'\Big)^{1/2} ds\,d\tau^1 d\tau^2.
$$
We apply the Schwarz inequality in the $s$ variable. Then
 for
fixed $\tau^1$, $\tau^2$, we
 change
 variables $(\xi',s)\mapsto \xi= (\xi',  \gamma(\xi',s)+\tau^i)$, using that
 $\partial_s \gamma(\xi',s)= 1+O(\eps)$.
Thus the last displayed expression is estimated by
\begin{multline*}
 \iint\limits_{[-\delta_\circ,\delta_\circ]^2}
\prod_{i=1,2}
\Big(\int\limits_{[-b,b]^{d-1}\times[-1,1]} \big |\widehat f({\xi^i}',\gamma({\xi^i}',s)+\tau^i)\big|^2 ds\,
d {\xi^i}'\Big)^{1/2} d\tau^1d\tau^2
\\
\lc   \iint\limits_{[-\delta_\circ,\delta_\circ]^2}
 \prod_{i=1,2}\|\widehat f_i\|_2\,d\tau^1d\tau^2
\lc \, \delta_\circ^2 \prod_{i=1,2}\| f_i\|_2\,.
\end{multline*}
\end{proof}

In what follows
we will use the notation
$\prodstartext[\cE_i] = \cE_1\overline{\cE_2}$
for products involving a
complex conjugate.

\begin{proposition} \label{bilweight} Let $q\in[1,\frac{d+2}{2})$ and  $b>1/2$. There are
$\eps$,  $N_\circ$, depending on $d,$ $b$ and $q$, so that the following holds for all $\delta_\circ<1/2$.

Let $\gamma \in  \Ellip(b,\eps,N_\circ)$. For $|s|\le 1$ and
$i=1,2$, let $a_i(\cdot,s)$ be multipliers, supported on
$[-b,b]^{d-1}$, satisfying the conditions
\begin{subequations}\label{mi-assumpt}
\Be \label{diffassumpt}
|\partial_{\xi}^\alpha a_i(\xi,s)|\le \delta_\circ^{-|\alpha|}, \quad |\alpha|\le
d+2,
\Ee
\Be\label{ai-support-assumption}
a_i(\xi,s)=0 \quad \text{ if }\ |\xi_d-\gamma(\xi',s)|\ge \delta_\circ,
\Ee
and
\Be\label{separate-assumpt}
(\xi',\xi_d)\in \supp a_1(\,\cdot\,,s),\quad
(\widetilde\xi',\widetilde \xi_d)\in \supp a_2(\,\cdot\,,s) \,\implies\,
|\xi'-\widetilde \xi'|\ge 1\,.
\Ee
\end{subequations}
Then
\begin{multline}\label{bilinearweight}
\Big|\int_{\bbR^d} \int_{-1/2}^{1/2}
\prodstar[a_i(D,s)f_i(x)]
w(x) \,dx\,ds \Big|
\\ \le C_N \delta_\circ^{2}
\prod_{i=1,2} \Big(\int|f_i(x)|^2 \Big(\int \frac{|w(x-y)|^q}
{(1+\delta_\circ|y|)^{(d+1)q}}dy\Big)^{1/q} dx\Big)^{1/2}\,.
\end{multline}

\end{proposition}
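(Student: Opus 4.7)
The plan is to reduce the weighted bilinear bound to the unweighted $L^{p/2}$ estimate of Lemma~\ref{LpL2} by a spatial decomposition at scale $\delta_\circ^{-1}$, the natural scale dictated by the Fourier-side thickness $\delta_\circ$ of $\supp a_i(\,\cdot\,,s)$.  Denote the bilinear form as $B(f_1,f_2)(x)=\int_{-1/2}^{1/2}\prodstartext[a_i(D,s)f_i(x)]\,ds$, and take $p=2q'$ so that $(p/2)'=q$.  The hypothesis $q\in(1,\tfrac{d+2}{2})$ then yields $p\in(2+\tfrac{4}{d},\infty)$, the range in which Lemma~\ref{LpL2} applies; the endpoint $q=1$ is handled by taking $p$ sufficiently large, at the cost of a larger constant $C_N$.

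Fix a nonnegative Schwartz function $\zeta$ with $\sum_{k\in\Z^d}\zeta(u-k)\equiv 1$, let $x_Q$ range over the lattice $\delta_\circ^{-1}\Z^d$, and set $\zeta_Q(x)=\zeta(\delta_\circ(x-x_Q))$ together with slowly-decaying weights $\phi_Q(x)=(1+\delta_\circ|x-x_Q|)^{-N}$ for a sufficiently large $N$.  The derivative bound~\eqref{diffassumpt} and the support condition~\eqref{ai-support-assumption} yield, by repeated integration by parts, the kernel estimate $|\check a_i(x,s)|\lc C_N\delta_\circ(1+\delta_\circ|x|)^{-N}$.  Consequently, for $x\in\supp\zeta_Q$,
\[a_i(D,s)f_i(x)=a_i(D,s)\bigl[\phi_Q^{1/2}f_i\bigr](x)+R_{i,Q}(x,s),\]
where the error $R_{i,Q}$ is pointwise dominated by a Schwartz convolution of $|f_i|$ against a kernel localized near $x_Q$ on scale $\delta_\circ^{-1}$; the contribution of $R_{i,Q}$ will be of the same shape as the main term and can be absorbed.

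Writing $\int B(f_1,f_2)\,w\,dx=\sum_Q\int B(f_1,f_2)\,\zeta_Q w\,dx$, making the localization substitution, applying H\"older with exponents $p/2,q$, and invoking Lemma~\ref{LpL2} gives
\[\Big|\int B(\phi_Q^{1/2}f_1,\phi_Q^{1/2}f_2)\,\zeta_Q w\,dx\Big|\lc\delta_\circ^{2}\,\|\phi_Q^{1/2}f_1\|_2\,\|\phi_Q^{1/2}f_2\|_2\,\|\zeta_Q w\|_q.\]
Summing over $Q$ and using Cauchy--Schwarz then yields
\[\Big|\int B(f_1,f_2)\,w\,dx\Big|\lc\delta_\circ^{2}\prod_{i=1,2}\Big(\int|f_i(x)|^2\sum_Q\phi_Q(x)\,\|\zeta_Q w\|_q\,dx\Big)^{1/2}.\]

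It remains to verify the pointwise bound
\[\sum_Q\phi_Q(x)\,\|\zeta_Q w\|_q\lc\Big(\int\frac{|w(x-y)|^q}{(1+\delta_\circ|y|)^{(d+1)q}}\,dy\Big)^{1/q},\]
which follows from the Schwartz decay of $\zeta$: bound $\zeta(\delta_\circ(y-x_Q))^q\le C_M(1+\delta_\circ|y-x_Q|)^{-M}$, use the triangle inequality to compare $|y-x_Q|$ with $|x-y|$ and $|x-x_Q|$, and absorb a remaining power of $(1+\delta_\circ|x-x_Q|)$ into $\phi_Q(x)$ by choosing $N$ large enough so that $\sum_Q(1+\delta_\circ|x-x_Q|)^{-M'}$ converges.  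The main technical obstacle is the kernel-decay-based localization in the second paragraph; the error bookkeeping there is delicate but standard, after which the rest of the argument is pure H\"older--Cauchy--Schwarz.
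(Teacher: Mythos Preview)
Your overall strategy --- localize the weight at scale $\delta_\circ^{-1}$, apply H\"older with exponents $(q',q)$, invoke Lemma~\ref{LpL2}, and sum over cubes by Cauchy--Schwarz --- is exactly the paper's approach.  The final pointwise estimate for $\sum_Q\phi_Q(x)\|\zeta_Qw\|_q$ is also correct.  The gap is in the localization step you flag as ``delicate but standard'': it is in fact the entire content of the proposition, and your treatment of it does not work.

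The error $R_{i,Q}=a_i(D,s)\bigl[(1-\phi_Q^{1/2})f_i\bigr]$ is \emph{not} small.  On $\supp\zeta_Q$ it is pointwise comparable to the main term (since $1-\phi_Q^{1/2}(y)$ is bounded below by a positive constant as soon as $\delta_\circ|y-x_Q|\gtrsim 1$, exactly where most of the kernel mass sits).  If instead you feed $R_{i,Q}$ back into Lemma~\ref{LpL2}, you obtain the global norm $\|(1-\phi_Q^{1/2})f_i\|_2\approx\|f_i\|_2$, which has no locality in $Q$; the resulting sum $\sum_Q\|f_1\|_2\,\|\phi_Q^{1/2}f_2\|_2\,\|\zeta_Qw\|_q$ diverges.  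Neither route ``absorbs'' the error.

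The paper resolves this not by localizing $f_i$ but by dyadically decomposing the \emph{kernel}: write $\check a_i=\sum_{j\ge 0}\Phi_j\,\check a_i$ with $\Phi_j$ a compactly supported radial cutoff to $\{|x|\sim 2^j\delta_\circ^{-1}\}$.  Each piece has genuinely compact spatial support, so on a grid of cubes of sidelength $2^{\max(j_1,j_2)}\delta_\circ^{-1}$ the localization $f_i\mapsto\chi_{Q^*}f_i$ is \emph{exact}, with no error term.  On the Fourier side this reads $\int a_i(D-\eta,s)f_i\,\widehat\Phi_j(\eta)\,d\eta$; the modulation identity $a_i(D-\eta,s)=\Mod_\eta\,a_i(D,s)\,\Mod_{-\eta}$ lets one apply Lemma~\ref{LpL2} with the \emph{original} multipliers (so the separation hypothesis~\eqref{separate-assumpt} is preserved), acting on modulated inputs whose $L^2$ norms are unchanged.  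The sum over $j\ge 1$ is then controlled by pairing the vanishing moments of $\Phi_j$ against a Taylor expansion of $a_i(\xi-\eta,s)$ in $\eta$: the derivative bound~\eqref{diffassumpt} costs $\delta_\circ^{-L}$, while $\int|\eta|^L|\widehat\Phi_j(\eta)|\,d\eta\lesssim(2^{-j}\delta_\circ)^L$, giving a net gain of $2^{-jL}$ with $L=d+2$.  This is the mechanism that makes the tails summable and produces the weight $(1+\delta_\circ|y|)^{-(d+1)q}$; your argument, which uses~\eqref{diffassumpt} only for isotropic kernel decay, has no substitute for it.
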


\begin{proof}

We dyadically decompose  the kernel of the convolution operators. Let
$\rho_{0}$ be a  $C^\infty_c(\bbR)$ function supported on $(-1,1)$ and equal to one on $[-1/2,1/2]$ and define, for $x\in \bbR^d$,
$$\Phi_0(x)= \rho_0(\delta_\circ| x|), \qquad \Phi_j(x)=
\rho_0(2^{-j}\delta_\circ|x|)-\rho_0(2^{1-j}\delta_\circ|x|),\quad j\ge 1.$$
Then the $\{\Phi_j\}_{j=0}^\infty$ form a radial partition of unity.
We thus need to bound the sum
$$\sum_{j_1, j_2\ge 0}
\Big|\int_{\bbR^d} \int_{-1/2}^{1/2} \prodstar\Big[
\int a_i(D-\eta^i,s)f_i(x) \widehat \Phi_{j_i}(\eta^i)\,d\eta^i
\Big]
w(x)\,dx\,ds \Big|.
$$
By symmetry considerations it suffices to consider  the terms with
$0\le j_1\le j_2$.
The desired estimate then follows if we can show that
\begin{multline}\label{fixedjweightedbd}
\Big|\int_{\bbR^d} \int_{-1/2}^{1/2}
 \prodstar\Big[
\int a_i(D-\eta^i,s)f_i(x) \widehat \Phi_{j_i}(\eta^i)\,d\eta^i
\Big]ds\,
w(x)\,dx \Big|
\lc \\ C(L)
\delta_\circ^2 2^{-j_2 L} \prod_{i=1,2} \Big(\int|f_i(x)|^2 \Big(\int_{|y|\le C
2^{j_2}\delta_\circ^{-1}} |w(x+y)|^q dy\Big)^{1/q}dx\Big)^{1/2}
\end{multline}
for
$0\le j_1\le j_2$, $L\le d+2$. We shall first verify this inequality for $L=0$ and then provide the modification for $0<L\le d+2$.

We now form a grid $\fQ(j_2)$ of dyadic cubes of sidelength $2^{j_2}\delta_\circ^{-1}$.
For every $Q\in \fQ(j_2)$ let $Q^*$ be the double cube with same center as $Q$.
By the support properties of the kernels, and $j_1\le j_2$,  we have
$$
\int a_i(D-\eta^i,s)[f_i\chi_{\bbR^d\setminus Q^*}](x) \widehat
\Phi_{j_i}(\eta_i)\,d\eta_i \,=0\quad \text{ if } x\in Q, \quad
Q\in \fQ(j_2), \quad i=1,2.$$ Thus the left hand side of
\eqref{fixedjweightedbd} is equal to \Be\label{Qlocalizationdone}
\Big|\sum_{Q\in \cQ(j_2)}\int_{Q} \int_{-1/2}^{1/2}
\prodstar\Big[\int a_i(D-\eta^i,s)[\chi_{Q^*}f_i](x) \widehat
\Phi_{j_i} (\eta^i)\,d\eta^i\Big] ds\,w(x)\,dx \Big|\,. \Ee We use
the formula $m(D-\eta)f= \Mod_\eta m(D) [\Mod_{-\eta }f]$ where
$\Mod_\eta g(x) = g(x)\,e^{\im\inn{x}{\eta}}.$ In order to obtain
\eqref{fixedjweightedbd} for $L=0$ (which is efficient for $j_2=0$)
we use H\"older's inequality to estimate \eqref{Qlocalizationdone}
by \Be \label{nachHoelder} \iint |\widehat \Phi_{j_1}(\eta^1)
\widehat \Phi_{j_2}(\eta^2)| \sum_{Q\in \fQ(j_2)} \Big\|
\int_{-1/2}^{1/2} \prodstar\Big[
a_i(D,s)\Mod_{-\eta^i}[\chi_{Q^*}f_i]\Big]\Big\|_{q'}
\|w\chi_Q\|\ci{q} d\eta^1d\eta^2\,. \Ee Now
\begin{align*}
&\Big\| \int_{-1/2}^{1/2}
\prodstar\Big[
a_i(D,s)\Mod_{-\eta^i}[\chi_{Q^*}f_i]\Big]\,ds\Big\|_{q'}\,
\\
\le\ & \int_{-1/2}^{1/2}
\Big\|
\prodstar\Big[
a_i(D,s)\Mod_{-\eta^i}[\chi_{Q^*}f_i]\Big]\Big\|_{q'}\,ds
\\=\ & \int_{-1/2}^{1/2}
\Big\|
\prod_{i=1,2}
a_i(D,s)\Mod_{-\eta^i}[\chi_{Q^*}f_i]\Big\|_{q'}\,ds\,.
\end{align*}
By our assumption on $q$ we have $q'=p/2$ for some  $p>2+4/d$ and therefore
we can
use Lemma ~\ref{LpL2} to
bound  the last displayed expression by
$$ \delta_\circ^2
\prod_{i=1,2}\|f_i\chi_{Q^*}\|_2\,.$$

Since the $L^1$ norms of $\widehat \Phi_{j_1}$ are $O(1)$, uniformly in $\delta_\circ$,
we have
\begin{align*}
\text{\eqref{Qlocalizationdone}}\,&\lc\,\delta_\circ^2 \sum_{Q\in \fQ(j_2)}\|f_1\chi_{Q^*}\|_2 \|f_2\chi_{Q^*}\|_2 \|w\chi_Q\|_q
\\&\lc \delta_\circ^2 \prod_{i=1,2}
\Big(\sum_{Q\in \fQ(j_2)}\|f_i\chi_{Q^*}\|_2^2
 \|w\chi_Q\|_q\Big)^{1/2}
\\ &\lc \delta_\circ^2 \prod_{i=1,2}
\Big(\int_{Q^*}|f_i(x)|^2\Big(\int_{|y|\le C2^{j_2}\delta_\circ^{-1}}|w(x+y)|^q dy\Big)^{1/q}dx\Big)^{1/2}
\end{align*}
which yields
\eqref{fixedjweightedbd} for $L=0$.

We now turn to the case $L\le d+2$  where we need to improve the above estimate
by a factor
of $C(L) 2^{-j_2L}$.
We expand the convolution $\int
a_2(\xi-\eta,s)\widehat\Phi_{j_2}(\eta)\, d\eta$ by a Taylor
expansion about $\eta=0$. Since $\Phi_{j_2}$ vanishes in a
neighborhood of the origin the integrals $\int \widehat \Phi_{j_2}
(\eta) P(\eta)\, d\eta$ are zero for any polynomial $P$. Thus only
the integral remainder term in the Taylor expansion survives and we
obtain
\begin{align*}a_2(\,\cdot\,,s)*\widehat{\Phi}_{j_2}(\xi) =
\int_{0}^1 \frac{(1-\sigma)^{L-1}}{(L-1)!} \int \widehat{\Phi}_{j_2}
(\eta)\inn{-\eta}{\nabla_\xi}^L a_2(\xi-\sigma\eta,s)\, d\eta
\,d\sigma\,.
\end{align*}
We repeat the above argument in which we now  have
to bound
\begin{multline*}
\int_{\eta^1}\int_{\eta^2}\int_{0}^1
\big|\widehat \Phi_{j_1}(\eta^1)\,d\eta^1\big|\,
\big|\widehat \Phi_{j_2}(\eta^2)\,d\eta^2\big|\,
\sum_{Q\in \cQ(j_2)}\int_{x\in Q} |w(x)| \quad \times \\
\Big| \int_{-1/2}^{1/2}
 a_1(D-\eta^1,s)[\chi_{Q^*}f_1](x) \overline{ \inn{-\eta^2}{\nabla}^L
a_2(D-\sigma\eta^2,s)[\chi_{Q^*}f_2](x)} \,ds\Big| \,
dx \, d\sigma\,d\eta^1d\eta^2
\end{multline*}
in place of \eqref{Qlocalizationdone}.
In the estimate we may replace $\inn{-\eta^2}{\nabla}^L$  with
$ (\eta^2)^\alpha \partial_\xi^\alpha$, for any multiindex $\alpha$
with $|\alpha|=L$.
As above we continue with H\"older's inequality, and this time
Lemma~\ref{LpL2}  and the differentiability assumptions on $a_{2}$
yield for $|\alpha|=L$
\begin{multline*}
\int_{0}^1\Big\| \int_{-1/2}^{1/2}
a_1(D,s)[\chi_{Q^*}\Mod_{-\eta^1}f_1]
\overline{\partial_\xi^\alpha a_2(D,s)\Mod_{-\sigma\eta^2}[\chi_{Q^*}f_2]}\,ds
\Big\|_{q'}d\sigma
 \\ \lc\, \delta_\circ^{2 -L}
\prod_{i=1,2}\|f_i\chi_{Q^*}\|_2\,.
\end{multline*}
The  loss of $\delta_\circ^{-L}$  in the previous formula is
(more than)  mitigated by
$$
\iint |\widehat \Phi_{j_1}(\eta^1) (\eta^2)^{\alpha}\widehat {\Phi}_{j_2}(\eta^2)|
\,d\eta^1\,d\eta^2 \lc 2^{-j_2L} \delta_\circ^{L},\qquad |\alpha|=L.
$$
Thus  the above argument yields  \eqref{fixedjweightedbd} for also for $0<L\le d+2$.
\end{proof}

\section{Proof of the weighted inequality} \label{proofweighted}
In  this section we prove  inequality  \eqref{weightboundSdelta}
of Theorem~\ref{weightL2thm}.
We mainly focus on a local inequality (with $t$-interval $[1,2]$) which for later application  we formulate for slightly more general multipliers.
Instead of $S^\delta_t$ we consider  operators $\cS^\delta_t$ defined by
$$
\widehat {\cS^{\delta}_t f}(\xi)=
\phi\big(\delta^{-1}(1-\tfrac{|\xi|^2}{t^2}) \beta(\xi,t)\big)\widehat f(\xi)
$$
where   $\phi$ is as in \eqref{schwartzestimates}, $\beta$ is a nonvanishing $C^\infty$ function on the set of  $(\xi,t)$ with $1/2< t< 5/2$ and
 $1/2\le |\xi|\le 4$.
Of course $\beta(\xi,t)=1$ in Theorem~\ref{weightL2thm}.


\begin{theorem}\label{Tdeltat}
Let $d\ge2$ and $q\in[1,\frac{d+2}{2})$.
Then, for $0<\delta<1/2$,
\Be\label{unitscale}\begin{aligned}
&\int_{\R^d}\int_{1}^2|\cS^\delta_tf(x)|^2\frac{dt}{t}\, w(x)\,dx
\lc \delta^{2-d/q}\int_{\bbR^d} |f(x)|^2\,Ww(x) \,dx
\end{aligned}
\Ee
with
$$
Ww(x)=
\sum_{1\le 2^{2j}<
\delta^{-1}} 2^{-2j(\frac dq -1)} M\!\circ\!\cW^j_{q,\delta,0} w (x) +\delta^{\frac dq -1}  M\! \circ \fM_{\sqrt\delta}w(x).
$$
\end{theorem}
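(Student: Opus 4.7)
The plan is to reduce the weighted inequality to Proposition \ref{bilweight} via an angular Whitney decomposition of the multiplier combined with a parabolic rescaling. We first expand
\[
\int_1^2 \int |\cS^\delta_t f(x)|^2 w(x)\, \frac{dx\,dt}{t} = \int_1^2 \int \cS^\delta_t f(x)\,\overline{\cS^\delta_t f(x)}\,w(x)\, \frac{dx\,dt}{t}
\]
and, for each dyadic scale $j$ with $\sqrt\delta \lesssim 2^{-j} \lesssim 1$, choose a smooth partition of unity $\{\chi_\nu^j\}$ on $S^{d-1}$ adapted to caps $\tau_\nu^j$ of radius $\sim 2^{-j}$. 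Writing $\cS^\delta_t = \sum_\nu \cS^{\delta,j,\nu}_t$ where $\cS^{\delta,j,\nu}_t$ carries the angular cutoff $\chi_\nu^j(\xi/|\xi|)$, we organize the pairs $(\nu,\nu')$ by angular separation via a Whitney-type decomposition, so that for each $j$ only $O(1)$ families of well-separated pairs (with $|\tau_\nu^j-\tau_{\nu'}^j|\sim 2^{-j}$) need be considered at that scale, with the diagonal pairs (at the finest scale $2^{-j}\sim\sqrt\delta$) remaining.

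For each fixed separated pair at scale $2^{-j}$, apply an affine change of variables that stretches the two caps to size $1$ separated by $\sim 1$: we set $\xi = \tau + 2^{-j}A_\nu\eta$ where $\tau$ lies between the two caps and $A_\nu$ is a rotation aligning $\tau$ with the last coordinate axis. Under this change, the sphere $|\xi|=t$ becomes the graph of $\eta_d = |\eta'|^2/2 - s + h(\eta',s)$ with $s \sim 2^{2j}(t-1)$ and $h$ of size $O(2^{-2j})$, so the symbols produced lie in the class $\Ellip(b,\eps,N_\circ)$ once $2^{-j}$ is small enough. The annular width $\delta$ rescales to $\delta_\circ := 2^{2j}\delta$, which is $<1$ by our range of $j$. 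Proposition \ref{bilweight} then yields a bilinear estimate whose right-hand side is
\[
C\,\delta_\circ^2 \prod_{i=1,2}\Bigl(\int |\tilde f_i(y)|^2 \Bigl(\int \frac{|\tilde w(y-z)|^q}{(1+\delta_\circ|z|)^{(d+1)q}}\,dz\Bigr)^{1/q}\,dy\Bigr)^{1/2},
\]
where tildes denote the rescaled functions. Unwinding the rescaling, the kernel $(1+\delta_\circ|\cdot|)^{-(d+1)}$ becomes (up to dimensional factors) the tube kernel $\cK^{\theta,j}_{2^j\delta}$ appearing in the definition of $\cW^j_{q,\delta,0}$; the Hardy--Littlewood maximal function $M$ arises from Littlewood--Paley comparison of $\tilde f_i$ with convolution-type maximal operators $\cM^{\theta,j}_{-j}$ acting on $P_{-j}$-localized pieces. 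Thus each scale-$j$ contribution is controlled by $C\,2^{4j}\delta^2\int|f(x)|^2\,M\!\circ\!\cW^j_{q,\delta,0}w(x)\,dx$.

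Summing in $j$ over $\sqrt\delta\lesssim 2^{-j}\lesssim 1$, using almost-orthogonality of the angular pieces in $f$, and factoring out $\delta^{2-d/q}$, the contributions reorganize as $\sum_{2^{2j}<1/\delta} 2^{-2j(d/q-1)}$ times the claimed term. The complementary piece, where $2^{-j}\sim\sqrt\delta$, cannot be handled by the bilinear argument (since $\delta_\circ\sim 1$); here $\cS^\delta_t$ acts essentially as convolution with an $L^1$-normalized bump on a tube of eccentricity $\sqrt\delta$, so after Cauchy--Schwarz in $t$ the contribution is bounded by $C\,\delta^{d/q-1}\int|f|^2\,M\!\circ\!\fM_{\sqrt\delta}w$, producing the second term.

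The main obstacle will be the last paragraph's bookkeeping: verifying that the pointwise comparison
\[
\Bigl(\int \frac{|\tilde w(y-z)|^q}{(1+\delta_\circ|z|)^{(d+1)q}}\,dz\Bigr)^{1/q} \lesssim M\!\circ\!\cW^j_{q,\delta,0}w(x)
\]
holds after un-rescaling, uniformly in the pair $(\nu,\nu')$, and that the Littlewood--Paley projections $\widetilde P_{-j}$ hidden in $\tilde f_i$ can be inserted without losing orthogonality across the angular decomposition. The identification of the $t$-integral with the $s$-variable of $\Ellip$ and the fact that $\partial_s \gamma = 1 + O(\eps)$ is what allows the $ds$-integration on the right side of Proposition \ref{bilweight} to come out naturally, but verifying this across all $j$ simultaneously and matching all Jacobians to recover exactly the weight $Ww$ is the delicate part.
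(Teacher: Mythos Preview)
Your overall architecture---Whitney decomposition in angle, parabolic rescaling of each separated pair to apply Proposition~\ref{bilweight}, and a separate treatment of the diagonal at scale $2^{-j}\sim\sqrt\delta$---is exactly the paper's strategy, and the rescaling computation you sketch is correct (it is carried out as Lemma~\ref{scaledbil}). But there is one genuine gap, and it is the crux of the whole argument.

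Unwinding the rescaling in Proposition~\ref{bilweight} only produces the weight $(\cK^{\theta,j}_{2^j\delta}*|w|^q)^{1/q}$; it does \emph{not} produce the frequency projection $P_{-j}$ or the grand-maximal piece $\cM^{\theta,j}_{-j}$ that appear in the definition of $\cW^j_{q,\delta,0}$. These act on the \emph{weight} $w$, not on $f$, and they do not come ``for free'' from any Littlewood--Paley decomposition of $f$. The mechanism is a separate observation: when $\theta_1,\theta_2$ are $\sim 2^{-j}$-separated, the Fourier transform of the product $T^\delta_t Q^{\theta_1,j}g_1\,\overline{T^\delta_t Q^{\theta_2,j}g_2}$ is supported in a thin plate $\{\xi: |\xi|\sim 2^{-j},\ |\langle\xi,\vartheta\rangle|\lesssim 2^{-2j}\}$. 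This allows one to insert, by duality, a convolution $\Psi^{j,\theta_1,\theta_2}*P_{\nu-j}$ acting on $w$, with $\Psi\in\cS^{\theta,j}_{-j}$. Without this step you are left with a bare Nikodym-type weight, which after taking $\sup_\theta$ and summing in $j$ does not give the bound claimed---indeed the entire point of $\cW^j_{q,\delta,0}$ is that the cancellation in $P_{-j}w$ makes the maximal operator bounded (Proposition~\ref{maximaltheorem}). Your sentence about ``$\cM^{\theta,j}_{-j}$ acting on $P_{-j}$-localized pieces'' and ``$\widetilde P_{-j}$ hidden in $\tilde f_i$'' suggests you have not located this mechanism.

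A secondary but related point: ``almost-orthogonality of the angular pieces in $f$'' is not enough in a weighted inequality. The paper uses the pointwise Carleson-type square-function bound of Lemma~\ref{carl}, $\sum_{\theta,n}|Q^{\theta,j}_n f|^2\lesssim H_j*|f|^2$, to pass from the angular pieces back to $f$; this is what ultimately produces the outer Hardy--Littlewood $M$ on the weight side, after combining with the averaging identity \eqref{averaging} that removes the Whitney cutoff $\chi_1(2^j(\xi-\eta))$.
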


We now show that Theorem \ref{Tdeltat}   (with $\beta\equiv1$)
implies
assertion \eqref{weightboundSdelta} of Theorem~\ref{weightL2thm}.
Let $W_kw(x)= W[w(2^{-k}\,\cdot\,)](2^kx)$. By \eqref{unitscale} and rescaling we see that
\begin{align*}
&\int_{\R^d}\int_{0}^\infty|S^\delta_tf(x)|^2\frac{dt}{t}\, w(x)\,dx
\\
=\ & \sum_{k\in \bbZ}
\int_{\R^d}\int_{1}^2\big|S^\delta_{2^k s}\sum_{i=-2}^2 P_{k+i}f(x)\big|^2\frac{ds}{s}\, w(x)\,dx
\\
\lc\ &\delta^{2-d/q}
\sum_{k\in \bbZ}\sum_{i=-2}^2
\int_{\R^d}\int_{1}^2\big|
P_{k+i}f(x)\big|^2 \,W_k w(x)\,dx
\frac{ds}{s}
\\
\lc\ &\delta^{2-d/q}
\int_{\R^d}\int_{1}^2|f(x)|^2
M(\sup_k|W_k w|^s)^{1/s}(x)\,dx
\end{align*}
and the last inequality is a consequence of Coifman's improvement
of the C\'ordoba--Fefferman
weighted norm inequality for singular integrals (see for example \cite[p. 417]{gcrdf}).
Now $M(\sup_k|W_k w|^s)^{1/s}(x)\lc \fW_{q,\delta}w$, by Minkowski's inequality
(\cf. \eqref{fWqdel}).

Theorem~\ref{Tdeltat}  implies the following sharp $L^p$ results,
by  a duality argument, the boundedness results of Proposition
\ref{maximaltheorem} and the Marcinkiewicz interpolation theorem.
 
\begin{corollary} \label{Tdeltatcor}
Let $d\ge 2$ and $p\in(\frac{2(d+2)}{d},\infty)$. Then, for $0<\delta<1/2$,
$$
\Big\|\Big(\int_{1}^2|\cS^\delta_tf(x)|^2\frac{dt}{t}\Big)^{1/2}\Big\|_p \lc
\delta^{1-d(\frac 12-\frac 1p)} \|f\|_p.
$$
Moreover if $\beta\equiv 1$ and $S^\delta_t$ is as in
\eqref{Sdeltat}
then
$$
\Big\| \Big(\int_{0}^\infty|S^\delta_tf(x)|^2\frac{dt}{t}\Big)^{1/2}\Big\|_p\lc \,
\delta^{1-d(\frac 12-\frac 1p)} \|f\|_p.
$$
\end{corollary}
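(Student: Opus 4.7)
The strategy is to deduce both inequalities from the weighted $L^2$ estimates of Theorem~\ref{Tdeltat} and Theorem~\ref{weightL2thm} by dualizing in $L^{p/2}$, controlling the weight operators via Proposition~\ref{maximaltheorem} and Theorem~\ref{weightL2thm}(i)--(ii), and using Marcinkiewicz interpolation to absorb a logarithmic loss in the range $p>4$.

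For the first estimate, fix $p>\tfrac{2(d+2)}{d}$ and set $q=(p/2)'\in(1,\tfrac{d+2}{2})$. By duality on $L^{p/2}$,
\begin{equation*}
\Big\|\Big(\int_{1}^{2}|\cS^\delta_t f|^2\tfrac{dt}{t}\Big)^{1/2}\Big\|_p^2
=\sup_{\|w\|_q\le 1}\int_{\R^d}\int_1^2|\cS^\delta_t f(x)|^2\tfrac{dt}{t}\,w(x)\,dx.
\end{equation*}
Theorem~\ref{Tdeltat} together with H\"older's inequality bounds this by $\delta^{2-d/q}\|f\|_p^2\|Ww\|_q$, and it suffices to verify $\|Ww\|_q\lc\|w\|_q$ uniformly in $\delta$. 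Since $q>1$, $M$ is bounded on $L^q$; Proposition~\ref{maximaltheorem}(i) gives $\|\cW^j_{q,\delta,0}\|_{L^q\to L^q}\lc 2^{j(d-2)/q}$ for $q\ge 2$ and $\lc 2^{j(d/q-1)}$ for $q<2$. Multiplied by the damping $2^{-2j(d/q-1)}$ in the definition of $W$, both produce negative exponents in $j$ (the first since $q<\tfrac{d+2}{2}$, the second since $d/q>1$), so the geometric series converges. The Nikodym term is controlled by standard non-endpoint $L^q$ bounds for $\fM_{\sqrt\delta}$, which the damping $\delta^{d/q-1}$ more than compensates. Since $2-d/q=2(1-d(\tfrac12-\tfrac1p))$, taking square roots concludes the first assertion.

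For the global estimate, repeat the same duality argument with Theorem~\ref{weightL2thm} and $\fW_{q,\delta}$ in place of Theorem~\ref{Tdeltat} and $W$. When $q\in[2,\tfrac{d+2}{2})$, i.e.\ $p\in(\tfrac{2(d+2)}{d},4]$, Theorem~\ref{weightL2thm}(ii) gives uniform $L^q$-bounds on $\fW_{q,\delta}$ and the proof closes. The obstacle is $p>4$ (so $q<2$), where the $L^q$-norm of $\fW_{q,\delta}$ carries a spurious factor $[\log(1/\delta)]^{1/q-1/2}$. To dodge this loss I would use the pointwise bound $\fW_{q,\delta}\le\fW_q$ and the weak type $(q,q)$ of $\fW_q$ from Theorem~\ref{weightL2thm}(i). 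Testing the weighted inequality on $f=\chi_F$ and $w=\chi_E$ and applying Kolmogorov's inequality $\int_F\fW_q\chi_E\,dx\lc|F|^{1-1/q}|E|^{1/q}$ yields
\begin{equation*}
\int_E\int_0^\infty|S^\delta_t\chi_F(x)|^2\tfrac{dt}{t}\,dx\,\lc\,\delta^{2-d/q}\,|F|^{1-1/q}|E|^{1/q}.
\end{equation*}
Taking $E=\{g(\chi_F)>\lambda\}$, where $g$ denotes the global square function, and using $1-\tfrac1q=\tfrac2p$ gives the restricted weak type $(p,p)$ inequality
\begin{equation*}
|\{g(\chi_F)>\lambda\}|^{1/p}\,\lc\,\delta^{1-d(\frac12-\frac1p)}|F|^{1/p}\lambda^{-1}
\end{equation*}
at every $p\in(\tfrac{2(d+2)}{d},\infty)$, with the sharp $\delta$-rate.

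Finally, since restricted weak type $(p,p)$ holds throughout this open interval, Marcinkiewicz interpolation between any pair $p_0<p<p_1$ in it upgrades these to strong type $(p,p)$, with the correct $\delta$-dependence preserved since the scaling exponent $1-d(\tfrac12-\tfrac1p)$ is linear in $1/p$. The main obstacle is precisely the logarithmic defect in the $L^q$-bounds of $\fW_{q,\delta}$ when $q<2$; passing through the weak type $(q,q)$ of $\fW_q$ to restricted weak type, and then Marcinkiewicz interpolating, is what resolves it.
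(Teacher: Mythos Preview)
Your proof is correct and follows essentially the same route the paper indicates: duality on $L^{p/2}$, the $L^q$ bounds for the weight operators from Proposition~\ref{maximaltheorem} (and Theorem~\ref{weightL2thm}), and Marcinkiewicz interpolation. Your explicit Kolmogorov step---passing from the weak type $(q,q)$ of $\fW_q$ to restricted weak type $(p,p)$ for the square function---spells out precisely what the paper's terse reference to ``the Marcinkiewicz interpolation theorem'' is meant to encode in the range $p>4$ where the strong $L^q$ bound on $\fW_{q,\delta}$ carries the unwanted $[\log(1/\delta)]^{1/q-1/2}$; your observation that the local estimate closes directly (no interpolation needed) because $\cW^j_{q,\delta,0}$ has uniform $L^q$ bounds is a small but correct sharpening of the paper's one-line remark.
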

The remainder of this section is devoted to the proof of
Theorem~\ref{Tdeltat}.

\subsection*{Preliminary considerations.}
We  begin with a rescaled variant of Proposition~\ref{bilweight}.
Such rescaling arguments
have been used in \cite{tavave}, \cite{ta}, \cite{le} and elsewhere.
In what follows fix a function
$\zeta\in C^\infty(\bbR^{d})$ supported in $\{y: |y|\le 1/8\}$ and define
a convolution operator with homogeneous multiplier by
\Be\label{Qjth}\widehat {Q^{\theta,j} f} =\zeta\big(2^j\big(\tfrac {\xi}{|\xi|}-\theta\big)
\Ee

In order to reduce estimates  to
Proposition \ref{bilweight} by rescaling
 we will need  to localize all multipliers
to a narrow sector $\{\xi:
|\tfrac{\xi}{|\xi|}-u|\le \eps_1 \}$
where $u$ is a unit vector
and $\eps_1$ is a small constant.
\begin{lemma} \label{scaledbil}
Given $C>1$
 there are small $\eps_1,\eps_2\in (0,1/8)$  depending on $q$, $d$ and $C$ and the function $\beta$
so that the following statement holds for $C2^{-j}<\eps_1$ and
$2^{2j}\delta<\eps_2$.

Let   $\psi$  be
supported in a ball of radius $\eps_1$ contained in $\{\xi:1/4\le|\xi|\le 4\}$
such that
 $\|\partial^\alpha \psi\|_\infty\le 1$
for $|\alpha|\le d+2$, let $\theta_1, \theta_2 \in  S^{d-1}$ be
 such that $2^{-j-1}\le |\theta_1-\theta_2|\le
C 2^{-j}$, let
$t_0\in[1,2]$ and $J$ be an interval of length $2^{-2j-2}$ containing $t_0$.
 Then
\begin{multline*}
\Big|\int\int_{J}
\cS_t^\delta  Q^{\theta_1,j} \psi (D)f_1(x)
\overline
{\cS_t^\delta  Q^{\theta_2,j} \psi (D)f_2(x)}\, \frac{dt}{t}\, w(x)\,dx \Big|
\\
\lc \delta (2^{2j}\delta)^{1- d/q}
\prod_{i=1,2} \Big( \int |f_i(x)|^2 \big(\cK^{\theta_i,j}_{2^j\delta} *|w|^q(x)\big)^{1/q} dx\Big)^{1/2}\,.
\end{multline*}
\end{lemma}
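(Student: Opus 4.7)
The plan is to reduce the estimate to Proposition~\ref{bilweight} by an anisotropic rescaling adapted to the cap of the sphere $|\xi|=t$ of angular radius $\approx 2^{-j}$ around $\theta_0:=(\theta_1+\theta_2)/|\theta_1+\theta_2|$. After a rotation we may assume $\theta_0=e_d$. Fix a constant $R\ge 4$ large enough that the separation hypothesis of Proposition~\ref{bilweight} will be met, put $T=\diag(R2^j,\dots,R2^j,R^2 2^{2j})$ in the rotated coordinates, and make the change of variables $\xi=T^{-1}\eta+t_0 e_d$ in frequency, $t=t_0+R^{-2}2^{-2j}s$ in time, $y=T^{-1}x$, $\tilde f_i(y)=|\det T|\,e^{-it_0(Ty)_d}f_i(Ty)$ and $\tilde w(y)=w(Ty)$. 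The modulation factors cancel in the conjugate product, so up to a universal constant this transforms the left side of the lemma into $R^{-2}2^{-2j}|\det T|$ times a bilinear integral of the form treated by Proposition~\ref{bilweight}, with rescaled multipliers $\tilde a_i(\eta,s)$ and rescaled thickness $\delta_\circ:=R^2 2^{2j}\delta$.

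A Taylor expansion of $1-|\xi|^2/t^2$ about $\xi=t_0 e_d$, $t=t_0$ shows that after rescaling the thin support of $\cS_t^\delta$ becomes the $\delta_\circ$-neighborhood of a hypersurface $\eta_d=\gamma(\eta',s)$ with $\gamma(\eta',s)=|\eta'|^2/(2t_0)-s+h(\eta',s)$ and $\|h\|_{C^{N_\circ}}=O(2^{-2j}+2^{2j}\delta)$; after a further constant linear adjustment of $(\eta_d,s)$ that normalizes the quadratic term to $|\eta'|^2/2$, one has $\gamma\in\Ellip(b,\eps,N_\circ)$ for the $\eps$ of Proposition~\ref{bilweight}, provided $\eps_1,\eps_2$ are small enough. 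The nonvanishing smooth factor $\beta(\xi,t)$ becomes a smooth, uniformly bounded multiplier with uniformly bounded derivatives which can be absorbed into the $\phi$-profile at the cost of a harmless constant. The angular cutoffs $\zeta(2^j(\xi/|\xi|-\theta_i))$ rescale to functions of $\eta'$ supported in balls of radius $\le R/8$ centered at points $v_i\in\R^{d-1}$ with $|v_1-v_2|\approx R\cdot 2^j|\theta_1-\theta_2|\ge R/2\ge 2$ and all lying inside $[-b,b]^{d-1}$ for $b$ sufficiently large; the derivative bound $|\partial^\alpha_\eta\tilde a_i(\eta,s)|\lc \delta_\circ^{-|\alpha|}$ holds because only the $\phi$-factor carries the singular $\delta_\circ^{-1}$, while differentiation of $\zeta$ and $\psi$ contributes only the subdominant $2^{-j}$, $2^{-2j}$. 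Thus all the hypotheses of Proposition~\ref{bilweight} are satisfied with $\delta_\circ=R^2 2^{2j}\delta$.

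Applying Proposition~\ref{bilweight} and unrolling the rescaling via $z=T^{-1}u$, one compares the resulting weight kernel with $\cK^{\theta_0,j}_{2^j\delta}(u)$: since $T^{-1}$ shrinks by $(R2^j)^{-1}$ perpendicular to $\theta_0$ and by $(R^2 2^{2j})^{-1}$ along $\theta_0$, matching exactly the anisotropy of $\ell_{\theta_0,j}^{-1}$ at scale $2^j\delta$, we have $|\ell_{\theta_0,j}^{-1}(2^j\delta u)|\approx \delta_\circ|T^{-1}u|$; together with $(1+\cdot)^{-(d+1)(q-1)}\le 1$ for $q\ge 1$ this gives the pointwise bound
\[
|\det T|^{-1}\bigl(1+\delta_\circ|T^{-1}u|\bigr)^{-(d+1)q}\lc \delta_\circ^{-d}\cK^{\theta_0,j}_{2^j\delta}(u),
\]
and \eqref{Kthetajtptwise} then yields $\cK^{\theta_0,j}_{2^j\delta}\lc \cK^{\theta_i,j}_{2^j\delta}$ since $|\theta_0-\theta_i|\lc 2^{-j}$. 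Collecting prefactors — $R^{-2}2^{-2j}|\det T|$ from the measure change in $x,t$, a factor $|\det T|^{-1}$ inside the $L^2$ norms when $\tilde f_i$ is converted back to $f_i$, the $\delta_\circ^2$ from Proposition~\ref{bilweight}, and the $\delta_\circ^{-d/q}$ from the kernel comparison — the two $|\det T|$ factors cancel and one is left with a constant multiple of $\delta(2^{2j}\delta)^{1-d/q}$, which is the claimed factor. The main technical obstacle is the uniform-in-$(j,\delta)$ verification that the rescaled phase $\gamma$ lies in the admissible class of Proposition~\ref{bilweight}; this is precisely what forces the two smallness conditions $C2^{-j}<\eps_1$ and $2^{2j}\delta<\eps_2$, while the subsequent bookkeeping for the kernel comparison, though routine, depends essentially on the fact that $\cK_\tau^{\theta,j}$ was defined with the anisotropy dual to caps of the sphere.
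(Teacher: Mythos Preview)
Your argument follows the same route as the paper's proof: rotate so that $(\theta_1+\theta_2)/|\theta_1+\theta_2|$ becomes $e_d$, apply an anisotropic parabolic rescaling to reduce to Proposition~\ref{bilweight}, verify via Taylor expansion that the rescaled phase lies in the class $\Ellip$, and then unwind the change of variables to identify the resulting weight kernel with $\cK^{\theta_i,j}_{2^j\delta}$. One small point to watch: with your time change $t=t_0+R^{-2}2^{-2j}s$ and $R\ge 4$, the $s$-interval corresponding to $J$ has length $R^2/4>1$ and so does not sit inside $(-1/2,1/2)$ as Proposition~\ref{bilweight} requires; this is harmless (split into $O(R^2)$ subintervals and shift $(s,\eta_d)$ to recenter, which also fixes the sign of the quadratic term), but the paper sidesteps it by using the scaling $A_j y=(2^j y',-2^{2j}y_d)$ and $t=t_0(1+2^{-2j}s)$ without an auxiliary dilation, absorbing the residual separation constant into the implicit bound of Proposition~\ref{bilweight}.
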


\begin{proof}
Set $\vth=\frac{\theta_1+\theta_2}{|\theta_1+\theta_2|}$ and
$e_d=(0,\dots,0,1)$. Take $R_\vth$ to be a rotation satisfying
$R_\vth e_d=\vth$ and let it act on functions by $\cR_\vth
f(y)=f(R_\vth y)$. Let $ \cA_j f(y)= f(A_jy)$, where
$A_jy=(2^{j}y_1,\dots, 2^j y_{d-1},- 2^{2j}y_d)$. Finally we set
$\cD_{t_0} f(y)= f(t_0y)$ and, as before, the modulation $\Mod_{a}$
is defined by $\Mod_{a} g(x) = g(x)\,e^{\im \inn{x}{a}}.$

For fixed $\delta, j$ and $i=1,2,$
the multiplier for
$S_t^\delta  Q^{\theta_i,j}\psi (D)$ is given by
$$m_{\theta_i}(\xi,t) =\psi (\xi) \phi \Big(\delta^{-1}(1-\tfrac {|\xi|^2}{t^2})
\beta(\xi,t)\Big) \zeta\Big(2^j (\tfrac{\xi}{|\xi|}-\theta_i)\Big).$$
Let $\Xi_\vth(\eta)=t_0R_\vth(e_d+A_{-j}\eta)$ and $t(s)= t_0(1+2^{-2j}s)$.
A rescaled multiplier depending on the parameter
$s= 2^{2j}(t_0^{-1}t-1)\in [-1/2,1/2]$ is defined by
%
%
\begin{align*}
M_{\theta_i}(\eta, s)&:=
m_{\theta_i}(\Xi_\vth(\eta),  t(s))
\\&= \phi \Big(\delta^{-1}\big(1- \tfrac{|e_d+A_{-j}\eta|^2}{(1+2^{-2j}s)^2}\big)
 \beta\big(\Xi_\vth(\eta), t(s)\big)\Big)\,
\zeta\big( 2^jR_\vth\big[
\tfrac{e_d+A_{-j}\eta}{|e_d+A_{-j}\eta|}
- R^{-1}_\vth\theta_i\big]\big)\psi \big(\Xi_\vth(\eta)\big).
\end{align*}
Now compute
\begin{align}
1- \frac{|e_d+A_{-j}\eta|^2}{(1+2^{-2j}s)^2}
&=2^{1-2j} \Big( \frac{s(1+2^{-2j-1}s)+\eta_d-|\eta'|^2/2-\eta_d^2 2^{-2j-1}}{(1+2^{-2j}s)^2}\Big)
\notag
\\&= 2^{1-2j}
\Big(s+\eta_d-\frac{|\eta'|^2}{2}\Big)+ 2^{-4j}
r_j(\eta,s),
\label{paraboloid}
\end{align}
where  $r_{j}$ is a quadratic polynomial in $\eta$ with coefficients uniformly
bounded in $s$, $j$.
  Moreover the supports of the functions
$\zeta\big( 2^jR_\vth\big[
\tfrac{e_d+A_{-j}\eta}{|e_d+A_{-j}\eta|}- R^{-1}_\vth\theta_i\big]\big)$,
$i=1,2$, are uniformly separated  and these functions have derivatives
with bounds uniform in $j$.

Now let $b=10d C$, and set
$M= 1+\sum_{|\alpha|\le 2}\max_{|\xi|\le b}
\max_{|s\le 1}|\partial_\eta^\alpha r_j(\eta,s)|$,
for the error terms $r_j$ in
\eqref{paraboloid}. Let
$\eps$ be as   in Proposition \ref{bilweight},
 and choose $\eps_1$ small compared to $(2d)^{-1} (\eps/M)^{1/2}$.
By the assumed separation property we have
$$\cS_t^\delta  Q^{\theta_1,j} \psi (D)f_1\,
\overline{\cS_t^\delta  Q^{\theta_2,j} \psi(D)f_2}\equiv 0,\quad 2^{-j}\ge d\eps_1\,.$$
For the relevant complementary range we have $2^{-2j}M <\eps$ so that
the functions
$(\eta',s)\mapsto |\eta'|^2/2 -s +2^{-2j-1} r_j(\eta',s)$ belong to
$\Ellip(b,\eps, N_\circ)$.
Since $\beta$ is smooth and satisfies an inequality $2\eps_2\le |\beta(\xi,t)|\le (2\eps_2)^{-1}$ for $1/4\le |\xi|\le 4$,
the formula \eqref{paraboloid} allows us
to apply Proposition~\ref{bilweight} with $a_i(D,s)= M_{\theta_i}(D,s)
(1+2^{-2j} s)^{-1/2}$ and
 $\delta_\circ= (2\eps_2)^{-1} \delta 2^{2j}<1/2$.

We obtain
\begin{multline*}
\Big|\int_{\bbR^d} \int_{-1/2}^{1/2}
\prodstar[M_{\theta_i}(D, s) f_i(x)]
 w(x)\, dx\,\frac{ds}{1+2^{-2j}s} \Big|
\\ \lc (2^{2j}\delta)^{2}
\prod_{i=1,2} \Big(\int|f_i(x)|^2 (2^{2j}\delta)^{-d/q}\cV_{2^{2j}\delta} w(x)\,dx\Big)^{1/2}\,,
\end{multline*}
where $$\cV_{\delta_\circ} w(x)=\Big(\int \frac{\delta_\circ^d|w(x-y)|^q}{(1+\delta_\circ|y|)^{(d+1)q}}dy
\Big)^{1/q} .$$
Now, with $t=t_0(1+2^{-2j }s)$, we have
$m_{\theta_i}(\xi, t)= M_{\theta_i}(A_j(R_\vartheta^{-1}t_0^{-1}\xi-e_d), s),$
so that
$$\cS_t^\delta  Q^{\theta_i,j}\psi(D) f_i(x)
=
\cD_{t_0} \cR_\vth^{-1} \Mod_{e_d}\cA_{-j} M_{\theta_i}(D,s)g_i(x)\ \text{ with }\
g_i=\cA_{j}\Mod_{-e_d} \cR_\vth \cD_{t_0}^{-1}f_i.
$$
This leads to
\begin{align*}
&\Big|\int\int_{I_{j,T} }
\prodstar[\cS_t^\delta  Q^{\theta_i,j}\psi(D) f_i(x)]
\frac{dt}{t}\, w(x)\,dx \Big|
\\=\ & 2^{-2j}
\Big|\int\int_{-1/2}^{1/2}\prodstar[
\cD_{t_0} \cR_\vth^{-1} \Mod_{e_d}\cA_{-j} M_{\theta_i}(D,s)g_i(x)]
\frac{ds}{1+2^{-2j} s}\, w(x)\,dx \Big|
\\
\lc\ & 2^{-2j} (2^{2j} \delta)^2
\prod_{i=1,2} \Big(\int |f_i(x)|^2 (2^{2j}\delta)^{-d/q}
\cD_{t_0}\cR_\vth^{-1} \cA_{j}
[ \cV_{2^{2j}\delta} (\cA_{j}\cR_\vth \cD_{t_0}^{-1} w)](x)\,dx\Big)^{1/2}\\
\lc\ & 2^{2j(1-d/q)} \delta^{2-d/q}
\prod_{i=1,2} \Big(\int |f_i(x)|^2
 \big(\cK^{\vth, j}_{2^jt_0\delta} *|w|^q(x)\big)^{1/q} dx\Big)^{1/2}.
\end{align*}
The assertion now follows from
$\cK^{\vth, j}_{2^jt_0\delta} (x)\approx
\cK^{\theta_i,j}_{2^j\delta} (x)$ since $t_0\in [1,2]$ and  $|\vth-\theta_i|\lc 2^{-j}$ for $i=1,2$.
\end{proof}

A version of the following lemma  is originally due to Carleson (unpublished);
slightly different forms  can be found in ~\cite{co2},
~\cite{ru0} and~\cite{se1}.
For the sake of  completeness we include the proof.

\begin{lemma}\label{carl}
Let $A$ be an invertible linear transformation and $A^t$ its transpose.
 Suppose that $\{m_k\}_{k\in\bbN}$ have disjoint supports.
Then for $s\ge 0$ and almost every $x\in \bbR^d$
$$\Big(\sum_{k}\big|\cF^{-1}[m_k(A^{t}\,\cdot\,)
  \widehat f \,](x)
\big|^2\Big)^{1/2} \le C\sup_k
  \|m_k\|_{L^2_s}
\Big(\int
\frac{\det (A^{-1})}{(1+ |A^{-1}y|^2)^s } |f(x-y)|^2 dy \Big)^{1/2}.
$$
\end{lemma}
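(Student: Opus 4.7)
My plan is first to reduce the statement to the case $A = I$ by a linear change of variable, and then to recast the resulting inequality as a Bessel-sequence bound that follows from Fourier-side disjointness of the $\{m_k\}$.

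For the scaling reduction, a direct computation with the substitution rule for the Fourier transform under linear changes of variable yields the identity
\[
\cF^{-1}[m_k(A^t\cdot)\widehat f\,](x) \;=\; T_k(f\circ A)\bigl(A^{-1}x\bigr),\qquad T_kg:=\cF^{-1}[m_k\widehat g\,],
\]
and the substitution $z=Ay$ in the weighted right-hand side integral converts the lemma for arbitrary $A$ into the case $A=I$. With $A=I$ fixed, for each $x$ I would write $T_kf(x)=\int v_k(y)\,G(y)\,dy$ with $v_k(y):=(1+|y|^2)^{s/2}\cF^{-1}[m_k](y)$ and $G(y):=(1+|y|^2)^{-s/2}f(x-y)$. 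Then $\|G\|_2^2$ coincides with the weighted integral on the right-hand side of the lemma, while Plancherel together with $\cF[\cF^{-1}[m_k]]=m_k$ gives $\|v_k\|_2^2\approx\|m_k\|_{L^2_s}^2$. The lemma is therefore equivalent to the Bessel-type bound $\sum_k|\langle v_k,\overline G\rangle|^2\lesssim \sup_k\|v_k\|_2^2\,\|G\|_2^2$, which by $\ell^2$-duality is equivalent to
\[
\Big\|\sum_k c_k m_k\Big\|_{L^2_s}^2 \;\lesssim\; \sup_k\|m_k\|_{L^2_s}^2\,\|c\|_{\ell^2}^2 \qquad \text{for all } (c_k)\in\ell^2.
\]

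The last display is where the disjointness of the supports $E_k:=\operatorname{supp}(m_k)$ is used. For integer $s$ it is immediate from the equivalent characterization $\|m\|_{L^2_s}^2\approx\sum_{|\alpha|\le s}\|\partial^\alpha m\|_2^2$: substituting $m=\sum_k c_km_k$, each $\partial^\alpha m_k$ retains the disjoint support of $m_k$, so $\int|\partial^\alpha m|^2=\sum_k|c_k|^2\int|\partial^\alpha m_k|^2$, and summing in $\alpha$ telescopes to $\sum_k|c_k|^2\|m_k\|_{L^2_s}^2\le\sup_k\|m_k\|_{L^2_s}^2\|c\|_{\ell^2}^2$. For general $s=n+\sigma$ with $n$ a nonnegative integer and $\sigma\in[0,1)$, I would treat the fractional piece through the Gagliardo seminorm $[g]_\sigma^2=\iint|g(\xi)-g(\eta)|^2|\xi-\eta|^{-d-2\sigma}\,d\xi\,d\eta$, splitting the double integral by whether $(\xi,\eta)$ lies in a common support $E_k$ or in distinct ones: the off-diagonal contributions satisfy $|c_jm_j(\xi)-c_km_k(\eta)|^2\le 2(|c_j|^2|m_j(\xi)|^2+|c_k|^2|m_k(\eta)|^2)$ and are absorbed into the natural tail contribution to each individual $[m_k]_\sigma^2$. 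The main obstacle is precisely this fractional case: exact $L^2$-orthogonality of the $v_k$ (which follows for integer $s$ because $(1-\Delta)^{s/2}$ preserves supports) breaks down because the Bessel potential is nonlocal, and the Gagliardo splitting is the substitute for genuine orthogonality that still delivers $\sup_k$ rather than $\sum_k$ on the right-hand side.
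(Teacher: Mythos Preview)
Your proof is correct and follows essentially the same route as the paper's: reduce to $A=I$, rewrite each term via Cauchy--Schwarz with the weight $(1+|y|^2)^{s/2}$, dualize to the key inequality $\|\sum_k c_k m_k\|_{L^2_s}^2\lesssim\|c\|_{\ell^2}^2\sup_k\|m_k\|_{L^2_s}^2$, and then exploit disjoint supports. The only difference is that the paper disposes of non-integer $s$ by analytic interpolation at the outset, whereas you handle it directly via the Gagliardo seminorm; your off-diagonal splitting is valid (the cross terms are dominated by twice the tail contributions already present in each $[m_k]_\sigma^2$), so both routes work.
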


\begin{proof}
We can assume that $m_k=0$ for all but
finitely many $k$.
Since
$\cF^{-1}[m_k(A^{t}\,\cdot\,)
  \widehat f \,](x)=
\cF^{-1}[m_k
  \widehat {f(A\,\cdot\,)}](A^{-1}x)$
we can reduce to the case where 
$A$ is the identity transformation.
Also, using an  analytic interpolation argument, we may assume that $s\in \bbN\cup\{0\}$.

Let  $g_k = \cF^{-1}[m_k]$. Then
$\sum_{k}\big|\cF^{-1}[m_k  \widehat f \,](x)\big|^2
= \sup_{\substack {\|a\|_{\ell^2(\bbZ^d)}\le 1}}
|\sum_k a_k  \, g_k *f(x)|^2.$
Now, for each fixed $a\in \ell^2(\bbZ^d)$, we  apply  the Schwarz inequality in the convolution integral and then Plancherel's theorem to obtain
$$
\Big|\sum_k a_k  \,g_k*f (x)\big|
\lc\Big\|\sum_k a_k  \, m_k\Big\|_{L^2_s} \Big(\int
\frac{|f(x-y)|^2}{(1+|y|^{2})^{s}} dy \Big)^{1/2}.
$$
Thus, we are done as for $s\in \bbN\cup\{0\}$,
\begin{equation*}\label{sob}
\Big\|\sum_k a_k  \, m_k\Big\|^2_{L^2_s}\lc\sum_{|\alpha|\le
s}\Big\|\sum_k a_kD^\alpha\!m_k\Big\|^2_{L^2}\lc \|a\|^2_{\ell^2}\sup_k \|m_k\|^2_{L^2_s},
\end{equation*}
by the disjointness of the supports.
\end{proof}

\subsection*{Some reductions}
We remark that it suffices to prove   Theorem~\ref{Tdeltat}
only for very small values of $\delta$, as by straightforward estimation
$$
\int_{\bbR^d}\int_{1}^2|\cS^\delta_tf(x)|^2\frac{dt}{t}\, w(x)\,dx
\lc \delta^{-C} \int_{\bbR^d} |f(x)|^2\,Mw(x) \,dx
$$
for a suitable power $C>0$,  and clearly
$Mw\lc \fM_{\sqrt\delta}w$. In particular we may assume that $\delta$ is small compared to the constant $\eps_2$  in Lemma \ref{scaledbil}.

We may   replace $\cS^\delta_t$  by
\Be\label{Tdeltadef}
T^\delta_t = \psi(D)
\cS^\delta_t
\Ee
where $\psi$ is as in Lemma \ref{scaledbil} (smooth and supported in a ball of radius $\eps_1$).
In view of the  invariance  properties  of the weight operators one can
 use a partition of unity to deduce the weighted inequality
\eqref{unitscale} from the corresponding result for $T^\delta_t$.

We now prepare for an application of Lemma
\ref{scaledbil} and decompose on the frequency side
 the product $F_1\overline{F_2}$ for
suitable $F_i$ (initially  $F_i= T^\delta_t f_i$).
We let $\chi_\circ$ to be a radial $C^\infty_c$ function
$\chi_\circ(\om)=1$ for $|\om|\le 2^{5}$ and so that
 $\supp \chi_\circ$ is contained in $\{\om:|\om|\le 2^5+1\}$, moreover set,
$$\chi_1(\om) =\chi_\circ(\om)-\chi_\circ(2\om).$$
We also let $j_\circ=j_\circ(\delta)$ denote the integer with
$$\sqrt{\delta/\eps_2}\le 2^{-j_\circ}<2\sqrt{\delta/\eps_2}$$
where $\eps_2\in (0,1)$ is as in Lemma \ref{scaledbil}.

Define bilinear forms for pairs of Schwartz functions by
\Be\label{bildef}\begin{aligned} \fB_\circ [F_1,F_2](x)  &=
\frac{1}{(2\pi)^{2d}}\iint \chi_\circ (2^{j_\circ}(\xi-\eta))
\widehat F_1(\xi) \widehat F_2(-\eta)\,e^{\im \inn{\xi-\eta}{x}}
d\xi d\eta,
\\
\cB^j[F_1,F_2](x)  &=
\frac{1}{(2\pi)^{2d}}\iint \chi_1 (2^{j}(\xi-\eta))
\widehat F_1(\xi) \widehat F_2(-\eta)\,e^{\im \inn{\xi-\eta}{x}}
d\xi d\eta.
\end{aligned}\Ee
Then one easily verifies the decomposition
$$
F_1\overline{F_2}= \fB_\circ [F_1,\overline{F_2}] + \sum_{j<j_\circ}
\cB^j[F_1,\overline{F_2}].
$$
Later, in  cases where the supports of the Fourier  transforms of
$F_1$ and $F_2$ are separated we wish to dispense with the frequency
cutoff $\Phi_j(\xi-\eta)$ and replace $\fB_\circ$ or $\cB_j$ by
 a product.
This will be  accomplished by using the identities
\Be\label{averaging}
\begin{aligned}
(2\pi)^{d} \fB_\circ[F_1,F_2](x)&=
2^{-j_0d}  \widehat \chi_\circ (2^{-j_0}\,\cdot\,) *[F_1F_2] (x),
\\
(2\pi)^{d} \cB^j[F_1,F_2](x)&= 2^{-jd}\widehat \chi_1 (2^{-j}\,\cdot\,) *[F_1F_2] (x)
\end{aligned}
\Ee
which follow from
 the Fourier inversion formula (and the assumption that
$\chi_\circ$ is radial).

The desired weighted norm inequality \eqref{unitscale} follows from
the following two propositions, applied for $f_1=f_2=f$.
The proof of the first one is rather straightforward.
\begin{proposition}  \label{Bcirc}
\begin{equation*}
\Big|\int\! \int_{1}^{2}
\fB_\circ [T^\delta_t f_1,\overline{T^{\delta}_t f_2}](x) \frac{dt}{t}\,
w(x)\,dx\Big|
\lc
\delta
\prod_{i=1,2}\Big(\int |f_i(x)|^2 \int \frac{\delta^{d/2}
\fM_{\sqrt\delta}w(x-y)}{(1+\delta^{1/2} |y|)^{d+1}} dy\, dx\Big)^{1/2}.
\end{equation*}
\end{proposition}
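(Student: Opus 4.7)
The plan is to use the averaging identity \eqref{averaging} to push the frequency cutoff in $\fB_\circ$ onto the weight $w$, decouple the two factors by Cauchy--Schwarz, and then reduce to a one-sided weighted $L^2$ estimate for $T^\delta_t$.

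Applying \eqref{averaging} to $(F_1,F_2)=(T^\delta_t f_1,\overline{T^\delta_t f_2})$, then Fubini, and using that $\widehat{\chi_\circ}$ is real and even (as $\chi_\circ$ is radial), one rewrites
\begin{equation*}
\int \fB_\circ[T^\delta_t f_1,\overline{T^\delta_t f_2}](x)\,w(x)\,dx = (2\pi)^{-d}\int T^\delta_t f_1(y)\,\overline{T^\delta_t f_2(y)}\,(\tK * w)(y)\,dy,
\end{equation*}
where $\tK(y):=2^{-j_\circ d}\widehat{\chi_\circ}(2^{-j_\circ} y)$ is a Schwartz bump concentrated at spatial scale $2^{j_\circ}\sim\delta^{-1/2}$, so $|\tK(y)|\le C_N\delta^{d/2}(1+\delta^{1/2}|y|)^{-N}$ for every $N$. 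Integrating in $t\in[1,2]$ and applying Cauchy--Schwarz in $(t,y)$, Proposition~\ref{Bcirc} reduces to proving, for a single $f$, the one-sided weighted estimate
\begin{equation}\label{pBcirckey}
\int_1^2\!\int|T^\delta_t f(y)|^2\,|\tK*w|(y)\,dy\,\tfrac{dt}{t}\lc \delta\int|f(x)|^2\!\int\frac{\delta^{d/2}\,\fM_{\sqrt\delta}w(x-y)}{(1+\delta^{1/2}|y|)^{d+1}}\,dy\,dx.
\end{equation}

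To establish \eqref{pBcirckey} I would exploit the Kakeya structure of $T^\delta_t$. Decompose $\psi=\sum_\nu\psi_\nu$ into smooth pieces supported on $\sqrt\delta$-caps in directions $\theta_\nu$, so $T^\delta_t=\sum_\nu T^{\delta,\nu}_t$ with each summand Fourier-supported in a slab of dimensions $\sqrt\delta\times\cdots\times\sqrt\delta\times\delta$ along $\theta_\nu$. Since $\widehat{\tK*w}$ is supported in $\{|\zeta|\lesssim\sqrt\delta\}$, the cross terms between caps with $|\theta_\nu-\theta_{\nu'}|\gg\sqrt\delta$ contribute negligibly when paired against $\tK*w$, reducing the left-hand side of \eqref{pBcirckey} to the diagonal sum $\sum_\nu\int\!\int_1^2|T^{\delta,\nu}_t f|^2|\tK*w|\,dy\,dt/t$. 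Applying Lemma~\ref{carl} with the anisotropic dilation $A$ mapping such a slab to the unit cube yields the pointwise bound
\begin{equation*}
\sum_\nu|T^{\delta,\nu}_t f(x)|^2\lc \int\frac{\delta^{(d+1)/2}\,|f(x-y)|^2}{(1+\delta|y_\perp|^2+\delta^2|y_\parallel|^2)^N}\,dy,
\end{equation*}
whose right-hand side is an $L^1$-normalized average of $|f|^2$ over a tube of dimensions $\delta^{-1/2}\times\cdots\times\delta^{-1/2}\times\delta^{-1}$ (eccentricity $\sqrt\delta$). Pairing with $|\tK*w|$ produces a tube average of the weight, dominated by $\fM_{\sqrt\delta}w$ smoothed at scale $\delta^{-1/2}$, while the scalar factor $\delta$ arises from $\int_1^2|m_{t,\nu}(\xi)|^2\,dt/t\lc\delta$, encoding the radial $\delta$-thinness of the support of $m_t$ at each fixed $\xi$.

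The main obstacle is the weighted $L^2$ estimate \eqref{pBcirckey} itself. A naive Plancherel or operator-norm bound retaining $|\tK*w|$ yields only crude control of order $\|w\|_1$; recovering \emph{simultaneously} the eccentric Nikodym majorant $\fM_{\sqrt\delta}w$ and the correct scalar $\delta$ requires combining the $\sqrt\delta$-cap Kakeya decomposition of $T^\delta_t$ with the $\sqrt\delta$-Fourier localization of $\tK$, so as to convert isotropic smoothing of the weight into anisotropic tube averaging while preserving the radial-thinness gain from the $t$-integration.
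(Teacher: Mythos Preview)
Your reduction via \eqref{averaging} and Cauchy--Schwarz in $(t,y)$ to the one-sided inequality \eqref{pBcirckey} is fine, but the outlined proof of \eqref{pBcirckey} has a real gap.  Once you pass to $|\tK*w|$ you have destroyed the Fourier-support information: the function $|\tK*w|$ is no longer frequency-localized to $\{|\zeta|\lc\sqrt\delta\}$, so the claim that off-diagonal cap interactions $T^{\delta,\nu}_t f\,\overline{T^{\delta,\nu'}_t f}$ with $|\theta_\nu-\theta_{\nu'}|\gg\sqrt\delta$ are ``negligible when paired against $\tK*w$'' is not justified.  The diagonalization has to happen \emph{before} the modulus appears, i.e.\ while the bilinear product $T^\delta_t f_1\,\overline{T^\delta_t f_2}$ is still paired against $\tK*w$ (or, equivalently, while the operator $\fB_\circ$ is still in place, via \eqref{thetasep}).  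A second issue is your invocation of Lemma~\ref{carl} ``with the anisotropic dilation $A$'': that lemma uses a single fixed $A$, but your caps point in different directions $\theta_\nu$, so no single anisotropic $A$ covers the whole sum; as written, the pointwise bound with $y_\perp,y_\parallel$ has no well-defined direction once you sum over~$\nu$.

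The paper's proof resolves both problems by keeping the bilinear structure and diagonalizing first: from \eqref{thetasep} only pairs $\theta_1,\theta_2\in\Theta_{j_\circ}$ with $|\theta_1-\theta_2|\lc 2^{-j_\circ}$ contribute to $\fB_\circ[T^\delta_t f_1,\overline{T^\delta_t f_2}]$.  For each such nearby pair one uses the pointwise kernel bound $|\cF^{-1}[m_{\theta_i}]|\lc\cK^{\theta_i,j_\circ}$ and two Cauchy--Schwarz inequalities to obtain \eqref{straightf}; the gain of $\delta$ then comes from splitting $[1,2]$ into subintervals $I^{j_\circ}_m$ of length $2^{-2j_\circ}\approx\delta$ and using the coupling \eqref{tnmloc} of the indices $l,n$ to $m$.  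Lemma~\ref{carl} is applied twice, but each time with a \emph{fixed} dilation: once in the anisotropic frame of a single $\theta_i$ for the $l$-sum (\eqref{pjlest}), and once with the isotropic scale $2^{-j_\circ}$ for the $(\theta,n)$-sum.  The averaging identity \eqref{averaging} is invoked only at the very end, to pass from the near-diagonal bilinear sum back to $\fB_\circ$.
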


\noindent More substantial (and relying on \S\ref{biladjrestr}) is

\begin{proposition} \label{Bj} Let $q\in[1,\frac{d+2}{2})$.
Then, for $j<j_\circ$,
\begin{multline*}
\Big|\int \!\int_{1}^{2}
\cB^j [T^\delta_t f_1,\overline{T^{\delta}_t f_2}](x) \frac{dt}{t}\,w(x)\,dx\Big|
\\ \lc \delta^{2-\frac dq} 2^{-2j (\frac{d}{q}-1)}
\prod_{i=1,2}\Big(\max_{|\nu|\le 8}\int |f_i(x)|^2 H_j*\cW^{j-\nu}_{q,\delta,0}w(x)\,dx\Big)^{1/2},
\end{multline*}
where $H_j(x)=2^{-jd}(1+2^{-j}|x|)^{-d-1}$.
\end{proposition}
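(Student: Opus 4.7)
The plan is to reduce the weighted bilinear form to the rescaled bilinear estimate of Lemma~\ref{scaledbil} via an angular decomposition of $S^{d-1}$ and a temporal partition of $[1,2]$, and then to close with an orthogonality argument in the spirit of Lemma~\ref{carl}. By \eqref{averaging},
\begin{equation*}
\int\!\!\int_1^2 \cB^j[T^\delta_t f_1,\overline{T^\delta_t f_2}]\,w\,\frac{dt}{t}\,dx
\,=\, c_d\int\!\!\int_1^2 T^\delta_t f_1\,\overline{T^\delta_t f_2}\,(\tilde K^j * w)\,\frac{dt}{t}\,dx,
\end{equation*}
where $\tilde K^j(x) = (2\pi)^{-d} 2^{-jd}\widehat{\chi_1}(-2^{-j}x)$ satisfies $|\tilde K^j|\lc H_j$ and has Fourier transform supported in the annulus $\{|\xi|\approx 2^{-j}\}$. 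This annular support is crucial, as it will enforce the angular separation required by the bilinear estimate.

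Fix a large constant $K_0$, choose a $2^{-j-K_0}$-separated maximal set $\{\theta_\mu\}\subset S^{d-1}$ with subordinate smooth angular cutoffs $Q^{\theta_\mu,j+K_0}$, and partition $[1,2]$ into $\sim 2^{2j}$ subintervals $J_\ell$ of length $2^{-2j-2}$. Substituting $T^\delta_t f_i = \sum_\mu T^\delta_t Q^{\theta_\mu,j+K_0}f_i$, the Fourier support of $T^\delta_t Q^{\theta_{\mu_1},j+K_0}f_1 \cdot \overline{T^\delta_t Q^{\theta_{\mu_2},j+K_0}f_2}$ lies near $t(\theta_{\mu_1}-\theta_{\mu_2})$ with a perturbation of size $O(2^{-j-K_0}+\delta)$; convolution against $\tilde K^j$ then forces $|\theta_{\mu_1}-\theta_{\mu_2}|\approx 2^{-j}$. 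Only $O(1)$ pairs $\mu_2$ per $\mu_1$ contribute, and these satisfy the separation hypothesis of Lemma~\ref{scaledbil} at the coarse scale $2^{-j}$.

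For $t\in J_\ell$, the radial Fourier support of $T^\delta_t Q^{\theta_\mu,j+K_0}f_i$ lies in an interval of width $\sim |J_\ell|+\delta$ centered at the midpoint of $J_\ell$, and since $|J_\ell|\gg\delta$ for $j<j_\circ$, these Fourier supports are essentially disjoint across $\ell$. Applying Lemma~\ref{scaledbil} on each $J_\ell$ with $\tilde K^j * w$ in place of $w$ yields, for each contributing pair, a bound
\begin{equation*}
\delta(2^{2j}\delta)^{1-d/q}\prod_{i=1,2}\Big(\int |Q^{\theta_{\mu_i},j+K_0}f_i|^2\bigl(\cK^{\theta_{\mu_i},j}_{2^j\delta}*|\tilde K^j*w|^q\bigr)^{1/q}\,dx\Big)^{1/2}.
\end{equation*}
Summing over $(\ell,\mu_1,\mu_2)$ by Cauchy--Schwarz, and invoking $L^2$-type orthogonality in $\ell$ (from the essentially disjoint radial Fourier supports, which prevents the naive $2^{2j}$-loss coming from the $2^{2j}$ subintervals) together with Lemma~\ref{carl} applied to the disjointly-supported angular multipliers of $\{Q^{\theta_\mu,j+K_0}\}$, reduces matters to bounding, for each $i$, a single weighted $L^2$-norm of $f_i$ with weight $W^i$ obtained from $\sup_\theta(\cK^{\theta,j}_{2^j\delta}*|\tilde K^j*w|^q)^{1/q}$ by convolution with an $L^1$-normalized bump on a tube of length $2^{2j}$ and width $2^j$ --- precisely a kernel admissible in $\cM^{\theta,j}_{-j}$.

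The final pointwise identification $W^i(x)\lc H_j*\cW^{j-\nu}_{q,\delta,0}w(x)$ for some integer $|\nu|\le 8$ follows by combining $|\tilde K^j|\lc H_j$, the match between the annular Fourier support of $\tilde K^j$ at scale $2^{-j}$ and the Littlewood--Paley cutoff $P_{-j}$ (up to a bounded dyadic shift), the pointwise bound \eqref{cKconv}, and Lemma~\ref{prelptwise}. The integer shift $\nu$ reflects the $O(1)$ calibration constants relating the angular decomposition scale $2^{-j-K_0}$ to the scale $2^{-(j-\nu)}$ inherent in the definition of $\cM^{\theta,j-\nu}_{-(j-\nu)}P_{-(j-\nu)}$. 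The main obstacle is the simultaneous use of orthogonality in both the temporal ($\ell$) and angular ($\mu$) indices to avoid the extraneous factor $2^{2j}$ that the sharpness of the statement precludes; the subsequent identification with $\cW^{j-\nu}_{q,\delta,0}$ up to the bounded index shift is then a matter of careful scale-tracking.
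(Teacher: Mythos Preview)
Your overall strategy---angular decomposition, temporal partition into intervals of length $2^{-2j}$, reduction to Lemma~\ref{scaledbil}, then orthogonality via Lemma~\ref{carl}---mirrors the paper's. However, there is a genuine gap in the final weight identification.

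By applying the averaging identity \eqref{averaging} \emph{first}, you place the isotropic kernel $\tilde K^j$ inside the weight, arriving at $\bigl(\cK^{\theta,j}_{2^j\delta}*|\tilde K^j*w|^q\bigr)^{1/q}$. But the target $\cW^{j-\nu}_{q,\delta,0}w$ has the structure $\bigl(\sup_\theta \cK^{\theta,j-\nu}_{2^{j-\nu}\delta}*|\cM^{\theta,j-\nu}_{-(j-\nu)}P_{-(j-\nu)}w|^q\bigr)^{1/q}$, where $\cM^{\theta,j-\nu}_{-(j-\nu)}$ is a supremum over \emph{tube} kernels in $\cS^{\theta,j-\nu}_{-(j-\nu)}$ (length $\sim 2^{2j}$, width $\sim 2^j$). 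The isotropic $\tilde K^j$ is \emph{not} dominated by any such tube kernel: an $L^1$-normalized isotropic bump at scale $2^j$ has height $\sim 2^{-jd}$, while an $L^1$-normalized tube kernel in this class has height $\sim 2^{-j(d+1)}$; the pointwise comparison fails by a factor of $2^j$. The tube kernel you produce from the $\ell$-orthogonality sits \emph{outside} the $q$-th power and the $\sup_\theta$, so it cannot be moved inside to repair this.

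The paper proceeds differently. It first proves the bilinear estimate with the \emph{plain} weight $w$ (inequality \eqref{wopsi}), and only then observes that for fixed $\theta_1\sim\theta_2$ the Fourier transform of the product $T^\delta_t Q^{\theta_1,j}g_1\,\overline{T^\delta_t Q^{\theta_2,j}g_2}$ is supported in a thin \emph{plate} $\{2^{-j+2}\le|\xi|\le 2^{-j+6},\ |\inn{\xi}{\vth}|\le 2^{-2j+12}\}$---not merely an annulus. This plate support allows one to insert, for free, a convolution with a genuine tube kernel $\Psi^{j,\theta_1,\theta_2}\in \cS^{\theta_i,j}_{\nu-j}$ together with $P_{\nu-j}$, acting on $w$ \emph{inside} the $q$-th power. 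The averaging identity \eqref{averaging} is used only afterwards, in the form of translations $\tau_h f_i$, and contributes the outer $H_j$. Your annular cutoff $\tilde K^j$ correctly detects the separation $|\theta_1-\theta_2|\approx 2^{-j}$, but discards the finer plate geometry that is essential for landing in $\cW^{j-\nu}_{q,\delta,0}$.
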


We now introduce some basic decompositions.
As in the definition \eqref{Qjth} let $\zeta\in C^\infty_c(\bbR^d)$
be supported in $\{y:|y|\le 1/8\}$, now with the additional assumption that
$\zeta (y)=1$ for $|y|\le 1/9$.
Let $\widetilde \zeta\in C^\infty_c(\bbR^d)$
be supported in $\{y:|y|\le 1/7\}$ so that
$\widetilde \zeta (y)=1$ for $|y|\le 1/8$; hence
$\widetilde \zeta\zeta=\zeta$.
 Let $\varphi$
be a smooth function supported in $[-9/8,9/8]$ and equal to 1 on
$[-7/8,7/8]$ such that $$\sum_{n\in\Z}\varphi(\,\cdot\,-n)=1.$$
We let $\Theta_j$ be a maximal $2^{-j-d}$-separated set of $S^{d-1}$ and
define  for $n\in \bbZ$, $l\in \bbZ$,  operators via the Fourier transform by
\begin{align*}
\cF[Q^{\theta,j}_n  f](\xi) &=  \frac{\zeta(2^j(\tfrac{\xi}{|\xi|}-\theta))}
{\sum_{\theta'\in \Theta_j}\zeta^2(2^j(\tfrac{\xi}{|\xi|}-\theta'))}
\vphi(2^j|\xi|-n) \widehat f(\xi),
\\
\cF[P^{\theta,j}_l  f](\xi) &=  \widetilde \zeta(2^j(\tfrac{\xi}{|\xi|}-\theta))
\vphi(2^{2j}|\xi|-l) \widehat f(\xi);
\end{align*}
moreover, with $Q^{\theta,j}$ as in  \eqref{Qjth} set
$$ \cQ^{\theta,j}= \sum_n Q^{\theta,j}Q^{\theta,j}_n$$
so that
$$
T^\delta_t f \,= \,
\sum_{\theta\in  \Theta_j}T^\delta_t \cQ^{\theta,j}f\,=\,
\sum_{\theta\in \Theta_j}
\sum_{n\in \bbZ} \sum_{l\in \bbZ}
T^\delta_t Q^{\theta,j}
Q^{\theta,j}_n
P^{\theta,j}_lf,
$$
for both cases $j<j_\circ$ and $j=j_\circ$.

Note that the multipliers for $Q^{\theta,j}$ and $\cQ^{\theta,j}$ are
 contained in a sector of width $c2^{-j}$ around $\theta$. The multiplier for
$Q^{\theta,j}_n $ is contained in a $c2^{-j}$ ball centered around $\Xi$ with
$|\Xi|=2^{-j}n +O(2^{-j})$.
The multiplier for
$P^{\theta,j}_l$ is contained in a plate with $(d-1)$ sides
of length $O(2^{-j})$ and a short side of length $O(2^{-2j})$, the long sides
being perpendicular to $\theta$ .
Note that for $2^{-2j}m\in [1,2]$
\Be
\label{tnmloc}
\begin{aligned}
&T^\delta_t Q^{\theta,j}  Q^{\theta,j}_n
P^{\theta,j}_l\neq 0  \text{ for some } t\in [(m-1)2^{-2j}, (m+1)2^{-2j}],\quad
\\ &\qquad \implies
|m-l|\lc 1, \quad
|2^{-2j}l-2^{-j} n|\lc 2^{-j}
\,.
\end{aligned}
\Ee

We also notice that,  from the  localization and separation properties
 of the cutoff functions
$\chi_\circ(2^{j_\circ}(\xi-\eta))$ and  $\chi_1(2^{j}(\xi-\eta))$
in \eqref{bildef},
\Be\label{thetasep}
\begin{aligned}
\fB_\circ(T^\delta_t Q^{\theta,j} g, \overline{T^\delta_t Q^{\theta',j} g})\neq 0 \,
&\implies \dist(\theta,\theta')\le C 2^{-j_\circ}\, ,
\\
\cB_j(T^\delta_t Q^{\theta,j} g, \overline{T^\delta_t Q^{\theta',j} g})\neq 0
&\implies \, 2^{-j+3}\le \dist(\theta,\theta')\le 2^{-j+6}
\, .
\end{aligned}\Ee

\subsection*{Proof of Proposition ~\ref{Bcirc}}
We  first observe by a straightforward integration by parts
that for $t\in [1,2]$ the convolution kernel associated with 
$T^{\delta}_tQ^{\theta,j_{\circ}}$
is dominated by
$$C_d \frac{ \delta^{\frac{d+1}{2}}}{(1+\delta|\inn x\theta|+
\delta^{1/2}|x-\inn{x}{\theta}\theta|)^N} \lc
\cK^{\theta,j_{\circ}}_{2^{-j_{\circ}}}(x) \approx
\cK^{\theta,j_{\circ}}_{2^{j_{\circ}}\delta}(x). $$
On the right hand side
we may also replace $\theta$ by any $\widetilde \theta$ with
$|\theta-\widetilde\theta|\lc 2^{-j_\circ}$.
For more compact notation we write
$$\cK^{\theta,j_\circ}(x):=\cK^{\theta,j_{\circ}}_{2^{-j_{\circ}}}(x). $$
Now for $|\theta_1-\theta_2|\lc 2^{-j_\circ}$ and each $t\in [1,2]$,   we have
\begin{equation}
\label{straightf}
\begin{aligned}
&\Big|\int
\prodstar[T^{\delta}_tQ^{\theta_i,j_{\circ}}g_i(x)]
w(x)\,dx\Big|
\lc
\int \Big[\prod_{i=1,2}(\cK^{\theta_i,j_{\circ}}*  |g_i|^2(x))^{1/2}\Big] |w(x)| dx
\\
&\lc
\prod_{i=1,2}\Big( \int
\cK^{\theta_i,j_{\circ}} * |g_i|^2(x)
 |w(x)| dx\Big)^{1/2}
\lc
\prod_{i=1,2}\Big( \int |g_i(y)|^2
\,\cK^{\theta_i,j_{\circ}}
*  |w|(y)\,dy\Big)^{1/2}\,;
\end{aligned}
\end{equation}
here we used $\|\cK^{\theta,j_{\circ}}\|_1=O(1)$ and applied the Schwarz
inequality twice.

By  Lemma ~\ref{carl}
\Be\label{pjlest}\sum_l \big |P^{\theta,j_\circ}_l f(x)|^2 \lc
\cK^{\theta_i,j_{\circ}} *|f|^2 (x).
\Ee
Now let  $I_m^{j_\circ}=[2^{-2j_\circ}m,
2^{-2j_\circ}(m+1)].$ It will be implicit in all $m$-summations that
$I_m^{j_\circ}\subset[1,2]$.
In what follows $\fA^m_{j_\circ}$ will be an index set consisting of
$(l_1,l_2,n_1,n_2)$ with
$|l_i-m|\lc 1$, $|2^{-2j_{\circ}}l_i-2^{-j_\circ}n_i|\lc 2^{-j_{\circ}}$ for $i=1,2$.
 Then
\begin{align*}
&\Big|
\sum_{\substack{\theta_1, \theta_2\in\Theta_j\\ |\theta_1-\theta_2|\le C2^{-j_\circ}}}
\int \int_1^2\prodstar[T^{\delta}_t \cQ^{\theta_i,j_\circ}f_i(x)]
\frac{dt}{t}w(x)\,dx\Big|
\\
=\ &
\Big|\sum_{\substack{\theta_1, \theta_2\in\Theta_j\\ |\theta_1-\theta_2|\le C2^{-j_\circ}}}
\sum_{m}
\int_{I^j_m}
\sum_{\substack{(l_1,l_2, n_1,n_2)\\ \in \fA^m_{j_\circ}} }
\int \prodstar[
T^{\delta}_t Q^{\theta_i,j_\circ}P^{\theta_i,j_\circ}_{l_i}Q^{\theta_i,j_\circ}_{n_i}
f_i(x)] w(x)\,dx \frac{dt}{t}
\Big|
\\
\lc\ & 2^{-2j_\circ}
\sum_{\substack{\theta_1, \theta_2\in\Theta_{j_\circ}\\ |\theta_1-\theta_2|\le C2^{-j_\circ}}}
\sum_{m}\sum_{\substack{(l_1,l_2, n_1,n_2)\\ \in \fA^m_{j_\circ}} }
\prod_{i=1,2}\Big( \int |
P^{\theta_i,j_\circ}_{l_i}Q^{\theta_i,j_{\circ}}_{n_i}
f_i(x)|^2
\,\cK^{\theta_i,j_{\circ}}
*  |w|(x)\,dy\Big)^{1/2};
\end{align*}
here we have applied \eqref{straightf} and carried out the
$t$ integration.
We now notice that
$|n_1-n_2|\lc 1$, $|l_1-l_2|\lc 1$ for
$(l_1,l_2,n_1,n_2)\in \fA^m_{j_\circ}$
and that for fixed
$(l_1,l_2,n_1,n_2)$ there are only $O(1)$ integers $m$ for which
$(l_1,l_2,n_1,n_2)\in \fA^m_{j_\circ}$.
Hence,  by various applications of the Schwarz inequality  and then
by \eqref{pjlest}
the last displayed quantity  is controlled by
\begin{align}
&2^{-2j_\circ} \prod_{i=1,2}
\Big(\sum_{\theta_i\in\Theta_{j_\circ}}\sum_{l_i}\sum_{n_i}\int
\big|P^{\theta_i,j_\circ}_{l_i}Q^{\theta_i,j_{\circ}}_{n_i}
f_i(x)\big |^2
\,\cK^{\theta_i,j_{\circ}}
*  |w|(x)\,dx\Big)^{1/2}
\notag
\\
\lc\ & 2^{-2j_\circ} \prod_{i=1,2}
\Big(\sum_{\theta_i\in\Theta_{j_\circ}}\sum_{n_i}\int
\big|Q^{\theta_i,j_{\circ}}_{n_i}
f_i(x)|^2
\,\cK^{\theta_i,j_{\circ}} *\cK^{\theta_i,j_{\circ}}
*  |w|(x)\,dy\Big)^{1/2}
\label{withQjthm}
\end{align}
where, by Lemma~\ref{prelptwise}, we may replace
$\cK^{\theta_i,j_{\circ}} *\cK^{\theta_i,j_{\circ}} $
with
$\cK^{\theta_i,j_{\circ}} $.

By Lemma~\ref{carl} we have, with  $H_{j_\circ}(x)=2^{-j_\circ d}
(1+2^{-j_\circ}|x|)^{-d-1}$,
$$
\sum_{\theta\in\Theta_{j_\circ}}\sum_{n}
\big|Q^{\theta,j_{\circ}}_{n} f|^2 \lc H_{j_\circ}*|f|^2 (x)
$$
so that we may estimate
the term \eqref{withQjthm}
by
\Be
2^{-2j_\circ} \prod_{i=1,2}
\Big(\int
|f_i(x)|^2
\,H_{j_\circ} *\sup_{\theta}[
\cK^{\theta,j_{\circ}} *  |w|](x)\,dx\Big)^{1/2}\,.
\Ee

Now by the definition of $\fB_\circ$, \eqref{averaging}
and translation invariance
\begin{align*}\fB_\circ [T^\delta_t f_1,\overline{T^{\delta}_t f_2}](x)
&=\sum_{\substack{\theta_1, \theta_2\in\Theta_{j_\circ}\\ |\theta_1-\theta_2|\le C2^{-j_\circ}}}
\fB_\circ [T^\delta_t \cQ^{\theta_1,j_\circ}f_1,
\overline{T^{\delta}_t\cQ^{\theta_2,j_\circ} f_2}](x)
\\&=\sum_{\substack{\theta_1, \theta_2\in\Theta_{j_\circ}\\ |\theta_1-\theta_2|\le C2^{-j_\circ}}}
\frac{1}{(2\pi)^{d}}\int 2^{-j_\circ d}\widehat \chi_\circ (2^{-j_\circ}h)
\prodstar [T^\delta_t \cQ^{\theta_i,j_\circ}\tau_hf_i] (x)\, dh
\end{align*}
where $\tau_h f(x)=f(x-h)$.
We combine this identity  with the previous estimate
and the obvious inequality   $|2^{-j_\circ d}\widehat \chi_\circ (2^{-j_\circ}\cdot)|\lesssim H_{j_\circ}$
 to obtain
\begin{align*}
&\Big|\int \int_{1}^{2}
\fB_\circ [T^\delta_t f_1,\overline{T^{\delta}_t f_2}](x) \frac{dt}{t}\,
w(x)\,dx\Big|
\\ \lc\ & \int H_{j_\circ}(h)
\Big|\int \int_{1}^{2}\sum_{\substack{\theta_1, \theta_2\in\Theta_{j_\circ}\\ |\theta_1-\theta_2|\le C2^{-j_\circ}}}
\prodstar [T^\delta_t \cQ^{\theta_i,j_\circ}\tau_hf_i](x)
\frac{dt}{t}\,
w(x)\,dx\Big| \, dh
\\ \lc\ & \int H_{j_\circ}(h)
2^{-2j_\circ} \prod_{i=1,2}
\Big(\int
|f_i(x-h)|^2
\,H_{j_\circ} *\sup_{\theta}[\cK^{\theta,j_{\circ}}
*  |w|](x)\,dx\Big)^{1/2}
dh \,.
\end{align*}
 Clearly $\sup_{\theta}[\cK^{\theta,j_{\circ}} *|w|]\lc
\fM_{\sqrt\delta}[w].$
By the  Schwarz inequality and a subsequent
change of variable the last displayed quantity is bounded  by
$$
C\delta \prod_{i=1,2}
\Big(\sum_{\theta_i\in\Theta_{j_\circ}}\int
|f_i(x)|^2
\,H_{j_\circ}*H_{j_\circ} *\fM_{\sqrt\delta}[w](x)\,dx\Big)^{1/2}\,.
$$
Since $H_{j_\circ}*H_{j_\circ}(x)\lc \delta^{d/2}(1+\delta^{1/2}|x|)^{-d-1}$,
by Lemma~\ref{prelptwise},
  this concludes the proof of the proposition.
\qed

\subsection*{Proof of Proposition ~\ref{Bj}}
We fix $j<j_\circ$ so that $2^{2j}\delta < 1/2$.
We define that  $\theta\sim \theta'$ if $\theta\in \Theta_j$,
$\theta'\in \Theta_j$ and $2^{-j+3}\le |\theta-\theta'|\le 2^{-j+6}$; this is the relevant range in \eqref{thetasep}.

Below we shall prove the estimate
\begin{multline} \label{wopsi}
\Big|\int\int_1^2\prodstar [T^\delta_t \cQ^{\theta_i,j} f_i(x)]
 \frac{dt}{t}\,
w(x) \, dx\Big|
\\ \lc
\delta^{2-\frac dq} 2^{-2j (\frac{d}{q}-1)}\prod_{i=1,2} \Big(\sum_n\int
|Q^{\theta_i,j}_nf_i(x)|^2  (\cK^{\theta_i,j}_{2^j\delta}* |w|^q(x))^{1/q} dx\Big)^{1/2}
\end{multline}
for any  $\theta_1, \theta_2$ with $\theta_1\sim\theta_2$.
We first show why \eqref{wopsi}  implies the asserted estimate.

It is  crucial  to observe that for $\theta_1\sim\theta_2$ and
 any $g_1,g_2$
the Fourier transform of the product
$T^\delta_t Q^{\theta_1,j}g_1\, \overline{T^\delta_t Q^{\theta_2,j}g_2}$
 is supported in
$$\{\xi: 2^{-j+2}\le |\xi|\le 2^{-j+6}, \,\,|\inn\xi \vth|\le 2^{-2j+12}\}\,;$$
here $\vth=\frac{\theta_1+\theta_2}{|\theta_1+\theta_2|}$.
Let $\eta_0\in C^\infty_c(\bbR)$ be even and  supported in $[-2^{13},2^{13}]$
so that $\eta_0(s)=1$ for $|s|\le 2^{12}$ and let
$\eta_1\in C^\infty_c(\bbR)$ be supported in $[2, 128]$ so that  $\eta_1(s)=1$ for $s\in [4, 64]$.
Consider the even  Schwartz function $\Psi\equiv \Psi^{j,\theta_1,\theta_2}$ defined by
$$\widehat {\Psi^{j,\theta_1,\theta_2}}(\xi)= \eta_0(2^{2j}\inn {\xi}{\vth})\eta_1(2^j|\xi|).$$
Note that there is a constant $C$ (independent of $j,\theta_1, \theta_2$) so that
$$C^{-1}\Psi^{j,\theta_1,\theta_2} \in \cS^{\theta_i, j}_{j-\nu}$$
for  $\nu=0,\dots, 8$, $i=1,2$.

From these considerations it follows that
$$\int \prodstar[T^\delta_t \cQ^{\theta_i,j}f_i](x)
w(x)\,dx
=\sum_{|\nu|\le 8} \int \prodstar[T^\delta_t \cQ^{\theta_i,j}f_i](x)
\Psi^{j,\theta_1,\theta_2}\!*
P_{\nu-j}w(x) \,dx$$
and therefore, for any  $\theta_1, \theta_2$ with $\theta_1\sim\theta_2$,
 \eqref{wopsi} can be changed to
\Be\label{wopsimod}
\Big|\int\int_1^2\prodstar [T^\delta_t \cQ^{\theta_i,j} ](x)  \frac{dt}{t}\,
w(x)\,dx\Big|
\lc
\prod_{i=1,2} \Big(\sum_n\int |Q^{\theta_i,j}_nf_i(x)|^2
U_{q,\delta}^{\theta_i,j} w(x)
\,dx\Big)^{1/2},
\Ee
where
$$U_{q,\delta}^{\theta,j  } w(x)=
\delta^{2-\frac{d}{q}}2^{-2j(\frac dq-1)}
\sum_{|\nu|\le 8}\big(\cK^{\theta,j}_{2^j\delta}*
\sup_{\Psi\in \cS^{\theta,j}_{\nu-j}}|\Psi*P_{\nu-j}w|^q(x)\big)^{1/q}\,.$$

By the definition of $\cB_j$ we have
\begin{align*}
\cB^j [T^\delta_t f_1,\overline{T^{\delta}_t f_2}](x)
&=\sum_{\theta_1\sim\theta_2}
\cB^j [T^\delta_t \cQ^{\theta_1,j}f_1,\overline{T^{\delta}_t\cQ^{\theta_2,j} f_2}](x)
\\
&=\sum_{\theta_1\sim\theta_2}(2\pi)^{-d} \int_h 2^{-jd}\widehat \chi_1(2^{-j}h)
\prodstar [T^\delta_t \cQ^{\theta_i,j}\tau_hf_i](x) dh\,.
\end{align*}
Therefore \eqref{wopsimod} yields
\begin{align*}
&\Big|\int\int_1^2 \cB^j [T^\delta_t f_1,\overline{T^{\delta}_t f_2}](x)
 \frac{dt}{t}\,w(x) \,dx\Big|
\\
\lc\ &
\int H_j(h) \prod_{i=1,2} \Big(\sum_{\theta_i\in \Theta_j}
\sum_n\int |Q^{\theta_i,j}_nf_i(x-h)|^2
U_{q,\delta}^{\theta_i,j} w(x)
\,dx\Big)^{1/2}
\\
\lc\ &
\prod_{i=1,2} \Big(\sum_{\theta_i\in \Theta_j}\sum_n\int |Q^{\theta_i,j}_nf_i(x)|^2
H_j*[\sup_{\theta}  U_{q,\delta}^{\theta,j} w](x)
\,dx\Big)^{1/2},
\end{align*}
by the Schwarz inequality.
Now, for $|\nu|\le 8$,  observe $\cK^{\theta, j}_{2^j\delta}\approx \cK^{\theta, j-\nu}_{2^{j-\nu}\delta}$
and therefore
$$\sup_{\theta}  U_{q,\delta}^{\theta,j} w(x)
\lc \delta^{2-\frac{d}{q}}2^{-2j(\frac dq-1)}
\cW^{j-\nu}_{q,\delta,0}w(x).$$ Moreover, by Lemma~\ref{carl},
$$\sum_{\theta\in \Theta_j}\sum_n|Q^{\theta,j}_nf(x)|^2  \lc H_j*|f|^2(x)\,.$$
Hence, using the Schwarz inequality again, we get
\begin{multline*}
\Big|\int\int_1^2 \cB^j [T^\delta_t f_1,\overline{T^{\delta}_t f_2}](x)
 \frac{dt}{t}\,w(x)\, dx\Big|
\\
\lc \delta^{2-\frac{d}{q}}2^{-2j(\frac dq-1)}
\max_{|\nu|\le 8}\prod_{i=1,2} \Big(\int|f_i(x)|^2 H_j*H_j*
\cW^{j-\nu}_{q,\delta,0}w(x)\, dx\Big)^{1/2}
\end{multline*}
and since $H_j*H_j\lc H_j$ we obtain the asserted estimate.

\subsubsection*{Proof of \eqref{wopsi}}
We argue as in the proof of Proposition~\ref{Bcirc} and rely on
Lemma~\ref{scaledbil}.
We let $I_m^j=[2^{-2j}m,2^{-2j}(m+1)]$ if this interval is a subset of $[1,2]$ (otherwise $I^j_m=\emptyset$).
Define the index sets
$\fA^m_j$ as in the proof of Proposition~\ref{Bcirc} (with $j$ instead of $j_\circ$).

Then the right hand side of \eqref{wopsi} is equal to
\begin{align*}
\Big|\sum_m \sum_{(l_1,l_2,n_1,n_2)\in \fA_j^m}
\int\int_{I^j_m}\prodstar [T^\delta_t Q^{\theta_i,j} Q^{\theta_i,j}_{n_i} P^{\theta_i,j}_{l_i} f_i(x)]  \frac{dt}{t}\,w(x) \, dx\Big|
\end{align*}
 and, by Lemma~\ref{scaledbil}, this is  estimated by
\begin{align*}&\sum_m \sum_{\substack {|l_1-m|\lc 1\\|l_2-m|\lc 1}} \sum_{|n_1-n_2|\lc 1}\delta^{2-\frac{d}{q}}2^{-2j(\frac dq-1)}
\prod_{i=1,2}\Big(
\int\big| P^{\theta_i,j}_{l_i} Q^{\theta_i,j}_{n_i} f_i(x)\big|^2
\big(\cK^{\theta_i, j}_{2^j\delta} *|w|^q(x)\big)^{1/q}  \, dx\Big)^{1/2}
\\
&\,\lc\, 
\delta^{2-\frac{d}{q}}2^{-2j(\frac dq-1)}\prod_{i=1,2}\Big(\sum_{n_i}
\sum_{l_i}\int
\big| P^{\theta_i,j}_{l_i} Q^{\theta_i,j}_{n_i} f_i(x)\big|^2
\big(\cK^{\theta_i,j}_{2^j\delta} *|w|^q(x)\big)^{1/q}  \, dx\Big)^{1/2}
\\
&\,\lc\, 
\delta^{2-\frac{d}{q}}2^{-2j(\frac dq-1)}\prod_{i=1,2}\Big(\sum_{n_i}
\int\big| Q^{\theta_i,j}_n f_i(x)\big|^2
\cK^{\theta_i,j}_{2^{-j}}*\big(\cK^{\theta_i,j}_{2^j\delta} *|w|^q(x)
\big)^{1/q}
\, dx\Big)^{1/2}\,;
\end{align*}
here we have used the  Schwarz inequality  and the bound
$$\sum_l| P^{\theta,j}_l g(x)|^2 \lc  \cK^{\theta, j}_{2^{-j}}* |g|^2(x) $$
which is a consequence of Lemma~\ref{carl}.
We also have
$$\cK^{\theta, j}_{2^{-j}}*
\big(\cK^{\theta, j}_{2^j\delta} *|w|^q(x)\big)^{1/q}
\lc \big(\cK^{\theta, j}_{2^{-j}}*
\cK^{\theta, j}_{2^j\delta} *|w|^q(x)\big)^{1/q}
\lc \big(\cK^{\theta, j}_{2^j\delta} *|w|^q(x)\big)^{1/q} ;
$$
where the first estimate  follows from H\"older's inequality and the
second from \eqref{cKconv} and the assumption  $2^j\delta<2^{-j}$.
This completes the proof of \eqref{wopsi} and thus the proposition is established.
\qed


\section{$L^p(L^2)$ estimates for solutions of Schr\"odinger and wave equations}
\label{wschrsect}
\begin{proposition}\label{wavefrloc}
Let $d\ge 2$ and  $p\in(\frac{2(d+2)}{d},\infty]$, or $d=1$ and $p=\infty$, and let $a\in(0,\infty)$.
Let $I$ denote a compact interval of time. Then for $k\ge 1$,
\begin{equation}\label{psaloc}
\Big\|\Big(\int_I|U^a_t P_kf|^2 \,dt\Big)^{1/2}\Big\|_{p}
\lc 2^{k a \la(p)}\, \|f\|_{L^p},\quad \la(p)=d\Big(\frac 12-\frac 1p\Big)-\frac 12.
\end{equation}
\end{proposition}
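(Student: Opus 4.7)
The plan is to reduce the claim to the local Bochner--Riesz square function estimate of Corollary~\ref{Tdeltatcor} by applying Plancherel's theorem in the time variable. Fix a Schwartz function $\psi$ on $\bbR$ with $\psi\ge 1$ on $I$ and extend the integral: $\int_I|U^a_t P_k f|^2\, dt\le \int|\psi(t)U^a_t P_k f|^2\, dt$. Setting $F(t)=\psi(t)U^a_t P_k f(x)$ and applying Plancherel's theorem in $t$ yields
\[
\int|F(t)|^2\, dt=\frac{1}{2\pi}\int_{\bbR}|R_\tau f(x)|^2\, d\tau,
\]
where $R_\tau$ is the radial Fourier multiplier with symbol $\chi(2^{-k}|\xi|)\widehat{\psi}(\tau-|\xi|^a)$ and $\chi$ is a bump equal to one on the support of the cutoff defining $P_k$.

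Next I would parabolically rescale to unit frequency. Setting $g(x)=2^{-kd}f(2^{-k}x)$, so that $\|g\|_p=2^{-kd(1-1/p)}\|f\|_p$, and substituting $\eta=2^{-k}\xi$, $\tau=2^{ka}\sigma$, one finds
\[
\Bigl\|\Bigl(\int|R_\tau f|^2\, d\tau\Bigr)^{1/2}\Bigr\|_p=2^{ka/2+kd(1-1/p)}\Bigl\|\Bigl(\int|\widetilde R_\sigma g|^2\, d\sigma\Bigr)^{1/2}\Bigr\|_p,
\]
where $\widetilde R_\sigma$ has multiplier $\chi(|\eta|)\widehat{\psi}(2^{ka}(\sigma-|\eta|^a))$. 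With $t=\sigma^{1/a}$ I may factor
\[
\sigma-|\eta|^a=(1-|\eta|^2/t^2)\beta(\eta,t),\qquad \beta(\eta,t)=\sigma\frac{1-(|\eta|/t)^a}{1-(|\eta|/t)^2},
\]
and $\beta$ is smooth and bounded away from zero on any thin shell $||\eta|/t-1|\le 1/4$. This exhibits $\widetilde R_\sigma$ as precisely an operator of the form $\cS^\delta_t$ from Corollary~\ref{Tdeltatcor}, with $\delta\sim 2^{-ka}$ and $t$ ranging in a compact subinterval of $(0,\infty)$.

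To expose the thin-shell parameter $\delta$ I would dyadically decompose $\widehat{\psi}=\sum_{n\ge 0}\widehat{\psi}_n$, with $\widehat{\psi}_n$ supported on $\{|y|\asymp 2^n\}$ for $n\ge 1$ and $\widehat{\psi}_0$ supported on $\{|y|\le 1\}$; Schwartz decay gives $|\partial^\nu_y\widehat{\psi}_n|\lc 2^{-nN}$ for any $N$ and $\nu\le d+2$. The piece $\widetilde R^{(n)}_\sigma$ with multiplier $\chi(|\eta|)\widehat{\psi}_n(2^{ka}(\sigma-|\eta|^a))$ then fits $\cS^{\delta_n,\phi_n}_t$ with $\delta_n=2^{n-ka}$ and $\phi_n$ a bump supported in $(1/2,2)\cup(-2,-1/2)$ whose normalization \eqref{schwartzestimates} holds up to an overall factor $2^{-nN}$. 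For $n<ka-1$, applying Corollary~\ref{Tdeltatcor} (with a routine reflection argument for the negative branch) yields
\[
\Bigl\|\Bigl(\int|\widetilde R_\sigma^{(n)} g|^2\, d\sigma\Bigr)^{1/2}\Bigr\|_p\lc 2^{-nN}\,2^{(n-ka)(1-d(1/2-1/p))}\|g\|_p,
\]
and the remaining contributions ($n\ge ka-1$ and $\sigma$ outside a bounded interval) are handled by crude $L^p\to L^p$ multiplier bounds together with the rapid Schwartz decay of $\widehat{\psi}$. Summing the geometric series in $n$ produces the factor $2^{ka(d(1/2-1/p)-1)}$, which combined with the rescaling prefactor $2^{ka/2+kd(1-1/p)}$ and $\|f\|_p=2^{kd(1-1/p)}\|g\|_p$ delivers the claimed bound $2^{ka\lambda(p)}\|f\|_p$.

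The main technical obstacle is the bookkeeping of the dyadic decomposition: verifying that the rescaled bumps $\phi_n$ satisfy the normalization \eqref{schwartzestimates} required by Corollary~\ref{Tdeltatcor}, that the factor $\beta$ is smooth and bounded below on the relevant shells, and that the transition terms $n\approx ka$ as well as the negative-branch contributions can be absorbed by rapid decay. The special case $d=1$, $p=\infty$ (for which $\lambda(\infty)=0$) does not invoke the bilinear restriction machinery and follows from a direct Plancherel-in-$\xi$ argument using the compactness of $I$.
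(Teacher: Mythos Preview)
Your approach is correct and follows the same strategy as the paper: apply Plancherel in the time variable to convert the $L^2(I)$ norm into a square function over thin spectral shells, rescale to unit frequency, and invoke Corollary~\ref{Tdeltatcor}. The paper, however, avoids your dyadic decomposition of $\widehat{\psi}$ entirely by a sharper choice of time cutoff: it picks an even real-valued $\phi$ with $\supp\phi\subset[-1/4,1/4]$ and $\widehat{\phi}>c>0$ on $I$, and multiplies $U^a_tP_kf$ by $\widehat{\phi}(t)$. After Plancherel the spatial multiplier becomes $\phi(|\xi|^a-\tau)$, which is already \emph{compactly} supported in the variable $|\xi|^a-\tau$. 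This immediately produces a single thin shell of width $\delta\approx 2^{-ka}$, and one lands directly on an operator of the form $\cS^\delta_t$ with no summation in $n$, no tail terms, and no negative-branch bookkeeping. Your argument works, but the paper's choice of cutoff makes the ``main technical obstacle'' you flag simply disappear; it is worth remembering that when one intends to take a Fourier transform of a localizing factor, one may as well arrange for that transform to have compact support.
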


\begin{proof} [Proof of Theorem~\ref{wavey}]
The result is an immediate consequence of
Proposition~\ref{wavefrloc} and the case  $q=2$, $r=1$, $\wp=p$
of
Proposition ~\ref{combin}.
\end{proof}

\begin{proof} [Proof of Proposition~\ref{wavefrloc}]
We may assume that  $2^k$ is large.
Let $\phi$ be an even real-valued  function
so that
$\widehat \phi(t)>c>0$ for $t\in I$ and $\supp \phi\subset [-1/4,1/4]$.
It suffices to estimate the $L^p(\bbR^d,L^2(\bbR))$ norm of
$\widehat\phi(t)U^a_t P_kf(x)$.
For fixed $x$ the $L^2(\bbR)$ norm of this expression is equal to
\begin{align*}
&\frac{1}{(2\pi)^{1/2}}
\Big(\int_{\bbR}\Big| \int \widehat \phi(t) \exp(-\im t\tau)
U^a_t P_kf(x) \,dt\Big|^2 d\tau\Big)^{1/2}
\\
=\ &\frac{1}{(2\pi)^{1/2}} \Big(\int_{2^{(k-3)a}}^{2^{(k+3)a}}\Big| \cF^{-1} \big[
\phi(|\,\cdot\,|^a-\tau) \chi(2^{-k}|\,\cdot\,|)\widehat f \,\big](x)\Big|^2 d\tau \Big)^{1/2}.
\end{align*}
By a finite splitting
we may replace
the integral over $[2^{(k-3)a}, 2^{(k+3) a}]$ by an integral over $((2^kT)^a,
(2^{k+1}T)^a)$ with $T\approx 1$. After changing variables $\tau=(2^kT r)^a$ it suffices to
show that
$$
\Big\| \Big(\int_{1}^2\Big| \cF^{-1} \big[
\phi(|\,\cdot\,|^a-(2^kTr)^a) \chi(2^{-k}|\,\cdot\,|)\widehat f \,\big]\Big|^2 dr\Big)^{1/2}\Big\|_p\lc 2^{a(\la(p)-\frac 12) k} \|f\|_{p},
$$
or, after scaling and setting $\delta= (2^{k}T)^{-a}$,
\Be\label{BRsqfct}
\Big\| \Big(\int_{1}^2\Big| \cF^{-1} \big[
\phi( \delta^{-1} (|\,\cdot\,|^a-r^a))
\chi(T|\,\cdot\,|)\widehat f\,\big]\Big|^2 dr\Big)^{1/2}\Big\|_p\lc \delta^{\frac 12-\la(p)} \|f\|_{p}\,.
\Ee
But as $\phi$ is even,
$\phi( \delta^{-1} (|\xi|^a-r^a))  =\phi\big(\delta^{-1} \beta(\xi,r)
(1-|\xi|^2/r^2)\big)
$ where $\beta(\xi,r)= r^2\frac{r^a-|\xi|^a}{r^2-|\xi|^2}$ is smooth for $\xi$ away from the origin, and nonvanishing.
Thus Corollary~\ref{Tdeltatcor}
may be applied and  we get the $L^p$ inequality
\eqref{BRsqfct} for $d\ge 2$ and $p>2 +4/d$.

The case $d=1$, $p=\infty$ is more straightforward; the estimate
\Be\label{BRsqfctinfty}
 \Big(\int_{1}^2\big| \cF^{-1} \big[
\phi( \delta^{-1} (|\,\cdot\,|^a-r^a))
\chi_1(T|\,\cdot\,|)\widehat f\,\big](x)\big|^2 dr\Big)^{1/2}\lc \delta^{1/2}
 \|f\|_{\infty}
\Ee
for $T\approx 1$ follows from
$$
\sum_{0\le n< \delta^{-1}}
\big| \cF^{-1} \big[
\phi( \delta^{-1} (|\,\cdot\,|^a-(1+n\delta+\sigma)^a)
\chi_1(T|\,\cdot\,|)\widehat f\,\big](x)\big|^2 \lc
 \|f\|_{\infty}^2, \qquad 0<\sigma\le \delta,
$$
and integration in $\sigma$. The last displayed inequality however
is a consequence of   Lemma~\ref{carl}.
\end{proof}

We finish by stating a global variant of the one-dimensional square function estimate which does not use Sobolev spaces and which we will not use elsewhere in the paper.

\begin{proposition}\label{combin2} Let $d=1$, $p\in[2,\infty),$ $a\in(0,\infty)$,  and let $I$ be a compact interval.
Then
$$
\Big \|\Big(\int_I|U^a_tf|^2\,dt\Big)^{1/2}\Big\|_{L^p(\bbR)}
\lc\, \|f\|_{L^p(\bbR)}.
$$
Moreover
$$\Big \|\Big(\int_I|U^a_t f|^2\, dt\Big)^{1/2}
\Big\|_{BMO(\bbR)} \lc\|f\|_{L^\infty(\bbR)}.
$$
\end{proposition}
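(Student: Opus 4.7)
The plan is to deduce both inequalities from a trivial $L^2$ bound and an endpoint $L^\infty\to\mathrm{BMO}$ bound, interpolating in between. Writing $Sf(x) = (\int_I |U^a_t f(x)|^2\,dt)^{1/2}$, Plancherel and Fubini give
\[
\|Sf\|_2^2 = \int_I \|U^a_t f\|_2^2\,dt = |I|\,\|f\|_2^2,
\]
since $U^a_t$ is unitary on $L^2$.

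For the $L^\infty\to\mathrm{BMO}$ endpoint, the key input is the uniform frequency-localized bound $\|S(P_k f)\|_\infty \lesssim \|f\|_\infty$ for $k\ge 1$, which is Proposition~\ref{wavefrloc} in the case $d=1$, $p=\infty$ (so $\la(\infty)=0$) and is obtained via Carleson's lemma (Lemma~\ref{carl}) as in \eqref{BRsqfctinfty}; the low-frequency contribution is handled by the trivial pointwise bound $|U^a_t P_{\le 0}f(x)|\lesssim\|f\|_\infty$. To pass from these frequency-localized bounds to a global $\mathrm{BMO}$ bound I would localize spatially: given an interval $J$ of length $\ell$ centered at $x_0$, split $f = f\chi_{J^*}+f\chi_{\mathbb{R}\setminus J^*}$ with $J^*=\lambda J$ for a sufficiently large $\lambda$ depending on $|I|$. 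For the local piece, the $L^2$ bound together with the Cauchy--Schwarz inequality gives
\[
|J|^{-1}\int_J S(f\chi_{J^*})\,dx \le |J|^{-1/2}\|S(f\chi_{J^*})\|_2 \lesssim |J|^{-1/2}|I|^{1/2}|J^*|^{1/2}\|f\|_\infty \lesssim \|f\|_\infty.
\]
For the far piece the aim is to show that $S(f\chi_{\mathbb{R}\setminus J^*})$ varies by at most $O(\|f\|_\infty)$ on $J$. I would achieve this by a Littlewood--Paley decomposition, estimating $\|U^a_t P_k(f\chi_{\mathbb{R}\setminus J^*})(x_1)-U^a_t P_k(f\chi_{\mathbb{R}\setminus J^*})(x_2)\|_{L^2_t(I)}$ for $x_1,x_2\in J$ by combining the pointwise dispersive decay of the kernel of $U^a_t P_k$ with the oscillation factor $|x_1-x_2|\cdot 2^k$, and then summing in $k$; the $\mathrm{BMO}$ centering constant is $S(f\chi_{\mathbb{R}\setminus J^*})(x_0)$.

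The $L^p$ bound for $p\in[2,\infty)$ then follows by the standard Fefferman--Stein interpolation for sublinear operators via the sharp maximal function between the $L^2$ and $L^\infty\to\mathrm{BMO}$ endpoints. The principal obstacle lies in the far-field estimate for $\mathrm{BMO}$: for generic $a$ the kernels of $U^a_t$ are oscillatory with only polynomial (and for $a=2$ none at all) spatial decay, so the vector-valued Hörmander condition on the $L^2(I)$-valued kernel of $f\mapsto(U^a_t f)_{t\in I}$ is not immediate. The resolution exploits cancellation from both the temporal $L^2(I)$ integration (which, by Plancherel in $t$, converts phase oscillation into spectral localization on thin annular shells around $|\xi|^a\approx\tau$, thereby producing genuine decay) and the Littlewood--Paley decomposition, whose dyadic pieces enjoy the sharp no-loss bound \eqref{BRsqfctinfty} at $p=\infty$ in one dimension.
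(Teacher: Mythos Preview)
Your $L^2$ bound, the near-field step in the BMO argument, and the interpolation strategy are all fine. The gap is exactly where you flag it: the far-field BMO estimate. The direct Calder\'on--Zygmund route you describe does not close. For $2^k\ell\gg 1$ the oscillation factor $|x_1-x_2|\,2^k$ is useless, and the kernel of $U^a_tP_k$ for $t\in I$ is \emph{not} small at distance $\sim\lambda\ell$: by stationary phase its mass sits at $|z|\sim 2^{k(a-1)}$ (and for $a=2$ at $|z|\sim 2^k$), which for large $k$ is far outside any fixed dilate $J^*=\lambda J$. So the far piece $S(f\chi_{\bbR\setminus J^*})$ need not have small oscillation on $J$ at these frequencies, and summing in $k$ diverges. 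Your ``resolution'' paragraph gestures at the right move --- Plancherel in $t$ --- but then stops: saying that spectral localization on thin intervals ``produces genuine decay'' does not by itself sum the $k$-pieces or control the far field.

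The paper's argument is precisely to carry out that Plancherel-in-$t$ reduction (the same computation as in the proof of Proposition~\ref{wavefrloc}) and then recognize the resulting object: in one dimension the multipliers $\phi(|\cdot|^a-\tau)$ are smooth bumps on essentially disjoint intervals, so the square function is a regularized version of Rubio de Francia's square function for arbitrary disjoint intervals. The $L^p$ bound for $2\le p<\infty$ is then Rubio de Francia's theorem \cite{ru1}, and the $L^\infty\to BMO$ bound is Sj\"olin's endpoint version \cite{sj2}. These are the nontrivial external inputs that your sketch is missing; once you name them the proof is complete, and there is no need for the near/far splitting or the interpolation step (though the latter would also work once you have Sj\"olin's bound in hand).
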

Given the reduction in the localized case, in the proof of Proposition
\ref{wavefrloc}, the $L^p(L^2)$ estimates can be deduced
from a regularized version of
Rubio de Francia's square function estimate~\cite{ru1}
associated with
arbitrary disjoint  collection of intervals.
The $L^\infty$-$BMO$ estimate can be obtained from Sj\"olin's proof
\cite{sj2}  of that estimate. We omit the details.

\section{An $L^p(L^q)$ estimate}\label{LpLqsect}

We state the
$L^p(L^q)$ estimates alluded to in the introduction.
We work with the norm
$$\|u\|_{L^p(L^q(I))}
= \Big\|\Big(\int_I|u(\,\cdot\,,t)|^q dt\Big)^{1/q}\Big\|_{L^p(\bbR^d)}$$
in $L^p(\bbR^d;L^q(I)),$ with  the usual  modification
$\|u\|_{L^p(L^\infty(I))}
= \|\sup_{t\in I}|u(\,\cdot\,,t)|\|_p$ if $q=\infty$.

\begin{theorem} \label{Schrthm}
Let  $a\in(1,\infty)$
and let $I$ be a compact interval of time. Then
\begin{equation}\label{LpLqschr}
\big\|U^a f\big\|_{L^p(L^q(I))} \lc \|f\|_{L^p_s}, \quad \frac {s}{a}=
d\Big( \frac 12-\frac 1p\Big)- \frac 1q
\end{equation}
holds true in each of the following three  cases:

(i) $d=1$, $4<p<\infty$, $\frac{2p}{p-2}<q\le \infty$.

(ii) $d\ge 2$, $\frac{2(d+3)}{d+1}<p\le \frac{2(d+2)}d$, $\frac{2p}{(d+1)p-2(d+2)}<q\le\infty$.

(iii)   $d\ge 2$, $\frac{2(d+2)}{d}<p<\infty$, $2\le q\le \infty$.


\end{theorem}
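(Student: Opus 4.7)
\medskip
\noindent\textbf{Proof plan.} By Proposition~\ref{combin} of Appendix~\ref{localtoglobal} (applied with the $L^p(L^q(I))$ norm on the left and the $L^p_s$ norm on the right), the theorem reduces, modulo a Littlewood--Paley summation, to proving the frequency-localized bound
\begin{equation}\label{locpqplan}
\Big\|\Big(\int_I |U^a_t P_k f|^q\,dt\Big)^{1/q}\Big\|_p \lc 2^{ka\sigma(p,q)}\|f\|_p,\qquad \sigma(p,q)=d\Big(\tfrac{1}{2}-\tfrac{1}{p}\Big)-\tfrac{1}{q},
\end{equation}
uniformly in $k\ge 1$, in each of the three regimes. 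For Case (iii), Proposition~\ref{wavefrloc} already gives \eqref{locpqplan} at $q=2$. To reach $q=\infty$, I would combine the identity $\partial_t U^a_t P_k f=\im(-\Delta)^{a/2}U^a_t P_k f$ with Bernstein's inequality (the frequency support of $P_k f$ lies in $|\xi|\approx 2^k$) to obtain $\|\partial_t U^a_t P_k f\|_p\lc 2^{ka}\|U^a_t P_k f\|_p$. A one-dimensional Sobolev embedding in $t$ then converts the $L^p_x L^2_t$ estimate into the maximal bound
\[
\Big\|\sup_{t\in I}|U^a_t P_k f|\Big\|_p\lc 2^{ka/2}\Big\|\Big(\int_I |U^a_t P_k f|^2\,dt\Big)^{1/2}\Big\|_p,
\]
which matches \eqref{locpqplan} at $q=\infty$. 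H\"older interpolation in $t$ then yields \eqref{locpqplan} for every $q\in[2,\infty]$.

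\medskip
For Cases (i) and (ii), I would interpolate \eqref{locpqplan} between two endpoints, together with the maximal $L^p(L^\infty)$-bound of Case (iii). The first endpoint is the $L^{p_0}(L^2)$ estimate with $p_0=\tfrac{2(d+2)}{d}$ (from Proposition~\ref{wavefrloc} combined with the trivial $L^2$-identity when $d\ge 2$; when $d=1$, Proposition~\ref{combin2} furnishes an $L^\infty(L^2)$ endpoint with zero Sobolev loss). The second is the diagonal Strichartz-type estimate
\[
\|U^a_t P_k f\|_{L^{p_1}(L^{p_1}(I))}\lc 2^{ka\sigma(p_1,p_1)}\|f\|_{p_1},\qquad p_1=2+\tfrac{4}{d+1},
\]
proved in~\cite{rose} (for $d=1$ with any $a\ne 0,1$, and for $d\ge 2$ under the standing assumption $a>1$, which ensures the level sets of $|\xi|^a$ have nonvanishing Gaussian curvature). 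In $(1/p,1/q)$-coordinates the boundary curves $\tfrac{1}{p}+\tfrac{1}{q}=\tfrac{1}{2}$ (Case (i)) and $\tfrac{d+2}{p}+\tfrac{1}{q}=\tfrac{d+1}{2}$ (Case (ii)) are precisely the line segments joining the $L^{p_0}(L^2)$ and $L^{p_1}(L^{p_1})$ endpoints identified above. Bilinear real interpolation between these two endpoints, combined with the $L^p(L^\infty)$-bound of Case (iii) along the edge $1/q=0$, then fills the open interior of the stated admissible region.

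\medskip
The main analytic input is packaged into Proposition~\ref{wavefrloc} (which rests on Theorem~\ref{steinsquarelp}), Proposition~\ref{combin2}, and the Strichartz endpoint of~\cite{rose}; granting these, the argument is a mechanical combination of the time-Bernstein estimate, real interpolation, and the Besov-type summation of Proposition~\ref{combin}. The most delicate point will be to verify that the strict inequalities in the hypothesis correspond exactly to the open interior of the interpolation polygon, i.e.\ that the sharp Strichartz exponent $p_1=2+\tfrac{4}{d+1}$ and the sharp square-function exponent $p_0=\tfrac{2(d+2)}{d}$ both sit on the stated boundary curve so that the hypothesized open region is strictly contained in the accessible range.
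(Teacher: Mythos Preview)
Your approach is essentially the paper's: reduce to the frequency-localized estimate, use Proposition~\ref{wavefrloc} (i.e.\ Theorem~\ref{wavey}) for the $q=2$ input, use the results of~\cite{rose} for the $p\le q$ input, interpolate, and then sum the dyadic pieces via Proposition~\ref{combin}. Two small corrections: for $d=1$ the $L^\infty(L^2)$ frequency-localized bound is the $p=\infty$ case of Proposition~\ref{wavefrloc}, not Proposition~\ref{combin2}; and the $q=\infty$ edge you invoke for Cases~(i) and~(ii) lies in the range $p\le 2(d+2)/d$, so it comes from~\cite{rose} (or equivalently from your time-Bernstein/Sobolev argument applied to the interpolated line), not from Case~(iii) of the present theorem. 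Also note that your ``endpoints'' $p_0=2(d+2)/d$ and $p_1=2(d+3)/(d+1)$ are excluded in both Proposition~\ref{wavefrloc} and~\cite{rose}; the paper handles this by interpolating within the open ranges (complex interpolation), which still covers the stated open regions.
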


\noi{\it Remark.}  The statements (i) and (iii) also hold for $0<a<1$.

\begin{proof}
The stated  results for $p\le q\le \infty$  are in~\cite{rose}.
Consider the inequality
\begin{equation}\label{LpLqschrdyad}
\sup_{k\in \bbN}2^{-k a( \frac d2-\frac dp- \frac 1q)}
\big\|U^a [P_k f]\big\|_{L^p(L^q(I))} \lc
\|f\|_{L^p}
\end{equation}
which holds for $2(d+3)/(d+1)<p\le q\le \infty$, $d\ge 2$  and
$4<p\le q\le \infty$, $d=1$, by~\cite{rose}. It holds for $q=2$ if $p=\infty$, $d=1$ and $2+4/d<p\le \infty$ if $d\ge 2$, by Theorem~\ref{wavey}.
By complex interpolation
\eqref {LpLqschrdyad} also  holds for
$d\ge 2$,  $\frac{2(d+3)}{d+1}<p\le \frac{2(d+2)}d$ and
$\frac 1q<\frac{d+1}{2}-\frac{d+2}p$ which is equivalent with
$\frac{2p}{(d+1)p-2(d+2)}<q\le\infty$. Moreover for $d=1$ complex interpolation shows that \eqref {LpLqschrdyad}   holds for $\frac 2q<1-\frac 2p$ ({\it i.e.}
$\frac{2p}{p-2}<q\le \infty$). Finally we may combine the dyadic pieces by
using Proposition~\ref{combin} in the appendix.
\end{proof}

\medskip

\noi {\it Remark.} For $a=2$, we obtain further
improvements in~\cite{lerose2}, in particular in two dimensions
 an $L^p(L^4)$ bound for $p>16/5$.

\appendix

\section{Combining  frequency localized  pieces} \label{localtoglobal}
We state a  variant of results by Fefferman and Stein~\cite{fest} and
Miyachi~\cite{mi}  which is motivated by its application to prove Theorems
\ref{wavey} and~\ref{Schrthm}. The approach extends and somewhat simplifies the one in~\cite{rose}  (see also~\cite{se},~\cite{prrose}
for related results).
For later applications in \cite{lerose2} we formulate the results in
slightly more generality than needed in this paper
(in particular here we just need the case $p=\wp$ in
Theorem \ref{dyadicpieces}.

Let $\sB$ be a Banach space with norm $|\,\cdot\,|_\sB$; in our application
$\sB=L^q(I)$ for a compact interval $I$.
 We consider convolution operators $T_k$, with $k\in\N$,  mapping $L^1(\bbR^d)$ into $L^1(\bbR^d,\sB)$, a space of $\sB$-valued functions. We define $T_k$ by
$$T_k f(x)=h_k*f(x)=\int h_k(x-y) f(y)\,dy,$$
 where
for each $k$
we make, for simplicity,   the {\it a-priori} assumption that
$h_k\in L^1(\R^d,\sB)$  but we do not assume a bound
on these $L^1$
norms.  We shall be interested in situations where, for some $a>0$, the
 part of the kernel $h_k$ supported in $|x|\ge C_1 2^{k(a-1)}$ can be  neglected.
In particular, this is true of $U_t^a$ as defined in \eqref{Ua}.

In what follows  let  $\rho_k\in C^1(\bbR^d)$ be such that
\Be \label{rhok}
\begin{gathered}  |\rho_k(x)|+2^{-k}|\nabla \rho_k(x)|\le 2^{kd},
\\
\supp \rho_k \,\subset \, \{x:|x|\le 2^{-k}\}.
\end{gathered}
\Ee
We define $R_k$ on $\sB$-valued functions $g$ by
\Be \label{Rkdef}
 R_k g(x)= \rho_k* g(x)=\int\rho_k(y) g(x-y)\,dy. \Ee
In applications the   operators $R_k$ often arise from
  dyadic frequency decompositions, however no cancellation
condition on $\rho_k$ is needed in the following result.

\begin{theorem}  \label{dyadicpieces}
Let $\wp_0\in(1,\infty)$, $\wp_0\le p_0$, $1/\wp_0-1/p_0=1/\wp_1$
 and $a\in(0,\infty)$. With $T_k$ and $R_k$  defined as above, let
\Be\label{Ap0} A:=\sup_{k>0} 2^{kad/p_0}\|T_k\|_{L^{\wp_0}\to L^{p_0}(\sB)}
\Ee
and, for $1/\wp_1+1/\wp_1'=1$,
\Be \label{smallness}
B:=
\sup_{k>0} 2^{kad/p_0}    \Big(\int_{|x|\ge C_1 2^{k(a-1) }}|h_k(x)|_{\sB}^{\wp_1} \,dx\Big)^{1/\wp_1'}
\Ee
for some fixed constant $C_1\ge 1$.
Then for all $p\in (p_0,\infty)$ and  $r>0$,   there exists $C=C(p_0,p,r,C_0,C_1,d)$
 so that
\begin{equation}\label{trlizest}
\Big\|\,\Big( \sum_{k>0} 2^{kad r/p}
|R_k{T}_{k} f_{k}|_{\sB}^r\Big)^{1/r}
 \,\Big\|_{p} \le C
A \Big(1+\frac{B}{A}\Big)^{1-p_0/p}
\Big(\sum_{k>0} \|
f_{k}\|_{\wp}^p\Big)^{1/p}\,, \quad
\frac 1{\wp}-\frac 1{p} =\frac 1{\wp_0}-\frac 1{p_0}.
\end{equation}
Moreover,
\begin{equation}\label{bmoest}
\Big\|\,\Big( \sum_{k>0}
|R_k{T}_{k} f_{k}|_{\sB}^r\Big)^{1/r}
 \,\Big\|_{BMO} \le C
\big(A+B\big)
\sup_{k>0} \|
f_{k}\|_{\wp_1}\, .
\end{equation}
\end{theorem}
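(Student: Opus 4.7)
The plan is to follow the classical Fefferman--Stein/Miyachi strategy: I would establish two endpoint estimates, namely an $L^{p_0}(\ell^r(\sB))$ bound with constant $\lesssim A$ and the $BMO$ bound \eqref{bmoest} with constant $\lesssim A+B$, and then interpolate between them (using $p_0<p<\infty$) to obtain \eqref{trlizest} with the claimed hybrid constant $A^{p_0/p}(A+B)^{1-p_0/p}=A(1+B/A)^{1-p_0/p}$. After rescaling I may assume $A=1$ and track only the dependence on $B$.

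For the $L^{p_0}$ endpoint at $r=p_0$ the proof is a one-line Fubini computation: since $\|\rho_k\|_1\lesssim 1$ by \eqref{rhok}, $R_k$ is uniformly bounded on $L^{p_0}(\sB)$, and hypothesis \eqref{Ap0} yields $\|R_kT_kf_k\|_{L^{p_0}(\sB)}\lesssim A\,2^{-kad/p_0}\|f_k\|_{\wp_0}$; raising to the $p_0$-th power and summing in $k$ produces \eqref{trlizest} at $(p,r)=(p_0,p_0)$ with $\wp=\wp_0$. For $r\ge p_0$ the pointwise embedding $\ell^{p_0}\hookrightarrow\ell^r$ transfers the bound, and the case $r<p_0$ can be recovered at the end from the final $L^p$ bound (with $p>p_0$) via standard vector-valued inequalities.

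For the $BMO$ endpoint I would fix a ball $Q$ of radius $\delta$ about $x_0$, select a threshold $k_\delta$ with $C_1\,2^{k_\delta(a-1)}\simeq\delta$, and partition $\bbN=\cI_{\mathrm{near}}\cup\cI_{\mathrm{far}}$ according to whether $C_1\,2^{k(a-1)}\le\delta$ or not. Each kernel would be split $h_k=h_k^{\mathrm{in}}+h_k^{\mathrm{out}}$ at $|y|=C_1\,2^{k(a-1)}$, with \eqref{smallness} controlling $h_k^{\mathrm{out}}$. For $k\in\cI_{\mathrm{near}}$ the kernel $\rho_k*h_k^{\mathrm{in}}$ is supported in $\{|y|\le\delta\}$, so $R_kT_kf_k|_Q$ only depends on $f_k\chi_{2Q}$; applying the $L^{p_0}$ endpoint locally together with $\|f_k\chi_{2Q}\|_{\wp_0}\le|2Q|^{1/\wp_0-1/\wp_1}\|f_k\|_{\wp_1}$ by H\"older, and summing the resulting geometric series in $k$ (convergent precisely because of the defining relation $1/\wp_0-1/p_0=1/\wp_1$), gives a $Q$-average contribution $\lesssim A\sup_k\|f_k\|_{\wp_1}$ to $F:=\|(R_\cdot T_\cdot f_\cdot)\|_{\ell^r(\sB)}$. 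For the remaining contribution I would choose the constant vector $c_Q=(R_kT_kf_k(x_0))_k\in\ell^r(\sB)$ and estimate the pointwise oscillation of $R_kT_kf_k$ on $Q$ via $\|\nabla\rho_k\|_\infty\lesssim 2^{k(d+1)}$ combined with the $L^{\wp_1}(\sB)$-tail control on $h_k$ from $B$, giving the remaining $B\sup_k\|f_k\|_{\wp_1}$ contribution to the $BMO$ seminorm.

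The hard part will be the oscillation estimate in the far regime: writing $K_k=\rho_k*h_k$, one has $R_kT_kf_k(x)-R_kT_kf_k(x_0)=\int[K_k(x-v)-K_k(x_0-v)]f_k(v)\,dv$, bounds the translation difference of $K_k$ using $|x-x_0|\|\nabla\rho_k\|_\infty$, and then couples via H\"older the $L^{\wp_1}(\sB)$-norm of $h_k$ (as in \eqref{smallness}, with the slightly unusual exponent $1/\wp_1'$) against $f_k\in L^{\wp_1}$; the geometric sums in $k$ across $k_\delta$ close thanks again to the relation $1/\wp_0-1/p_0=1/\wp_1$. Once both endpoints are in place, interpolation between $L^{p_0}(\ell^r(\sB))$ and $BMO(\ell^r(\sB))$, applied via the Fefferman--Stein sharp maximal function to the scalar majorant $F$, yields \eqref{trlizest} for all $p\in(p_0,\infty)$ and $r>0$ with the claimed interpolated constant.
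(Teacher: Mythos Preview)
Your overall strategy---control the sharp maximal function of $F=\big(\sum_k 2^{kadr/p}|R_kT_kf_k|_\sB^r\big)^{1/r}$ and interpolate between an $L^{p_0}$ bound and a $BMO$ bound---is the Fefferman--Stein/Miyachi approach the paper also takes. But the execution has a genuine gap in the ``far'' regime.

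The oscillation argument you propose there, based on $\|\nabla\rho_k\|_\infty\lesssim 2^{k(d+1)}$, produces a factor $\delta\cdot 2^{k}$ in front of an average of $|T_kf_k|_\sB$. This is useful only when $\delta 2^k\le 1$, i.e.\ when the cube is smaller than the $\rho_k$--scale $2^{-k}$. Since $2^{-k}\le 2^{k(a-1)}$ for all $k>0$ and $a>0$, your ``far'' regime $C_12^{k(a-1)}>\delta$ contains two very different sub-regimes: the one with $2^{-k}\ge\delta$ (where your oscillation argument works and matches the paper's piece $\fS_1$), and the one with $2^{-k}<\delta<2^{k(a-1)}$. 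In the second sub-regime $\delta 2^k>1$, the gradient bound gives no gain, and the resulting sum over $k$ diverges; neither the tail hypothesis \eqref{smallness} on $h_k$ nor H\"older against $f_k\in L^{\wp_1}$ rescues this, because the \emph{inner} part $h_k^{\mathrm{in}}$ (supported where $|y|\le C_12^{k(a-1)}$, which now exceeds $\delta$) still contributes and is only controlled through the $L^{\wp_0}\!\to\! L^{p_0}$ bound. The paper isolates this piece as $\fS_3$ and treats it \emph{without} oscillation: one averages $|R_kT_kf_k|_\sB$ over $Q$, localizes $f_k$ to a cube of scale $2^{k(a-1)}$, and uses \eqref{Ap0} for the near part and \eqref{smallness} for the far part; because the weight $2^{kad/p}$ is $p$--dependent, the $L^p$ bound for $\fS_3$ then requires an analytic family argument (the family $\cG^z_{L,n}$ in the paper) interpolating between $\Re z=0$ ($p=p_0$) and $\Re z=1$ ($p=\infty$). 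Your two-way split and the single oscillation mechanism miss exactly this.

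Two smaller points. First, your $L^{p_0}$ endpoint is only established for $r\ge p_0$; the remark that $r<p_0$ ``can be recovered at the end via standard vector-valued inequalities'' does not work, since $\ell^r\supset\ell^{p_0}$ goes the wrong way. The paper simply reduces to $r\le 1$ (where $\bigl||u|^r-|v|^r\bigr|\le|u-v|^r$) and bounds the sharp function directly in $L^{p/r}$, never needing a genuine $L^{p_0}(\ell^r)$ endpoint. Second, because the weights $2^{kad/p}$ vary with $p$, ``interpolating between the $L^{p_0}$ operator and the $BMO$ operator'' is interpolation between \emph{different} operators; this is why the paper embeds the interpolation inside the sharp-function bound (for $\fS_2$ as a real interpolation, for $\fS_3$ as an analytic family) rather than interpolating $F$ itself.
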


Before we begin with the proof we state a preliminary lemma.

\begin{lemma}\label{pversp0}
Define
$$\cT_k f(x)= 2^{kd}\int_{|x-y|\le 2^{-k}}|T_k f(y)|_\sB dy$$
and,  with the notation as in \eqref{Ap0} and \eqref{smallness}, let
 \Be \label{asp}
\cA(p)=
C_1^{d(1/p_0-1/p)} A + A^{{p_0}/{p}}
B^{1-{p_0}/{p}} \,.
\Ee
Then, for $p_0\le p\le \infty$,
$$
\|\cT_k f\|_{L^p(\sB)} \lc 2^{-kad/p}
\cA(p)
\|f\|_\wp\, \quad
\frac 1{\wp}-\frac 1{p} =\frac 1{\wp_0}-\frac 1{p_0}\,.
$$
\end{lemma}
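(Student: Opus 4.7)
\emph{Strategy.} I would prove the lemma by interpolating the sublinear operator $\cT_k$ between the two endpoints $p=p_0$ and $p=\infty$; the precise form of $\cA(p)$ will then emerge from the elementary subadditivity $(u+v)^\alpha\le u^\alpha+v^\alpha$, valid for $\alpha\in(0,1)$.

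\emph{The endpoint $p=p_0$} is essentially free: since $\cT_k f(x)$ is, up to a dimensional constant, the mean of $|T_k f|_\sB$ over $B(x,2^{-k})$, it is pointwise dominated by $M(|T_k f|_\sB)(x)$. The Hardy--Littlewood maximal inequality (using $p_0>1$) together with \eqref{Ap0} immediately gives $\|\cT_k f\|_{p_0}\lc A\,2^{-kad/p_0}\|f\|_{\wp_0}$.

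\emph{The endpoint $p=\infty$ is the heart of the argument.} Fix $x$ and set $R_k:=C_1 2^{k(a-1)}+2^{-k}$; since $a,k>0$ one has $R_k\lc (1+C_1)\,2^{k(a-1)}$. Split $f=f_1+f_2$ with $f_1=\chi_{B(x,R_k)}f$. Whenever $y\in B(x,2^{-k})$ and $z\in\supp f_2$ one has $|y-z|\ge C_1 2^{k(a-1)}$, so H\"older on the convolution defining $T_kf_2(y)$ combined with the smallness hypothesis \eqref{smallness} gives the pointwise estimate $|T_kf_2(y)|_\sB\lc 2^{-kad/p_0}B\|f\|_{\wp_1}$, whence the $f_2$-contribution to $\cT_k f(x)$ is $\lc B\|f\|_{\wp_1}$. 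For $f_1$, H\"older in $y$ over $B(x,2^{-k})$ at exponent $p_0$ and \eqref{Ap0} give
\[
\cT_k f_1(x)\lc 2^{kd/p_0}\|T_kf_1\|_{L^{p_0}(\sB)}\lc A\,2^{kd(1-a)/p_0}\|f_1\|_{\wp_0}.
\]
The hypothesis $1/\wp_0-1/p_0=1/\wp_1$ is exactly what is needed for the further H\"older step
\[
\|f_1\|_{\wp_0}\le\|\chi_{B(x,R_k)}\|_{p_0}\|f\|_{\wp_1}\lc C_1^{d/p_0}\,2^{kd(a-1)/p_0}\|f\|_{\wp_1},
\]
and the resulting powers of $2^k$ cancel exactly, yielding the $f_1$-contribution $\lc C_1^{d/p_0}A\|f\|_{\wp_1}$. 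Summing, $\|\cT_k f\|_\infty\lc (C_1^{d/p_0}A+B)\|f\|_{\wp_1}$.

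\emph{Interpolation and conclusion.} With $\theta=p_0/p$, Marcinkiewicz interpolation applied to the sublinear operator $\cT_k$ between strong-type bounds at $(\wp_0,p_0)$ and $(\wp_1,\infty)$ yields
\[
\|\cT_k f\|_p\lc 2^{-kad/p}\,A^{p_0/p}\bigl(C_1^{d/p_0}A+B\bigr)^{1-p_0/p}\|f\|_\wp,
\]
and the subadditivity $(u+v)^{1-p_0/p}\le u^{1-p_0/p}+v^{1-p_0/p}$ converts the right-hand side into $2^{-kad/p}\,\cA(p)\|f\|_\wp$, as required. The main obstacle is the exponent bookkeeping at the $L^\infty$ endpoint: the powers of $2^k$ produced by the cutoff radius, the H\"older step, the averaging and the normalization in $A$ must all telescope --- which they do precisely because the normalization scale $2^{kad/p_0}$ in \eqref{Ap0} is matched to the spatial scale $2^{k(a-1)}$ of the effective support of the kernel $h_k$.
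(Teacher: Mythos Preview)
Your proof is correct and follows essentially the same route as the paper: interpolate between $p=p_0$ (where the averaging is harmless) and $p=\infty$ (where one splits $f$ into a local piece handled via \eqref{Ap0} and H\"older, and a far piece handled via \eqref{smallness}). The only cosmetic difference is that the paper sets up the $L^\infty$ endpoint with a fixed grid of cubes of sidelength $2^{k(a-1)}$ and their dilates $\cQ^*$, whereas you simply fix $x$ and cut at the ball $B(x,R_k)$; your version is slightly more direct and avoids the grid bookkeeping, while the paper's version meshes with the cube decompositions used later in the proof of Theorem~\ref{dyadicpieces}.
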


\begin{proof}
We interpolate between $p=p_0$ and $p=\infty$.
Since $\cA(p_0)=A$ the inequality is immediate for $p=p_0$
from assumption \eqref{Ap0}.

To prove the inequality for $p=\infty$ we choose a grid $\fQ^k_a$ of
cubes $\mathcal{Q}$  of
sidelength $2^{k(a-1)}$,
so that the cubes in $\fQ^k_a$ have  disjoint interior
and  $\sum_{\mathcal{Q}\in \fQ^k_a}\chi_\mathcal{Q}=1$ almost everywhere. For  each $\cQ\in \fQ^k_a$  let
$\mathcal{Q}^*$ be the cube with
same center as $\mathcal{Q}$ and sidelength $10 d  C_1 2^{k(a-1)}$.
It then  suffices to show that for each
cube $\cQ\in \fQ^k_a$
\Be \label{Linftybd}
\chi_\cQ(x)| \cT_k f(x)|_\sB \lc \big(C_1^{d/p_0} A+B\big) \, \|f\|_\infty
\Ee
for every $x$.
Given $\mathcal{Q}$ we split $f= b_\cQ+g_\cQ$ where $b_\cQ=f\chi_{\cQ^*}$ and $g_\cQ= f\chi_{\bbR^d\setminus \cQ^*}$.
For $b_\cQ$ we apply H\"older's inequality and use assumption \eqref{Ap0}, so that
\begin{align*}
|\cT_k b_\cQ(x)|_\sB &\lc 2^{kd/p_0} \big\||T_k b_\cQ|_\sB\big\|_{p_0} \lc
2^{kd/p_0} A 2^{-kad/p_0}
\|b_\cQ\|_{\wp_0}
\\&\lc  A  2^{kd/p_0} 2^{-kad/p_0} |\cQ^*|^{1/\wp_0-1/\wp_1} \|f\|_{\wp_1}
\,\lc\,C(d)  C_1^{d/\wp_0-d/\wp_1}A\,\|f\|_{\wp_1}
\end{align*}
since $|\cQ^*| \approx C_1^d 2^{k(a-1)d}$.

For $g_\cQ$ we note that when $x\in \cQ$, $w\notin \cQ^*$ and $|x-z|\le 2^{-k}$, then $|z-w|\ge C_12^{k(a-1)}$, so we can  use \eqref{smallness} to estimate
\begin{align*}
|\cT_k g_\cQ(x)|_\sB &\le 2^{kd}\int_{|x-z|\le 2^{-k}}\,
\int_{|z-w|\ge C_12^{k(a-1)}} |h_k(z-w)|_{\sB}|f(w)|\, dw \, dz
\\&\lc B\,\|f\|_{\wp_1}
\end{align*}
Combining the two estimates, we  get \eqref{Linftybd}.
\end{proof}

\begin{proof}[Proof of Theorem~\ref{dyadicpieces}]
We may assume $r\le 1$ and that the summation in  $k$ is extended over a finite set. We proceed as in~\cite{prrose} and,
by the Fefferman-Stein theorem~\cite{fest}  on the $\#$-maximal operator and the inequality $|\,|u|_{\sB}^r-|v|_{\cB}^r|\le |u-v|_{\cB}^r$
we get
\begin{align*}
&\Big\|\Big(\sum_k |2^{kad/p} R_kT_kf_k|^r_{\sB} \Big)^{1/r}\Big\|_{p}
\\&\lc\Big\|\sup_{Q:x\in Q} \sum_k 2^{kad r/p}
\intslash_Q \intslash_Q
| R_kT_kf_k(y)-R_kT_kf_k(z)|_\sB^r \, dz \,dy
\Big\|_{L^{p/r}(dx)}^{1/r}\,.
\end{align*}
Let $x\mapsto Q(x)$ depend measurably on $x$, so that for each $x$
the cube $Q(x)$ is centered at $x$ and has sidelength   in
$[2^{L(x)}, 2^{L(x)+1})$.
It suffices to estimate the $L^p$ norm of
$$\Big( \sum_k 2^{kad r/p}
\intslash_{Q(x)} \intslash_{Q(x)}
| R_kT_kf_k(y)-R_kT_kf_k(z)|_\sB^r \, dz \,dy\Big)^{1/r}\,.$$
We let  $F=\{f_k\}_{k>0}$
and estimate the displayed expression by
$\sum_{i=1}^3\fS_i F(x)$
 where
\begin{align*}
\fS_1F(x)&=\Big(\sum_{k+L(x)\le 0} 2^{kad r/p}
\intslash_{Q(x)} \intslash_{Q(x)}
| R_kT_kf_k(y)-R_kT_kf_k(z)|_\sB^r \, dz \,dy
\Big)^{1/r}\,,
\\
\fS_2F(x)&=\Big(\sum_{\substack{ k+L(x)> 0\\ k(a-1)\le L(x)}}
 2^{kad r/p}
\intslash_{Q(x)}
| R_kT_kf_k(y)|_\sB^r \,dy
\Big)^{1/r}\,,
\\
\fS_3F(x)&=\Big(\sum_{\substack{ k+L(x)> 0\\ k(a-1)> L(x)}}
 2^{kad r/p}
\intslash_{Q(x)}
| R_kT_kf_k(y)|_\sB^r \,dy
\Big)^{1/r}\,.
\end{align*}
Set $\|F\|_{\ell^p(L^\wp)}=
(\sum_k\|f_k\|_\wp^p)^{1/p}$  for $p<\infty$
and
$\|F\|_{\ell^\infty(L^{\wp_1})}= \sup_k \|f_k\|_{\wp_1}$.
For $p>p_0$  we will
bound  the $L^p$ norms of
$\fS_i F$ by $C\cA(p)\|F\|_{\ell^p(L^\wp)}$.
In the proofs we shall use the notation
$$\om_k(x)= 2^{kd} \chi_{\{|x|\le \sqrt d 2^{-k+3}\}}(x).$$


\subsubsection*{{\it $L^p$ bound for $\fS_1(F)$}}
Using the estimate \eqref{rhok} for   $\nabla \rho_k$ we see that
for $y,z\in Q(x)$
$$| R_kT_kf_k(y)-R_kT_kf_k(z)|_\sB \, \lc 2^{L(x)+k}
\int\om_k(x-u)
|T_kf_k(u)|_\sB du\,.
$$
Using the embedding $\ell^{p/r}\hookrightarrow \ell^\infty$ we estimate
\begin{align*}
\fS_1F(x)^r&\lc \sup_L \Big[ \sum_{k+L\le 0} 2^{k+L}
\int\om_k(x-u)  |2^{kad/p} T_kf_k(u)|_\sB du
\Big]^r
\\
&\lc \Big(\sum_L \Big[ \sum_{0<k\le -L} 2^{k+L}
\int\om_k(x-u) |2^{kad/p} T_kf_k(u)|_\sB du
\Big]^p\Big)^{r/p}
\end{align*} and
therefore, with the change of summation variable $n=-L-k$ and Minkowski's inequality
\begin{align*}
&\|\fS_1F\|_p= \big\|[\fS_1 F ]^r\big\|_{p/r}^{1/r}
\\&\lc
\sum_{n>0} 2^{-n}
\Big(\sum_{L> -n}
\Big\| {2^{-(L+n)d}}\int_{|y|\le C 2^{L+n}}
2^{-(L+n)ad/p} |T_{-L-n} f_{-L-n}(\,\cdot\,-y)|_\sB \, dy\Big\|_p^p \Big)^{1/p}.
\end{align*}
By Lemma
\ref{pversp0}
this can be estimated  by
$$\cA(p) \sum_{n>0} 2^{-n} \Big(\sum_{L>-n}\|f_{-L-n}\|_\wp^p\Big)^{1/p} \lc \cA(p)\|F\|_{\ell^p(L^\wp)}.
$$

\subsubsection*{{\it $L^p$ bound for $\fS_2(F)$}}
The $L^p$ bound for $\fS_2F$ follows by interpolating the inequalities
\Be \label{psest}
\begin{aligned}
\|\fS_2F\|_{p_0}&\lc A\|F\|_{\ell^{p_0}(L^{\wp_0})}\,,
\\
\|\fS_2F\|_{\infty}&\lc (C_1^{d/p_0}A+B)\|F\|_{\ell^{\infty}(L^{\wp_1})}\,.
\end{aligned}
\Ee
For the
 $L^{p_0}$ bound, we sum a geometric series, using $p>p_0$, to estimate
$$\Big(\intslash_{Q(x)} \sum_{\substack{k+L(x)> 0\\ k(a-1)\le L(x)}}
2^{kad r/p}|R_kT_k f_k(y)|_\sB^r dy\Big)^{1/r}
\lc \big(M \big[\sup_{k>0} 2^{kad r/p_0}|R_kT_k f_k|_\sB^r\big](x)\big)^{1/r}$$
and by the $L^{p_0/r}$ boundedness of the Hardy--Littlewood operator we get
\begin{align*}\|\fS_2 F\|_{p_0} &\lc \big\|\sup_{k>0}
2^{kad r/p_0}|R_kT_k f_k|_\sB^r\big\|_{p_0/r}^{1/r}=
\big\|\sup_{k>0}
2^{kad /p_0}|R_kT_k f_k|_\sB\big\|_{p_0}
\\&\lc
\Big(\sum_{k>0} \big\|2^{kad /p_0}|R_kT_k f_k|_\sB\big\|_{p_0}^{p_0}\Big)^{1/p_0}
\lc
\Big(\sum_{k>0} \big\|2^{kad /p_0}|T_k f_k|_\sB\big\|_{p_0}^{p_0}\Big)^{1/p_0}
\\&\lc A
\|F\|_{\ell^{p_0}(L^{\wp_0})}\,,
\end{align*}

For the $L^\infty$ bound,   we fix $x$, $Q=Q(x)$, $L=L(x)$ and
let $y_Q=y_{Q(x)}$ be the center of $Q$.
By H\"older's inequality and \eqref{rhok},
$$
\fS_2 F(x)\lc \Big(
\sum_{\substack{k+L>0\\k(a-1)\le L}}
2^{kad r/p} \Big[\intslash_{Q}\int \om_k(y-z)
\big|T_kf_k](z)\big|_{\sB} \, dz\, dy \Big]^r \Big)^{1/r}.
$$
Let $Q^*$ be the  $C_1 2^{10}d$ dilate of $Q$ with respect to $y_Q$.
We may estimate the last displayed expression by
$\Enear+\Efar$ where
\begin{align*}
\Enear &=\Big(
\sum_{\substack{k+L>0\\k(a-1)\le L}}
2^{kad r/p} \Big[\intslash_{Q}\int\om_k(y-z)
\big|T_k[f_k\chi_{Q^{*}}](z)\big|_{\sB} \,dz\,dy \, \Big]^r \Big)^{1/r}\, ,
\\
\Efar&
=\Big(\sum_{\substack{k+L>0\\k(a-1)\le L}}
2^{kad r/p} \Big[\intslash_{Q}\int\om_k(y-z)
\big|T_k[f_k\chi_{\bbR^d\setminus Q^{*}}](z)\big|_{\sB} \, dz\, dy
\, \Big]^r \Big)^{1/r}\, ,
\end{align*}
and it suffices to check that
\begin{align}
\label{insideQ}
\Enear \, &\lc \,   C_1^{d/p_0}
 A
\|F\|_{\ell^\infty(L^{\wp_1})}\, ,
\\
\label{outsideQ}
\Efar \, &\lc \,
B \|F\|_{\ell^\infty(L^{\wp_1})}\, .
\end{align}


To prove \eqref{insideQ}
 we apply H\"older's inequality, use $p_0<p$ and assumption \eqref{Ap0}:
\begin{align*}
\Enear &\lc  \Big(\sum_k 2^{kad r/p}   \Big( \frac{C_1^d}{|Q^*|}\int
\big|T_{k} [f_{k}\chi_{Q^*}](z)\big|_\sB^{p_0}
dz\Big)^{r/p_0} \Big)^{1/r}
\\
&\lc C_1^{d/p_0} \sup_k 2^{kad/p_0} |Q^*|^{-1/p_0}
\big\|{T}_{k} [f_{k}\chi_{Q^*}]\big\|_{L^{p_0}(\sB)}
\\
&\lc AC_1^{d/p_0}
\sup_k |Q^*|^{-1/p_0}
\big\|f_{k}\chi_{Q^*}\big\|_{\wp_0}
\lc AC_1^{d/p_0}
\sup_k |Q^*|^{-1/p_0} |Q^*|^{1/\wp_0-1/\wp_1}
\big\|f_{k}\chi_{Q^*}\big\|_{\wp_1}
\\&
\lc AC_1^{d/p_0} \|F\|_{\ell^\infty(L^{\wp_1})}\,.
\end{align*}

To prove   \eqref{outsideQ} we use  assumption \eqref{smallness}.
Note that if $y\in Q$, $|y-z|\le \sqrt{d}2^{-k+3}$
(and $k(a-1)\le L$),
 $w\in  \bbR^d\setminus Q^*$, then $|z-w|\ge  C_1 2^{k(a -1)}$.
Thus
\begin{align*}&\intslash_{Q} \int \om_k(y-z)
\big|T_k[f_k\chi_{\bbR^d\setminus Q^{*}}](z)\big|_{\sB} \, dz\, dy
\\&\lc
\intslash_{Q}\int \om_k(y-z)
\int_{|z-w|\ge C_1 2^{k (a-1)}} |h_k(z-w)|_\sB |f_k(w)| dw
\, dz\, dy
\\
&\lc 2^{-kad /p_0} B\|F\|_{\ell^\infty(L^{\wp_1})}
\end{align*}
and
\eqref{outsideQ} follows.

\subsubsection*{{\it $L^p$ bound for $\fS_3(F)$}}

Let $B_L$ be the ball of radius $10 d 2^L$  centered at the origin.
We may estimate
\begin{align*}
\big\| \fS_3(F)\big\|_p&\lc
\Big\|\sup_L  \frac{\chi_{B_L}}{|B_L|} *\Big[ \sum_{\substack{k+L>0\\k(a-1)>L}}
2^{kad r/p} |R_k{T}_{k} f_{k}|_{\sB}^r\Big]\Big\|_{p/r}^{1/r}
\\
&\lc \Big(\sum_{n>0} \Big\|\sup_{L<(1-a^{-1})n}
\frac{\chi_{B_L}}{|B_L|} *\big[
2^{(n-L)ad r/p} |R_{n-L}{T}_{n-L} f_{n-L}|_{\sB}^r\big]\Big\|_{p/r}\Big)^{1/r}
\end{align*}
by  Minkowski's inequality.
By H\"older's inequality on each ball $B_L$ we see that
 the last expression is dominated by
$$
\Big(\sum_{n>0}\Big\|
\sup_{L<(1-a^{-1})n}
\frac{\chi_{B_L}}{|B_L|} *
2^{(n-L)ad /p} |R_{n-L}{T}_{n-L} f_{n-L}|_{\sB}
\Big\|_p^r\Big)^{1/r}.
$$
Now
for $n>0$ we have
$\chi_{B_L}*\om_{n-L}(x)\lc \chi_{B_{L+1}}(x)$. Thus   we get
$$\big\| \fS_3(F)\big\|_p\lc
\Big(\sum_{n>0} \|\fS_{3,n} F\|_p^r\Big)^{1/r}$$
where
$$\fS_{3,n} F(x) =
\sup_{L<(1-a^{-1})n}
2^{-Ld}\chi_{B_{L+1}} *
2^{(n-L)ad /p} |T_{n-L} f_{n-L}|_{\sB}.
$$
It thus suffices to prove
\Be \label{fSn3est}
\|\fS_{3,n} F\|_p \lc 2^{-n d(\frac{1}{p_0}-\frac 1p)} \cA( p)
\|F\|_{\ell^p(L^{\wp})}
\,.
\Ee

We shall use an analytic interpolation argument and for this it  is
necessary to linearize the operator.
For any bounded linear functional $\la\in \sB^*$ we denote by
$\inn{v}{\la}$ the action of  $\la$ on  $v\in \sB$.
Let $(x,y)\to u_L(x,y)$ be any measurable function with values in $\sB^*$,
so that $ \|u_L\|_\infty\le 1$. After replacing a $\sup$ in $L$ by an $\ell^p$
norm and interchanging an integral and a summation
it then suffices to bound
$$\Big(\sum_{L<(1-a^{-1})n}\Big\|
 2^{(n-L)ad /p}
2^{-Ld}\int \chi_{B_{L+1}}(y)
 \biginn{T_{n-L} f_{n-L}(\,\cdot\, -y)}{u_L(\,\cdot\,,y)} dy
\Big\|_p^p\Big)^{1/p}
$$
by the right hand side of \eqref{fSn3est}, with a constant uniform in the
choices of the $u_L$.
In what follows we fix such a choice.

Define an analytic family
$$\cG_{L,n}^z F(x)=  2^{(n-L)ad (1-z)/p_0}
2^{-Ld}\int \chi_{B_{L+1}}(y)
 \biginn{T_{n-L} f_{n-L}(x-y)}{u_L(x,y)} dy.
$$
We then  show that for $p_0\le \widetilde{p}\le \infty$
\begin{multline}\label{Gest}
\Big(\sum_{L<(1-a^{-1})n}\big \|\cG_{L,n}^z
F\big\|_{\widetilde{p}}^{\widetilde{p}} \Big)^{1/\widetilde{p}}
 \lc 2^{-n d(\frac{1}{p_0}-\frac {1}{\widetilde{p}})}
\cA( \widetilde{p})\|F\|_{\ell^{\widetilde{p}}(L^{\widetilde \wp})}\,, \\
\quad
(1-\Re(z))(\frac 1{\wp_0},\frac 1p)+ \Re(z)
(\frac{1}{\wp_1},\frac 1\infty)=(\frac {1}{\widetilde{\wp}},\,
\frac {1}{\widetilde{p}})
\end{multline}
and the required $L^p$ estimate follows if we let $z= (1-p_0/p)$.
By Stein's theorem on analytic families of operators it suffices to show
\eqref{Gest} for $\Re(z)=0$, $\widetilde{p}=p_0$ and
$\Re(z)=1$, $\widetilde{p}=\infty$.

First, for $\widetilde{p}=p_0$, $z=\im \gamma$  we bound
\begin{align*}
&\Big(\sum_{L<(1-a^{-1})n}\big \|\cG_{L,n}^{\im \gamma} F
\big\|_{p_0}^{p_0} \Big)^{1/p_0}
\lc
\Big(\sum_{L<(1-a^{-1})n} 2^{(n-L)ad }
\big\|{T}_{n-L} f_{n-L}\big\|_{L^{p_0}(\sB)}^{p_0}\Big)^{1/p_0}
\\
&\lc A \Big(\sum_{L<n}\big\|f_{n-L}\big\|_{\wp_0}^{p_0}\Big)^{1/p_0}
\lc A\|F\|_{\ell^{p_0}(L^{\wp_0})}
\end{align*}
which is \eqref{Gest} for $\Re(z)=0$.

Now let $\widetilde{p}=\infty$, $\Re(z)=1$.
The required bound for $\cG_{L,n}^{1+\im \gamma}$ follows if we can show that
for any
 fixed $x_0$ and fixed $L<(1-a^{-1})n$
\Be \label{x0}
2^{-Ld} \chi_{B_{L+1}} *
\big| T_{n-L} f_{n-L}\big|_{\sB}(x_0)
\lc \,2^{-n d /p_0} (C_1^{d/p_0} A+B) \|F\|_{\ell^\infty(L^{\wp_1})}\,.
\Ee

Let $\cQ^*$ be a  cube of sidelength $20 d C_1 2^{(n-L)(a-1)}$
centered at $x_0$; recall the inequality   $(n-L)(a-1)>L$.
We dominate  the left hand side of \eqref{x0} by $C(\Ennear+\Enfar)$,
where
\begin{align*}
\Ennear &=
\int 2^{-Ld}\chi_{B_{L+1}}(x_0-z)
 \big| T_{n-L} [f_{n-L}\chi_{\cQ^*}](z)\big|_{\sB} \, dz \, ,
\\
\Enfar &=
\int 2^{-Ld}\chi_{B_{L+1}}(x_0-z)
 \big| T_{n-L} [f_{n-L}\chi_{\bbR^d\setminus\cQ^*}](z)\big|_{\sB} \, dz \, .
\end{align*}

By H\"older's inequality
\begin{align*}\Ennear&\lc
\Big(2^{-Ld}\int \big| T_{n-L} [f_{n-L}\chi_{ \cQ^*}](z)\big|_{\sB}^{p_0} \,
dz\Big)^{1/p_0}
\\
&\lc A 2^{-(n-L)ad/{p_0}}
2^{-Ld/p_0}\big\|f_{n-L}\chi_{ \cQ^*}\big\|_{\wp_0}
\end{align*}
and since $\|f_{n-L}\chi_{ \cQ^*}\|_{\wp_0} \lc
C_1^{d/p_0} 2^{(n-L)(a-1)d/p_0}\|f_{n-L}\|_{\wp_1} $
this yields
$$
\Ennear \lc C_1^{d/p_0}
A 2^{-nd/{p_0}}
\|F\|_{\ell^\infty(L^{\wp_1})}\,.
$$

Next observe that if $x_0-z\in B_{L+1}$ and $y\in \bbR^d\setminus \cQ^*$
then $|z-y|\ge C_1 2^{(n-L)(a-1)}$ and thus
\begin{align*}
\Enfar &\lc
\int
2^{-Ld}\chi\ci{B_{L+1}}(x_0-z)
\int_{|z-y|\ge C_1 2^{(n-L)(a-1)}}|h_{n-L}(z-y)|_\sB|f_{n-L}(y)|\,dy\, dz
\\
 &\lc 2^{-(n-L)ad/p_0}B\|f_{n-L}\|_{\wp_1}\,.
\end{align*}
Since by assumption  $aL<(a-1)n$ we get
$$\Enfar \lc
B 2^{-nd/{p_0}} \|F\|_{\ell^\infty(L^{\wp_1})}\,.$$
The estimates for $\Ennear$ and $\Enfar$ yield
\eqref{x0}. This finishes the estimation of $\fS_{3,n}(F)$ and thus concludes the proof of the theorem.

\end{proof}

\subsection*{\bf Applications}
We show how for the semigroups $U^a_t$ one can use Theorem
\ref{dyadicpieces} to prove global estimates from frequency
localized versions.

Let
 $\varphi\in C^\infty(\bbR)$ be supported in $(1/4,4)$ and not identically zero.
Define $\scr=s_{a\!,\text{cr}}$ by \Be\label{scr}\scr(\wp;p,q) :=
d\Big(\frac 1{\wp}-\frac 1p\Big)+a\Big(\frac d2-\frac d{\wp}-\frac
1q\Big) \Ee which, for a large range of parameters, turns out to be
a critical for $L^\wp_s\to L^p(\R^d;L^q(I))$ estimates; in
particular $\scr(p;p,q) = a(\frac d2-\frac d{p}-\frac 1q)$. Let
\Be\label{Gammadef} \Gamma_a(\wp;p,q) :=\sup_{R>1} R^{\scr(\wp;p,q)}
\big\|U^a \varphi \big(R^{-1}|D|\big)\big\|_{L^\wp\to
L^p(\R^d;L^q[-1/2,1/2])}\,.
\Ee
Clearly this definition depends on $\varphi$, however the finiteness of
$\Gamma_a(\wp;p,q)$ is independent of the particular $\varphi$ used.

\begin{proposition}\label{combin} Let $\wp_0,p_0,q_0\in[1,\infty]$,
$p\in(p_0,\infty)$, $q\ge  q_0$, $r\in (0,\infty)$, $\wp_0\le p_0$,  and let $I$ be a compact interval.
Suppose that $\Gamma_a(\wp_0;p_0,q_0)$
 is finite
and let $\scr$ be as in \eqref{scr}.
Assume that $1/\wp_0-1/p_0=1/\wp-1/p$.
Then:

(i)
\begin{equation}\label{dyadSchr}
\Big\|\, \Big(\sum_{k>0} \Big(\int_I |P_k U^a\!f(\,\cdot\,,t) |^q dt\Big)^{r/q}\Big)^{1/r}
\Big\|_{L^p(\mathbb{R}^d)} \lc
\Big(\sum_{k>0} 2^{ksp}\|P_k f\|_{\wp}^p\Big)^{1/p}\, ,\quad s=\scr(\wp;p,q),
\end{equation}
and
\Be\label{besovmixednormest}
\Big\|\Big(\int_I|U^af(\cdot,t)|^q\,dt\Big)^{1/q}\Big\|_{L^p(\bbR^d)}
\lc \|f\|_{B^\wp_{s,p}(\bbR^d)}, \quad s=\scr(\wp;p,q)\,.
\Ee

(ii) If $1/\wp_0-1/p_0=1/\wp_1$ then
\begin{equation}\label{dyadSchrbmo}
\Big\|\, \Big(\sum_{k>0} \Big(\int_I |P_k U^a\!f(\,\cdot\,,t) |^q dt\Big)^{r/q}\Big)^{1/r}
\Big\|_{BMO(\mathbb{R}^d)} \lc
\sup_{k>0} 2^{ks}\|P_k f\|_{\wp_1}\,,\quad s=\scr(\wp_1;\infty,q)\,.
\end{equation}

(iii) If $t\mapsto \varpi(t)$ is smooth and compactly supported
 then
\begin{equation}\label{besovschr}
\big\|\, \varpi\,U^a \!f \big\|_{L^p(\bbR^d;B^q_{\gamma,r}(\bbR))}
 \lc
\|f\|_{B^\wp_{s+a\gamma,p}(\bbR^d)}, \quad s= \scr(\wp;p,q)\,.
\end{equation}
If $f\in B^\wp_{s,p}(\bbR^d)$ with  $s= \scr(\wp;p,\infty)$
then the function $t\mapsto U^a f(x,t)$
is continuous   locally in $B^q_{1/q,r}(\bbR)$,
for almost every $x\in \bbR^d$,
 and we have the maximal inequality
\begin{equation}
\label{max}
\big\|\sup_{t\in I} |U^a \!f(\,\cdot\,,t)|  \big\|_{L^p(\bbR^d)}
 \lc
\|f\|_{B^\wp_{s,p}(\bbR^d)}, \quad s=
a(\tfrac d2-\tfrac d{\wp})+
d(\tfrac 1{\wp}-\tfrac 1p)\,.
\end{equation}

\end{proposition}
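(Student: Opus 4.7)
The plan is to reduce all three parts of Proposition~\ref{combin} to Theorem~\ref{dyadicpieces} from the appendix, taking $\sB = L^q(I)$ and convolution operators $T_k$ associated to $U^a$ localized to the dyadic frequency shell $\{|\xi|\approx 2^k\}$. Let $\tilde P_k$ denote a slightly enlarged Littlewood-Paley cutoff with $P_k\tilde P_k = P_k$. I would take $T_k := c_k U^a\tilde P_k$ as an $\sB$-valued convolution operator, with a $k$-dependent renormalization $c_k$ chosen so that $\|T_k\|_{L^{\wp_0}\to L^{p_0}(\sB)}\lesssim 2^{-kad/p_0}$ (matching the scaling built into hypothesis \eqref{Ap0} of the appendix theorem), and $R_k$ to be convolution with a smooth compactly supported kernel $\rho_k$ as in \eqref{rhok} that reproduces functions of Fourier support in $\{|\xi|\approx 2^k\}$, so that $R_k T_k(P_k f)$ coincides with $c_k P_k U^a f$ up to errors controlled by the kernel tails.

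Two input conditions need verification. The operator-norm bound is supplied by the hypothesis $\Gamma_a(\wp_0;p_0,q_0)<\infty$ after scaling; this is equivalent, on the frequency shell $\{|\xi|\approx 2^k\}$, to $\|U^a\tilde P_k\|_{L^{\wp_0}\to L^{p_0}(\sB)}\lesssim 2^{-k\scr(\wp_0;p_0,q_0)}$, and the normalization $c_k$ absorbs the mismatch between $\scr(\wp_0;p_0,q_0)$ and $ad/p_0$. The kernel tail condition \eqref{smallness} is a routine nonstationary phase estimate: the $\sB$-valued kernel
\[
h_k(x,t) = \int e^{\imath\langle x,\xi\rangle + \imath t|\xi|^a}\,\chi(2^{-k}|\xi|)\,d\xi
\]
has phase gradient $x + at|\xi|^{a-1}\hat\xi$ of magnitude $\gtrsim |x|$ for $|x|\ge C_1 2^{k(a-1)}$ with $C_1$ large, uniformly for $t\in I$ and $|\xi|\approx 2^k$, so repeated integration by parts yields arbitrary polynomial decay in $2^k|x|$ for $h_k$.

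With these inputs, Theorem~\ref{dyadicpieces} produces \eqref{dyadSchr}: after taking $f_k = 2^{ks}P_k f$, the factors $c_k$, the built-in weight $2^{kadr/p}$ of \eqref{trlizest}, and the $2^{ksr}$ from $f_k$ combine to yield exactly the critical scaling $s = \scr(\wp;p,q)$, using the hypothesis $1/\wp - 1/p = 1/\wp_0 - 1/p_0$ to close the exponent identity. The Besov estimate \eqref{besovmixednormest} follows from \eqref{dyadSchr} by Minkowski's inequality in $L^q(I)$ (valid since $r\le q$) together with the Littlewood-Paley characterization of $B^\wp_{s,p}$. Statement (ii) is proved the same way using \eqref{bmoest} in place of \eqref{trlizest}. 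For (iii), the key observation is that fractional $t$-derivatives of $P_k U^a f$ amount to factors $\approx 2^{ka\gamma}$, so \eqref{besovschr} follows from \eqref{dyadSchr} (applied with $q$ replaced by the parameter dictated by the target Besov space in $t$) after shifting $s\mapsto s + a\gamma$; then \eqref{max} follows by applying \eqref{besovschr} at $q=\infty$, $\gamma = 1/q$, combined with the embedding $B^q_{1/q,r}(I)\hookrightarrow L^\infty(I)$.

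The principal obstacle is the algebraic bookkeeping needed to verify that the exponent identity closes to produce the critical weight $s = \scr(\wp;p,q)$ on the right-hand side of \eqref{dyadSchr}; the interpolation hypothesis $1/\wp - 1/p = 1/\wp_0 - 1/p_0$ is exactly what is required. The stationary phase argument for \eqref{smallness} and the Littlewood-Paley reductions from $\ell^r$-sums to Besov norms are standard.
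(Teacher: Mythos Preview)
Your approach is essentially the same as the paper's: reduce to Theorem~\ref{dyadicpieces} with $\sB=L^q(I)$, verify hypothesis~\eqref{Ap0} from the assumption $\Gamma_a(\wp_0;p_0,q_0)<\infty$ and hypothesis~\eqref{smallness} by nonstationary phase on the kernel $h_k$, then read off \eqref{dyadSchr} and \eqref{dyadSchrbmo}; for part~(iii) the paper likewise makes precise your claim that ``$t$-derivatives on $P_kU^a f$ amount to factors $\approx 2^{ka\gamma}$'' by showing that $\cL_j[\varpi U^a P_k g]$ is negligible unless $|ka-j|\le M$, via an integration-by-parts estimate in the $s$-variable of $\int\varpi(s)e^{is(|\xi|^a-\tau)}\,ds$.

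Two small points of comparison. First, you omit the reduction from $q_0$ to $q$: the hypothesis gives $\Gamma_a(\wp_0;p_0,q_0)<\infty$, but the argument needs the $L^{\wp_0}\to L^{p_0}(L^q)$ bound; the paper invokes a Sobolev-type embedding (Lemma~\ref{bourg0}) to obtain $\Gamma_a(\wp_0;p_0,q)\lesssim\Gamma_a(\wp_0;p_0,q_0)$ for $q\ge q_0$. Second, for \eqref{besovmixednormest} your route is actually cleaner than the paper's: choosing $r\le 1$ in \eqref{dyadSchr} and using the pointwise inequality $(\int_I|\sum_k P_kU^af|^q)^{1/q}\le\sum_k(\int_I|P_kU^af|^q)^{1/q}\le(\sum_k[(\int_I|P_kU^af|^q)^{1/q}]^r)^{1/r}$ gives the Besov bound directly, whereas the paper goes through a vector-valued Littlewood--Paley inequality $\|G\|_{L^p(L^q)}\lesssim\|(\sum_k|P_kG|^2)^{1/2}\|_{L^p(L^q)}$. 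Your parenthetical ``valid since $r\le q$'' should read $r\le 1$, but the idea is right.
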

We note that the  constants implicit in \eqref{dyadSchr} and \eqref{besovschr}
depend on $p, q, q_0, I, a,d, \varpi$.

For the proof of Proposition
\ref{combin} we need  a standard imbedding result.
\begin{lemma}\label{bourg0}
Let $\wp, p\in [1,\infty]$ and $1\le q_0\le q\le\infty$. Then
$$\Gamma_a(\wp;p,q)\lc \Gamma_a(\wp;p,q_0).$$
\end{lemma}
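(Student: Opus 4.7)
The key idea is Bernstein's inequality applied in the time variable. For fixed $x$ and $R\ge 1$, the time-Fourier transform of $t\mapsto F(x,t):=U^a_t\varphi(R^{-1}|D|)f(x)$ is supported in $\{\tau=|\xi|^a:\,|\xi|\in R\cdot\supp\varphi\}\subset[(R/4)^a,(4R)^a]$, an interval of width $\lc R^a$. The standard Bernstein inequality (global in $t$) therefore yields $\|F(x,\cdot)\|_{L^q(\R)}\lc R^{a(1/q_0-1/q)}\|F(x,\cdot)\|_{L^{q_0}(\R)}$ for each $x$, and the exponent $a(1/q_0-1/q)$ matches exactly $\scr(\wp;p,q_0)-\scr(\wp;p,q)$, so that at the level of norms on all of $\R$ the desired inequality is immediate; the remaining task is to localize both norms to the compact interval $I=[-1/2,1/2]$.

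To accomplish this, I would fix $\tilde\beta\in C^\infty_c((0,\infty))$ with $\tilde\beta\equiv 1$ on $[(1/4)^a,4^a]$ and set $K_R:=\cF^{-1}[\tilde\beta(R^{-a}\,\cdot\,)]$; then $|K_R(t)|\lc R^a(1+R^a|t|)^{-N}$ for any $N$, and $\|K_R\|_{L^r(\R)}\lc R^{a(1-1/r)}$ for $r\in[1,\infty]$. Since $\tilde\beta(R^{-a}\tau)\equiv 1$ on the time-frequency support of $F(x,\cdot)$, there holds the exact reproducing identity $F(x,t)=\int K_R(t-s)F(x,s)\,ds$. For $t\in I$ I would split this into a main piece over $|s|\le 1$ and a tail over $|s|>1$. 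By Young's inequality in $t$ with $1+1/q=1/r+1/q_0$ (so $1-1/r=1/q_0-1/q$), the main piece contributes in $L^q_t(I)$ at most $\|K_R\|_{L^r}\|F(x,\cdot)\|_{L^{q_0}([-1,1])}\lc R^{a(1/q_0-1/q)}\|F(x,\cdot)\|_{L^{q_0}([-1,1])}$; taking $L^p_x$ norms this gives $R^{a(1/q_0-1/q)}\|F\|_{L^p(L^{q_0}([-1,1]))}$. The tail uses $|K_R(t-s)|\lc R^{a(1-N)}(1+|s|)^{-N}$ for $t\in I$, $|s|>1$, producing a negative power $R^{-M}$ (for $N$ large) that can be absorbed.

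Finally, I would bound $\|F\|_{L^p(L^{q_0}([-1,1]))}$ by $\Gamma_a(\wp;p,q_0) R^{-\scr(\wp;p,q_0)}\|f\|_\wp$. A parabolic rescaling $(x,t)\mapsto(\lambda x,\lambda^a t)$ shows that the quantity obtained by replacing $I$ with any bounded interval $J$ in the definition of $\Gamma_a(\wp;p,q_0)$ is comparable to $\Gamma_a(\wp;p,q_0)$, with the ratio a constant depending only on $|J|,a,d$ and the exponents (and computable from the $T$-scaling of $\scr$). Specializing to $J=[-1,1]$, combining with the previous paragraph, and using $a(1/q_0-1/q)-\scr(\wp;p,q_0)=-\scr(\wp;p,q)$, we get $\|F\|_{L^p(L^q(I))}\lc\Gamma_a(\wp;p,q_0)R^{-\scr(\wp;p,q)}\|f\|_\wp$, and taking the supremum in $R$ concludes. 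The main delicate point is the tail estimate, which requires this rescaling-comparability in order to control $L^{q_0}$ norms of $F$ on enlarged time intervals without losing polynomial factors in $R$.
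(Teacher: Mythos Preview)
Your Bernstein-in-time idea is exactly the mechanism the paper uses, but the paper implements it more directly and avoids your main/tail split altogether. Instead of a reproducing kernel, the paper applies a Gagliardo--Nirenberg inequality \emph{on the compact interval itself}: from the fundamental theorem of calculus one gets
$\|h\|_{L^q(I)}\lc\|h\|_{L^{q_0}(I)}+\|h\|_{L^{q_0}(I)}^{1-\theta}\|h'\|_{L^{q_0}(I)}^{\theta}$ with $\theta=\tfrac1{q_0}-\tfrac1q$, and takes $h(t)=\cU_Rf(x,t)$. Since $\partial_t U^a=\im(-\Delta)^{a/2}U^a$ and $|\,\cdot\,|^a\varphi(R^{-1}|\,\cdot\,|)=R^a\tilde\varphi(R^{-1}|\,\cdot\,|)$ for another admissible cutoff $\tilde\varphi$, one has $\|\partial_t\cU_Rf\|_{L^p(L^{q_0}(I))}\lc R^a\,\Gamma_a(\wp;p,q_0)\,R^{\scr(\wp;p,q_0)}\|f\|_\wp$, and the arithmetic closes via the identity $\scr(\wp;p,q_0)+a\theta=\scr(\wp;p,q)$. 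No tails, no rescaling to longer intervals.

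Your route can be made to work but is more delicate than you indicate. First, the parabolic rescaling you invoke to pass from $I$ to $[-1,1]$ (and implicitly to dyadic intervals in the tail) only applies while the rescaled frequency $\lambda R$ stays above $1$; for tail pieces with $|s|\gtrsim R^a$ this fails and you need a separate crude polynomial bound on $\|F\|_{L^p(L^{q_0}(|s|\le T))}$, which the factor $R^{a(1-N)}$ then absorbs. Second, your displayed identity $a(1/q_0-1/q)-\scr(\wp;p,q_0)=-\scr(\wp;p,q)$ is wrong: in fact $\scr(\wp;p,q)-\scr(\wp;p,q_0)=a(1/q_0-1/q)$, not its negative. (The printed definition of $\Gamma_a$ has a sign typo; the intended meaning is $\Gamma_a=\sup_R R^{-\scr}\|\cU_R\|$, so that finiteness means $\|\cU_R\|\lc R^{\scr}$, consistent with how the quantity is used elsewhere in the appendix.) With that correction your bound reads $\|F\|_{L^p(L^q(I))}\lc R^{a\theta+\scr(q_0)}\Gamma_a(\wp;p,q_0)\|f\|_\wp=R^{\scr(q)}\Gamma_a(\wp;p,q_0)\|f\|_\wp$, which is what is needed. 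So your argument is salvageable, but the paper's interval-Sobolev argument is the shorter path.
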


\begin{proof} Let $h$ be in $C^1(-1/2,1/2)$. By the fundamental theorem
of calculus,
$$
|h(t)|^{q_0}\le |h(\tau)|^{q_0}+{q_0}\int_{-1/2}^{1/2} |h(y)|^{{q_0}-1}
|h'(y)|\,dy.
$$
for all $t,\tau\in (-1/2,1/2)$. Integrating in  $\tau\in (-1/2,1/2)$ and applying
H\"older's inequality yields
$$
\sup_{-1/2<t<1/2}|h(t)|^{q_0}\le \|h\|^{q_0}_{q_0}+q_0
\|h\|_{q_0}^{{q_0}-1}\|h'\|_{q_0}
$$
where the $L^q$ norms are on $(-1/2,1/2)$. Now, as $\|h\|^q_{q}\le \|h\|_{\infty}^{q-q_0}\|h\|_{q_0}^{q_0},$ we have
$$
\|h\|_{q} \le 2^{1/q}\Big(\|h\|_{q_0} + q_0^{\frac 1{q_0}-\frac 1q}
\|h\|_{q_0}^{1-\frac{1}{{q_0}}+\frac{1}{q}}\|h'\|_{q_0}^{\frac{1}{{q_0}}-\frac1q}\Big)\,.
$$
Setting $\cU_R f(x,t):=U^a[\varphi(R^{-1}|D|)f](x,t)$, for fixed $x$ we apply the displayed inequality  with
$h(t)= \cU_R f(x,t)$,
then integrate and apply H\"older's inequality in $x$ to get
$$
\|\cU_R f
\|_{L^p(L^q)}
\le 2^{\frac 1p+\frac 1q}
\Big(
\|\cU_R f\|_{L^p(L^{q_0})}+
q_0^{\frac 1{q_0}-\frac 1q}
\|\cU_R f\|_{L^p(L^{q_0})}^{\frac{1}{q_0'}+\frac{1}{q}}
\|\partial_t \cU_R f\|_{L^p(L^{q_0})}^{\frac{1}{q_0}-\frac1q}\,\Big).
$$
Now by definition $\partial_tU^a=\im(-\Delta)^{a/2}
U^a$, so that
$$\|\cU_R f\|_{L^p(L^{q_0})}
+ R^{-a}  \|\partial_t \cU_Rf\|_{L^p(L^{q_0})}
\lc\Gamma_a(\wp;p,q_0)
R^{\scr(\wp;p,q)}\|f\|_{\wp}
$$
and substituting these bounds
into  the displayed  inequality implies the assertion.
\end{proof}

\begin{proof}[Proof of Proposition
\ref{combin}]
 We can reduce to the situation where
$I=[-1/2,1/2]$ or $\varpi\in C^\infty_c((-1/2,1/2))$, by a
change of variables argument.
By Lemma
\ref{bourg0} we may assume $q_0=q$.

To  prove \eqref{dyadSchr} we
 apply Theorem~\ref{dyadicpieces}.
Let $\rho$ be a radial $C^\infty(\R^d)$-function which is compactly supported in the
 ball of  radius $1/2$ centered at $0$, with the property that $\widehat \rho$
is positive on $\supp \chi(|\cdot|)$. Denote by $R_k$ the operator of convolution with $2^{kd}\rho (2^k\,\cdot\,)$.
Let $L_k=  \varphi(2^{-k}|D|)$ where
$\varphi$ is chosen so that $\varphi(|\cdot|)\widehat \rho  =1$ on $\supp
\chi(|\cdot|)$.
Thus $R_kL_k P_k=P_k$.

Now let $\sB= L^q[-1/2,1/2]$
and let
$T_k f(x,t)= 2^{k(a-1)d(\frac 1{\wp_0}-\frac 1{p_0})}
2^{-ka(\frac d2-\frac 1q)} L_kU^a f(x,t)$.
Then the hypothesis that $\Gamma_a(\wp_0;p_0,q_0)$ is finite
implies
$$A:=\sup_{k>0} 2^{k ad/{p_0}}\|T_k\|_{L^{\wp_0}\to L^{p_0}(\sB)}<\infty.$$
For fixed $t$ let $h_k^t$ be the convolution kernel for $T_k$ (at fixed time $t$); it can be written as
\begin{equation*}
h^t_k(x)=
2^{k(1-a)d(\frac 1{\wp_0}-\frac 1{p_0})}
2^{-ka(\frac d2-\frac 1q)} 2^{kd} (2\pi)^{-d} \int_{\bbR^d} \varphi(|\xi|)
e^{\im(2^k \inn x\xi+2^{a k}t|\xi|^a)} d\xi.
\end{equation*}
An $N$-fold integration by parts yields
\begin{equation*}
2^{k(a-1)d(\frac 1{\wp_0}-\frac 1{p_0})} 2^{ka(\frac d2-\frac 1q)}
|h^t_k(x)|\le C_{N} 2^{k(d-N)}|x|^{-N},\quad |x|\ge  2^{(a-1)k+4},
\quad t\in [0,1]\,,
\end{equation*}
and thus condition  \eqref{smallness} is satisfied with $C_1= 2^5$.
 Thus, by Theorem
\ref{dyadicpieces}  we obtain the inequality \Be\label{Schrvectval}
\Big\|\,\Big( \sum_{k>0} 2^{k a\frac dp r} \|R_k{T}_{k}
f_{k}\|_{L^q[-1/2,1/2]}^r\Big)^{1/r}
 \,\Big\|_{p} \lc
\Big(\sum_{k>0} \|
f_{k}\|_{\wp}^p\Big)^{1/p}\,.
\Ee
Notice that, in view of $1/\wp_0-1/p_0=1/\wp-1/p$,
$$2^{kad/p} T_k= 2^{-k\scr(\wp;p,q)} L_kU.$$
Thus if we apply \eqref{Schrvectval}
with
$f_k=2^{k\scr(\wp;p,q)} P_k f$,
 then \eqref{dyadSchr}
follows.
 The assertion \eqref{dyadSchrbmo} is obtained in the same way.

We now  need  to show how to obtain   \eqref{besovmixednormest}
 from \eqref{dyadSchr}.
The right hand side in \eqref{dyadSchr} is just the $B^\wp_{s,p}$-norm of $f$.
We also have for $1<q,p<\infty$
\Be\label{LiPa}
\Big\|\Big(\int_I |G(\cdot,t)|^q dt\Big)^{1/q}\Big\|_p
\lc
\Big\|\Big(\int_I
\Big[\Big(\sum_{k\ge 0} |P_k G(\cdot,t)|^2\Big)^{1/2}\Big]^q
dt\Big)^{1/q}\Big\|_p\,
\Ee
which we apply for $G(x,t)=Uf(x,t)$.
For $p=q$ this
 is just a consequence of  the standard Littlewood-Paley
inequality (after interchanging the $x$ and the $t$ integral).
By Calder\'on-Zygmund theory the estimate also holds for $1<p<q$
(and is obtained by interpolation with a weak-type (1,1) estimate for
$L^q(\ell^2)$-valued functions). If we dualize a  similar reasoning
yields the case $q>p$.
Inequalities \eqref{dyadSchr},  \eqref{LiPa} easily imply \eqref{besovmixednormest}.

Now consider a  standard inhomogeneous Littlewood-Paley
decomposition $\{\cP_k\}_{k=0}^\infty$ on $L^p(\bbR^d)$   so that $\cP_k=P_k$ for $k>0$  and where $\cP_0$ localizes to frequencies with $|\xi|\le 2$.
For the  estimation of $\cP_0 U^a$  standard multiplier arguments
 apply.
We also   need to consider a similar inhomogeneous
Littlewood-Paley decomposition in the $t$ variable, which we denote by
$\{\cL_j\}_{j=0}^\infty$.
Then inequality \eqref{besovschr} can be rewritten as
\Be \label{besov-vv}
\Big\| \Big(\sum_{j=0}^\infty 2^{j\gamma r}
\big\|\cL_j [\varpi U^a\! f]\big\|_{L^q(\bbR)}^r\Big)^{1/r}
\Big\|_{L^p(\bbR^d)} \lc
\Big(\sum_k 2^{k(s+a\gamma)p}\big\|\cP_k f\big\|_{L^\wp(\bbR^d)}^p\Big)^{1/p}.
\Ee
We claim that there is a constant $M$ for which
\Be\label{junk}
\big\|\cL_j [\varpi U^a \cP_k g]
\big\|_{L^p(L^q)}
\le C_N \min \{2^{-jN}, 2^{-ka N}\} \|g\|_\wp \quad \text{ whenever } \ |ka-j|\ge M,
\Ee
so that for the essential terms $k$ and $j$ are coupled via
$|ka-j|\le M$.
This would mean that a $t$ derivative of order $\alpha$
could be traded with an $x$ derivative of order $a\alpha$, so that \eqref{besov-vv} would follow from \eqref{Schrvectval}.
Thus it remains to prove \eqref{junk}.
Note that for $k>0$, $j>0$, the convolution kernel of $g\to\cL_j [\varpi U^a \cP_k g](\,\cdot\,,t)$ can be written as
\begin{equation*}
\frac{1}{(2\pi)^{d+1}} \iint  \Big\{\int \varpi(s)\,e^{\im s
(|\xi|^a-\tau)} \chi_1(2^{-j}|\tau|) \chi_1(2^{-k}|\xi|) \, ds\Big\}
\,  e^{\im(\inn x\xi +t\tau)} d\tau\, d\xi
\end{equation*}
and similar formulas hold if either $k=0$ or $j=0$.
One checks that if $|ka-j|\gg 1$ then for $\xi$ and $\tau$ in the support of the indicated cutoff functions the inequality
$||\xi|^a-\tau| \ge c\max \{|\xi|^a, |\tau|\}$
holds. We perform $N+d+1$
integration by parts in $s$. For $t$ large, we  follow this
 by integrations by parts in $\tau$. This easily
yields \eqref{junk}.

The final assertions of the proposition
are a consequence of the fact that for $r\le 1$ the space
$B^q_{1/q, r}(\bbR)$ is imbedded in the space of bounded continuous functions.
\end{proof}

\end{document}